\documentclass{amsart}
\usepackage{amsmath}
\usepackage{amsthm}
\usepackage{amssymb}
\usepackage{latexsym}               
\usepackage{amsgen}
\usepackage[mathscr]{eucal}
\numberwithin{equation}{section}
\usepackage[dvipdfm, usenames]{color} 

\newcommand{\al}{\alpha}

\newcommand{\e}{\epsilon}

\newcommand{\ti}{\tilde}

\makeatother

\newtheorem{lem}{Lemma}[section]
\newtheorem{thm}[lem]{Theorem}
\newtheorem{pro}[lem]{Proposition}
\newtheorem{cor}[lem]{Corollary}

\newtheorem{remark}{Remark}

\begin{document}

\title
{Inviscid Limit Problem for radially symmetric stationary states of viscous and heat-conductive ideal gas}
\author{Itsuko Hashimoto}
\email{hashimoto@math.sci.osaka-u.ac.jp}

\noindent
\address{Department of Mathematics, Graduate School of Science,
The University of Osaka, 1-1 Machikaneyama-cho, Toyonaka,
Osaka 560-0043, Japan}

\noindent
\address{Osaka Central Advanced Mathematical Institute (OCAMI),
Osaka Metropolitan University, 3-3-138 Sugimoto, Sumiyoshi-ku,
Osaka 558-8585, Japan}

\thanks{
\noindent
{\bf Keywords:} compressible Navier-Stokes equation; 
stationary solution; radially symmetric, inviscid limit problem \\
\noindent
{{\bf AMS subject classifications.} Primary 35Q30; Secondary 76N17}}

\date{}
\maketitle

\def\cal#1{{\fam2#1}}



\begin{abstract} 

We study the zero-viscosity limit of radially symmetric stationary
solutions for the inflow and outflow problems for a viscous and heat-conductive polytropic
ideal gas in the exterior of a ball in $\mathbb{R}^n$ $(n\geq 3)$,
under prescribed boundary and far-field conditions.
The far-field velocity is assumed to be zero, and the solutions are
considered in a sufficiently small neighborhood of the far-field state. 
In the zero-viscosity limit, the viscous terms disappear from the
momentum equation, while the heat-conduction term remains in the
energy equation.
For the outflow problem, we prove that the viscous stationary solution
converges uniformly to the corresponding stationary solution of the
heat-conductive inviscid system, with convergence rate $O(\mu)$.
For the inflow problem, a boundary layer appears near the inner
boundary.
We show that the viscous stationary solution is uniformly approximated
by the superposition of the limiting stationary solution and a
boundary-layer profile. 
Here, the boundary layer appears only for the density and 
velocity, whereas no boundary-layer appears for the temperature. 
Moreover, for any $\alpha\in(0,1)$, the approximation error is of order
$O(\mu^\alpha)$.
%
%
\end{abstract}

\medskip


\section{Introduction and Main theorem}
\noindent
We study the inviscid limit of radially symmetric stationary flows
of a viscous and heat-conductive ideal gas in an exterior domain.
In the present model, the temperature is treated as an independent thermodynamic
variable and is coupled with the density and velocity through the equation of state
and the energy balance.
More precisely, we consider the system for a Newtonian polytropic ideal-gas  
in the exterior of a ball in $\mathbb{R}^n$ $(n\ge3)$:
\begin{eqnarray}
 \label{cns} 
 \left\{
\begin{array}{l}
\rho_t+\nabla\cdot (\rho U)=0, \\[10pt]
(\rho U)_t+\nabla\cdot(\rho U\otimes U)+\nabla p =\nabla \cdot D,\quad
t>0,\ x \in \Omega,\\[10pt]
\left\{\rho \left(e + \frac{1}{2}|U|^2\right)\right\}_t +
\nabla\cdot \left\{\rho U \left(e + \frac{1}{2}|U|^2\right)\right\}+\nabla\cdot(pU)= \nabla\cdot (DU -q),\\[10pt]
\end{array}
 \right.\, 
\end{eqnarray} 
where
$
\Omega=\{x\in\mathbb{R}^{n}\,(n\ge 3);\, |x|>r_0\}
$ 
for a fixed $r_0>0$,   
the unknown functions $\rho=\rho(t,x)>0$,
$U={}^t(u_1(t,x),\ldots,u_n(t,x))$,
and $\theta=\theta(t,x)>0$ denote the mass density, the fluid velocity,
and the absolute temperature, respectively.
We adopt the constitutive relations for an ideal polytropic gas
\[
e=c_V\theta,\qquad p=R\rho\theta,
\]
where $c_V>0$ is the specific heat at constant volume and $R>0$ is the gas constant. 
The viscous stress tensor $D$, the strain tensor $\Theta$, and the heat flux $q$ are given by
\begin{align}
D= 2\nu\,\Theta+\lambda\,(\nabla\cdot U)I,\quad \Theta= \frac{1}{2}\{\nabla\otimes U + {}^t(\nabla\otimes U)\},\quad q=-\kappa \nabla\theta. 
\end{align}
The constants $\nu$ and $\lambda$ denote the shear and second viscosity
coefficients, respectively, while $\kappa$ is the thermal conductivity.
Throughout the paper, we assume
\begin{align}
\label{visc}
\nu>0, \quad 2\nu+ n\lambda \ge 0, \quad \kappa>0.
\end{align}
We restrict our attention to radially symmetric flows.
Accordingly, we look for solutions of the form
\begin{align}
\label{t}
\rho(t,x)=\rho(t,r), \quad U(t,x)=\frac{x}{r}\,u(t, r),\quad \theta(t,x)=\theta(t,r), \quad r= |x|,
\end{align}
where $u=u(t,r)$ is the radial component of the velocity.
Under this symmetry assumption, the system (\ref{cns}) reduces to
the following equations for $(\rho,u,\theta)$:
\begin{equation}
\label{nr}
\left\{\begin{array}{l}
\rho_t + \dfrac{ (r^{n-1}\rho u)_r}{r^{n-1}}=0, \\[10pt]
(\rho u)_t
+ \dfrac{ (r^{n-1}\rho u^2)_r}{r^{n-1}}+ p_r =\mu \Big(\dfrac{(r^{n-1}u)_r}{r^{n-1}}\Big)_r, \\[10pt]
\left\{\rho \left(c_V\theta + \frac{1}{2}u^2\right)\right\}_t +
\dfrac{\left\{r^{n-1}\rho u \left(c_V\theta + \frac{1}{2}u^2\right)\right\}_r}{r^{n-1}}
+\dfrac{ (r^{n-1}pu)_r}{r^{n-1}}
\\
\hspace{3cm}= \kappa\dfrac{(r^{n-1}\theta_r)_r}{r^{n-1}} +
\mu \Big(\dfrac{(r^{n-1}u)_r}{r^{n-1}}\Big)_ru +\Psi, \quad \ \ t>0,\ r>r_0,
\\[10pt]
\Psi := 2\nu u_r^2 +2\nu(n-1)(\dfrac{u}{r})^2+\lambda(u_r+\dfrac{n-1}{r}u)^2,
\end{array}
 \right.\,
\end{equation}
where 
\begin{align}
\label{viscondi0}
\mu := 2\nu + \lambda>0,
\end{align} 
and $\Psi$ represents the viscous dissipation.
Under the condition (\ref{visc}), it is a non-negative quadratic form
in $u_r$ and $u/r$. 
Thus, the energy equation involves both heat conduction and viscous heating. 
Moreover, (\ref{visc}) and (\ref{viscondi0}) imply
\begin{align}
\label{mutonu}
\frac{\nu}{\mu}\le \frac{n}{2(n-1)} \le 1.
\end{align}
To formulate the stationary problem and to specify the boundary data
associated with inflow and outflow, we first recall the corresponding
time-dependent initial-boundary value problem.
We prescribe the initial data
\begin{equation}
(\rho, u, \theta)(0,r)=(\rho_0, u_0, \theta_0)(r),\quad r >r_0, 
\end{equation}
\noindent
together with the far-field condition
\begin{equation}
\label{fc}
\displaystyle{\lim_{r \to \infty}}(\rho, u, \theta)(t,r)=(\rho_+,u_+,\theta_+),\quad t>0.
\end{equation}
At the inner boundary $r=r_0$, the prescribed data depend on the direction
of the flow. We impose
\begin{equation}
\label{bc}
 \left\{
\begin{array}{ll}
(\rho, u, \theta)(t,r_0) =(\rho_b, u_b, \theta_b),\quad t>0,& (u_b>0),\\[10pt]
(u, \theta)(t,r_0) = (u_b, \theta_b),\quad t>0,& (u_b\le 0).
\end{array}
 \right.
\end{equation}
Here $\rho_b,\rho_+,\theta_b,\theta_+>0$, $u_b$ and $u_+$ are given
constants.
According to the sign of $u_b$, we refer to the cases $u_b>0$, $u_b=0$,
and $u_b<0$ as the inflow problem, the impermeable-wall problem,
and the outflow problem, respectively.
Notice that in the inflow case the density, velocity, and temperature are
all prescribed at the boundary, whereas in the outflow case only the
velocity and temperature are imposed.
This difference in the boundary data will play an important role in the
inviscid-limit analysis below. 

The existence of stationary solutions corresponding to these boundary
conditions was established in our previous work \cite{H-M1}.
More precisely, for both the inflow and outflow problems, a unique
radially symmetric stationary solution was constructed in a sufficiently
small neighborhood of the far-field state.
For the outflow problem, the asymptotic stability of such stationary
solutions under small perturbations was recently proved in
\cite{NSH}.

We now formulate the stationary problem that will be studied throughout
the present paper.
Since a stationary solution is independent of $t$, the system
(\ref{nr}) together with the far-field and boundary conditions reduces to
%
\begin{equation}
\label{nrs0}
\left\{\begin{array}{l}
\dfrac{ (r^{n-1}\rho u)_r}{r^{n-1}}=0, \\[10pt]
\dfrac{ (r^{n-1}\rho u^2)_r}{r^{n-1}}+ p_r =\mu \Big(\dfrac{(r^{n-1}u)_r}{r^{n-1}}\Big)_r,
\\[10pt]
\dfrac{\left\{r^{n-1}\rho u \left(e + \frac{1}{2}u^2\right)\right\}_r}{r^{n-1}}
+\dfrac{ (r^{n-1}pu)_r}{r^{n-1}}\\[10pt]
\hspace{4cm}\!\!= \kappa\dfrac{(r^{n-1}\theta_r)_r}{r^{n-1}} +
\mu \Big(\dfrac{(r^{n-1}u)_r}{r^{n-1}}\Big)_ru +\Psi,\quad r\ge r_0,
\\[10pt]
p= R\rho\theta, \quad e=c_V \theta,\\
\Psi := 2\nu u_r^2 +2\nu(n-1)(\dfrac{u}{r})^2+\lambda(u_r+\dfrac{n-1}{r}u)^2,\\[10pt]
\displaystyle{\lim_{r\to \infty}} (\rho, u, \theta)(r) =(\rho_+, u_+, \theta_+), \\[2mm]
(\rho, u, \theta)(r_0)=(\rho_b, u_b, \theta_b)\ \ (u_b>0), \quad (u, \theta)(r_0)=(u_b, \theta_b) \ \ (u_b \le 0).
\end{array}
 \right.\,
\end{equation}
An important consequence of the stationary mass conservation law is
that the radial mass flux is constant.
Indeed, the first equation of (\ref{nrs0}) yields
\begin{align}
\label{n1}
r^{n-1}\rho(r) u(r)=\epsilon, \qquad r\ge r_0, 
\end{align}
for some constant $\epsilon$.
Its value is determined at the boundary according to the direction of
the flow:
\begin{equation}
\label{Nep}
\epsilon = r^{n-1}_0\rho_b u_b\quad(u_b>0),\qquad 
\epsilon = r^{n-1}_0\rho (r_0) u_b\quad(u_b\le0).
\end{equation}
This conservation law also restricts the admissible far-field velocity.
Since $\rho(r)\to\rho_+>0$ as $r\to\infty$, (\ref{n1}) gives, for
$n\ge2$,
\begin{align*}
u_+=\lim_{r\to \infty}u(r)= \lim_{r\to \infty}\frac{\epsilon}{r^{n-1}\rho(r)}=0.
\end{align*}
Thus, the zero far-field velocity is not an additional restriction but
is forced by radial mass conservation for stationary flows in the
exterior domain. Accordingly, we assume $u_+=0$ throughout the paper.
 

We next recall the existence and spatial decay properties of the
stationary solutions established in \cite{H-M1}, which form the starting
point for the inviscid-limit analysis developed below.
\noindent
\begin{thm}\rm{(\cite{H-M1})}
\label{mt}
\quad
Let $n \ge 3$ and $u_+=0$. 
Then, for any $\rho_+>0$ and $\theta_+>0$,
there exist positive constants $\epsilon_0$ and $C$ satisfying the following:

\medskip

\noindent
{\rm (I)} \ Let $u_b>0$. If $|u_b|+|\rho_b-\rho_+|+|\theta_b-\theta_+| \le \epsilon_0$, 
there exists a unique smooth solution $(\rho,u,\theta)$ of 
the problem {\rm (\ref{nrs0})} satisfying 
\begin{align*}
\begin{aligned}
  & |\rho(r)-\rho_+|,\ |\theta(r)-\theta_+|\le C r^{-(n-2)}(|u_b|+|\rho_b-\rho_+|+|\theta_b-\theta_+|),\\[5pt]
  & C^{-1}r^{-(n-1)}|u_b| \le |u(r)| \le C r^{-(n-1)}|u_b|, \qquad r\ge r_0.
\end{aligned}
\end{align*}\\
{\rm (II)} \ Let $u_b\le 0$. If $|u_b|+|\theta_b-\theta_+|\le \epsilon_0$, 
there exists a unique smooth solution $(\rho,u,\theta)$ 
of the problem {\rm (\ref{nrs0})} satisfying
\begin{align*}
\begin{aligned}
  & |\rho(r)-\rho_+|,\ |\theta(r)-\theta_+|\le C r^{-(n-2)}(|u_b|+|\theta_b-\theta_+|), \\[5pt]
  & C^{-1}r^{-(n-1)}|u_b| \le |u(r)|\le C r^{-(n-1)}|u_b|, \qquad r\ge r_0.
\end{aligned}
\end{align*}
\end{thm}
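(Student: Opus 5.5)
We reduce (\ref{nrs0}) to a first-order system and apply a contraction argument in weighted spaces. The mass flux $\epsilon$ in (\ref{n1}) satisfies $|\epsilon|\le C|u_b|$. In the outflow case $\epsilon$ depends on the unknown $\rho(r_0)$, so we treat it as an extra unknown and fix it by a consistency condition. Since $u=\epsilon r^{-(n-1)}/\rho$, the velocity is slaved to the density.

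We integrate the momentum equation once, from $r$ to $\infty$. The convective term $(r^{n-1}\rho u^2)_r/r^{n-1}$ equals $\epsilon u_r$ up to a lower-order remainder, and this remainder decays like $\epsilon^2 r^{-2n+1}$. We therefore obtain a first-order relation
\[
\mu\frac{(r^{n-1}u)_r}{r^{n-1}} = R(\rho\theta-\rho_+\theta_+) + \epsilon u + \int_r^\infty O(\epsilon^2 s^{-2n+1})\,ds .
\]
Substituting $u=\epsilon/(r^{n-1}\rho)$ turns the left side into $-\mu\epsilon\rho_r/(r^{n-1}\rho^2)$. The result is an ODE for $\rho$ whose coefficient $\mu\epsilon$ is small, coupled to $\theta$.

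We integrate the energy equation in flux form: $r^{n-1}\rho u(e+u^2/2)+r^{n-1}pu-\kappa r^{n-1}\theta_r-\mu r^{n-1}u\,(r^{n-1}u)_r/r^{n-1}$ has derivative equal to $r^{n-1}$ times a dissipation remainder $O(\epsilon^2 r^{-2n})$. This gives
\[
\kappa r^{n-1}\theta_r=\epsilon(c_V+R)(\theta-\theta_+)+O(\epsilon^2 r^{-(n-1)})+\text{const}.
\]
The leading part $(r^{n-1}\theta_r)_r\approx0$ has the harmonic solution $\theta_+ + (\theta_b-\theta_+)(r/r_0)^{-(n-2)}$. This is the source of the $r^{-(n-2)}$ rate, and it is where $n\ge3$ is needed.

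We set up the iteration on $(\tilde\rho,\tilde\theta)=(\rho-\rho_+,\theta-\theta_+)$ in the norm $\sup_r r^{n-2}(|\tilde\rho|+|\tilde\theta|)$. The linearized $\theta$ equation is solved explicitly by the variation-of-constants formula with the weight $r^{-(n-2)}$. For the $\rho$ equation, $\rho_r = -\frac{r^{n-1}\rho^2}{\mu\epsilon}\bigl(R(\rho\theta-\rho_+\theta_+)+\dots\bigr)$. In the inflow case ($\epsilon>0$) we write $\rho\theta-\rho_+\theta_+\approx\theta_+\tilde\rho+\rho_+\tilde\theta$ and solve for $\tilde\rho$ with the integrating factor $\exp\bigl(-\int_{r_0}^r \frac{R\theta_+\rho_+^2 s^{n-1}}{\mu\epsilon}ds\bigr)$. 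Because this factor decays, prescribing $\rho(r_0)=\rho_b$ is admissible, and $\tilde\rho$ equals $-(\rho_+/\theta_+)\tilde\theta$ plus a rapidly decaying layer. The outflow sign $\epsilon\le0$ makes the homogeneous mode grow, so no boundary value for $\rho$ can be imposed. Instead $\tilde\rho$ is the unique bounded solution, $\tilde\rho=-\int_r^\infty(\cdots)$ with a decaying kernel, and $\rho(r_0)$ and hence $\epsilon$ are then fixed by a scalar fixed-point in $\epsilon$. In both cases the bound $|\tilde\rho|\le Cr^{-(n-2)}\delta$ follows because $\tilde\rho$ tracks $\tilde\theta$. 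The bounds on $u$ come directly from $u=\epsilon/(r^{n-1}\rho)$ together with $C^{-1}|u_b|\le|\epsilon|\le C|u_b|$. The case $u_b=0$ gives $\epsilon=0$, $u\equiv0$, and a direct computation.

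The main obstacle is uniformity of the contraction in the small parameter $\epsilon$, since the $\rho$-equation is singular ($1/(\mu\epsilon)$). We would handle it by treating the $\rho$ equation as the algebraic relation $\rho\theta=\rho_+\theta_+ + O(\mu\epsilon\rho_r r^{-(n-1)})$ with a small correction, and estimating the correction with the integrating-factor kernel, whose $L^1$ norm is $O(\mu\epsilon r^{-(n-1)})$. This makes the Lipschitz constant $O(\epsilon_0)$. Uniqueness follows from the same contraction estimate applied to the difference of two solutions.
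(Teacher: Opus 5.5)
Your proposal is correct and follows essentially the same route as the paper's reformulation and the iteration of the cited existence paper. The constant mass flux slaves $u$ to the density, and one integration of the momentum and energy equations gives a singular first-order ODE for the density (the paper uses $\eta=1/\rho-v_+$ rather than $\rho-\rho_+$), coupled to a first-order ODE for $\theta$ whose integration constant is fixed by $\theta(r_0)=\theta_b$. The system is then closed by iteration/contraction in the $r^{n-2}$-weighted sup norm, using the integrating-factor kernel bound of Lemma~\ref{hyokaG}, integrating from $r_0$ for inflow and from $\infty$ for outflow with $\epsilon=u_b/v(1)$ determined self-consistently.

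One point to tighten is the energy equation. The viscous-work and dissipation terms there contain $\rho_r$, which your weighted sup norm does not control and which is of size $|\rho_b-\rho^E(r_0)|/(\mu\epsilon)$ in the inflow layer. So you should not bound them pointwise by $O(\epsilon^2r^{-(n-1)})$ and $O(\epsilon^2r^{-2n})$. Instead, use that $r^{n-1}\bigl(\Psi-\mu(u_r+\frac{n-1}{r}u)^2\bigr)=-2\nu(n-1)(r^{n-2}u^2)_r$ integrates exactly, and express $\mu\epsilon v_r/r^{n-1}$ through the integrated momentum equation. This is what \eqref{reforme} does, where the $\eta_r$ in $H_0$ is given by the first equation.
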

\noindent
As observed in \cite{H-M1}, when the viscosity parameter is restricted
to a bounded interval, say $\mu\in(0,1]$, the constants $\epsilon_0$
and $C$ in Theorem \ref{mt} can be chosen independently of $\mu$.
Moreover, the constant $C$ can be taken independently of the smallness
constant $\epsilon_0$.
These uniform estimates will be used throughout the zero-viscosity
analysis below.

Our main purpose is to determine the behavior of these stationary
solutions as the viscosity tends to zero.
A distinctive feature of the present heat-conductive model is that,
while the viscous terms disappear in the limit $\mu\to0$, the heat
conduction remains present.
Accordingly, the limiting problem consists of the inviscid
mass and momentum equations coupled with a heat-conductive energy
equation:
\begin{equation}
\label{Es}
\left\{\begin{array}{l}
\dfrac{ (r^{n-1}\rho^E u^E)_r}{r^{n-1}}=0, \\[10pt]
\dfrac{ (r^{n-1}\rho^E (u^E)^2)_r}{r^{n-1}}+ p^E_r =0,\\[10pt]
\dfrac{\left\{r^{n-1}\rho^E u^E \left(e^E + \frac{1}{2}(u^E)^2\right)\right\}_r}{r^{n-1}}
+\dfrac{ (r^{n-1}p^Eu^E)_r}{r^{n-1}}= \kappa\dfrac{(r^{n-1}\theta^E_r)_r}{r^{n-1}},\quad r\ge r_0,\\[10pt]
\displaystyle{\lim_{r\to \infty}} (\rho^{E}, u^{E}, \theta^{E})(r) =(\rho_+, u_+, \theta_+),\qquad \theta^E(r_0)=\theta_b,
\\[10pt]
p^E(\rho^E, \theta^E):= R\rho^E\theta^E, \quad e^E:=c_V \theta^E.
\end{array}
 \right.\,
\end{equation}
In addition to the temperature condition
$\theta^E(r_0)=\theta_b$, an appropriate boundary condition for
the pair $(\rho^E,u^E)$ must be selected according to the direction
of the flow.
The reason is that, in the subsonic regime considered here, the
mass-momentum part of the limiting system admits only one boundary
condition at $r=r_0$.
For the outflow problem, the natural choice is
\[
u^E(r_0)=u_b.
\]
We shall prove that, with this boundary condition, the viscous
stationary solution converges uniformly to the corresponding solution
of the limiting system as $\mu\to0$.
In particular, no boundary-layer correction is required in the
outflow case.

The inflow problem has a different structure.
In this case, the boundary data for the viscous problem prescribe both
the density and the velocity at $r=r_0$, whereas only one boundary
condition can be imposed on the mass-momentum part of the limiting
system.
This mismatch gives rise to a boundary layer near the inner boundary.

The stationary mass conservation law provides a natural way to select
the boundary condition for the limiting flow.
Indeed, both the viscous and limiting solutions satisfy a constant
radial mass-flux relation.
Since the viscous mass flux is fixed by the inflow data as
$r_0^{\,n-1}\rho_bu_b$, we impose
\[
(\rho^E u^E)(r_0)=\rho_bu_b
\]
on the limiting system.

The remaining discrepancy between the viscous boundary data and the
boundary value of the limiting solution is described by a boundary
layer.
To identify its leading-order profile, we introduce the stretched
variable
\[
y:=\frac{r-r_0}{\mu}
\]
and rewrite the stationary equations in terms of $y$.
Formally passing to the leading-order equations as $\mu\to0$ yields
the boundary-layer problem

%
\begin{align}
\label{Ebl} 
\left\{
\begin{aligned}
  &\ ({\rho}^B{u}^B)_y = 0,   \\
  &\ ({\rho}^B (u^B)^2)_y+p^B_y={u}^B_{yy}, \qquad y>0,\\[2mm]
  &\  \kappa \theta^B_{yy}=0,\\
  &\ \displaystyle{\lim_{y\to \infty}} ({\rho}^B,{u}^B, \theta^B)(y) =(\rho^{E}(r_0), u^{E}(r_0), \theta_b), \\[2mm]
  & ({\rho}^B,{u}^B, \theta^B)(0)=(\rho_b, u_b, \theta_b),
\end{aligned}
 \right.\,
\end{align}
where $p^B=R\rho^B\theta^B$. An important feature of this boundary-layer system is that the
temperature profile is constant:
\[
\theta^B(y)\equiv\theta_b.
\]
Thus, the nontrivial boundary-layer corrections occur only in the
density and velocity, while the temperature is already matched through
the heat-conductive limiting problem.

We shall show that, in the inflow case, the viscous stationary solution
is uniformly approximated by the superposition of the limiting
stationary solution and this boundary-layer profile.

The inviscid limit of the compressible Navier--Stokes equations has been
studied extensively.
We refer, for example, to \cite{E, Hu, HL,S,WW,XY} and the references therein. However, there exist no results on the inflow and outflow problems for the radially symmetric stationary solutions in exterior domains, except our paper \cite{H-M}.  
Our previous work \cite{H-M} treated the corresponding inviscid problems for a 
barotropic viscous gas. 
The present paper extends that analysis to a viscous and heat-conductive
polytropic ideal gas.  

%
%
%

This extension changes the structure of the limiting problem in an
essential way.
While the viscous terms disappear as $\mu\to0$, the heat-conduction
term remains in the energy equation.
For the inflow problem, the mismatch in the mass-momentum boundary data
produces nontrivial boundary-layer corrections in the density and
velocity, whereas no boundary layer appears for the temperature. 
Thus, the limiting profile consists of the stationary solution of the
heat-conductive inviscid system together with boundary-layer corrections
only in the density and velocity. 
We are now ready to state the main results.
We first consider the outflow problem.

\begin{thm}\label{etE}
Let $n\geq 3$, $u_b\leq 0$, and $u_+=0$.
Then, for any $\rho_+>0$ and $\theta_+>0$, there exist positive
constants $\epsilon_0$ and $C$, independent of $\mu\in(0,1]$,
such that the following statements hold.

\smallskip
\noindent
{\rm (I)}
If $
 |u_b|+|\theta_b-\theta_+|\leq\epsilon_0,
$ 
then there exists a unique smooth solution
$(\rho^E,u^E,\theta^E)$ of the problem (\ref{Es})
with the boundary conditions 
$
 u^E(r_0)=u_b,\ \theta^E(r_0)=\theta_b,
$ 
satisfying
\[
 |\rho^E(r)-\rho_+|,\ |\theta^E(r)-\theta_+|
 \leq
 Cr^{-(n-2)}
 \bigl(|u_b|+|\theta_b-\theta_+|\bigr),
\]
and
\[
 C^{-1}r^{-(n-1)}|u_b|
 \leq |u^E(r)|
 \leq Cr^{-(n-1)}|u_b|,
 \qquad r\geq r_0.
\]

\bigskip
\noindent
{\rm (II)}
If 
$
 |u_b|+|\theta_b-\theta_+|\leq\epsilon_0,
$ 
then the solution $(\rho,u,\theta)$ of the problem (\ref{nrs0})
uniformly converges to the solution
$(\rho^E,u^E,\theta^E)$ of the problem (\ref{Es})
as $\mu\to0$. More precisely,
\[
 \sup_{r\geq r_0}
 \left|
 (\rho-\rho^E,u-u^E,\theta-\theta^E)(r)
 \right|
 \leq C\mu.
\]
\end{thm}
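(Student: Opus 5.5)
The plan is to reduce both the viscous problem (\ref{nrs0}) and the limiting problem (\ref{Es}) to first-order ODE systems using the conserved quantities, and then compare them by a perturbation argument in the small parameter $\mu$.

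\textbf{Reduction via conservation laws.} By (\ref{n1}) we have $\rho u=\epsilon r^{1-n}$ with $\epsilon=r_0^{n-1}\rho(r_0)u_b$ in the viscous case. For the limit problem we write $\rho^Eu^E=\epsilon^E r^{1-n}$ with $\epsilon^E=r_0^{n-1}\rho^E(r_0)u_b$. Since the stationary mass flux is constant, the energy equation can be integrated once. For (\ref{Es}) this gives
\[
\kappa r^{n-1}\theta^E_r=\epsilon^E\Bigl(c_V\theta^E+R\theta^E+\tfrac12(u^E)^2\Bigr)-\epsilon^E\Bigl(c_V\theta_++R\theta_+\Bigr).
\]
The constant is fixed by the far field: $u^E\to0$, and $r^{n-1}\theta^E_r\to0$ by the decay rate. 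The viscous energy equation integrates in the same way, with the additional flux $\mu r^{n-1}u\,\bigl(r^{1-n}(r^{n-1}u)_r\bigr)$ plus a $\nu$-correction. I will use the total stress-energy form, in which $\Psi$ combines with the work term into a divergence. The momentum equation for (\ref{Es}) reads $\epsilon^E r^{1-n}u^E_r+(R\rho^E\theta^E)_r=0$. Using $u^E=\epsilon^E/(r^{n-1}\rho^E)$, this becomes an ODE for $\rho^E$:
\[
\bigl(R\theta^E-(u^E)^2\bigr)\rho^E_r=-R\rho^E\theta^E_r+\tfrac{(n-1)\rho^E(u^E)^2}{r}.
\]
The coefficient $R\theta^E-(u^E)^2$ is bounded below because the state is subsonic and near $(\rho_+,0,\theta_+)$, so the system is non-degenerate.

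\textbf{Part (I).} We obtain a first-order system for $(\rho^E,\theta^E)$ on $[r_0,\infty)$, with data at infinity for $\rho^E$ and mixed data for $\theta^E$. The boundary condition $u^E(r_0)=u_b$ fixes $\epsilon^E=r_0^{n-1}\rho^E(r_0)u_b$, which is an implicit relation. I would solve it by a contraction mapping in the weighted space with norm $\sup r^{n-2}(|\rho-\rho_+|+|\theta-\theta_+|)$, following the scheme of \cite{H-M1} with $\mu=0$. The linearised temperature equation $\kappa(r^{n-1}\theta_r)_r=O(\epsilon)\theta_r$ has the decaying fundamental solution $r^{2-n}$, since $n\ge3$. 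The lower bound on $|u^E|$ follows from $\rho^E\approx\rho_+$.

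\textbf{Part (II).} Set $(\phi,\psi,\zeta)=(\rho-\rho^E,u-u^E,\theta-\theta^E)$. In the viscous momentum equation, the term $\mu\bigl(r^{1-n}(r^{n-1}u)_r\bigr)_r$ is a source term. It is $O(\mu r^{-n-1}|u_b|)$ provided we have $\mu$-uniform bounds on $u_r,u_{rr}$. These are the key inputs. In the outflow case, the momentum equation is not singular at the boundary, because no density condition is imposed there. I would obtain $|u_r|+|u_{rr}|\le C r^{-n}|u_b|$ uniformly in $\mu$ by writing the viscous momentum equation as
\[
\mu w_r=(R\theta-u^2)\rho_r+\dots,\qquad w=r^{1-n}(r^{n-1}u)_r.
\]
I would then use that the free parameter $\rho(r_0)$ is chosen exactly so that no exponentially growing mode appears. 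Equivalently, I would show that $\rho_r$ is bounded via an energy/maximum-principle argument uniform in $\mu$, using Theorem \ref{mt} with $\mu$-independent constants. The difference system is then linear in $(\phi,\zeta)$, with coefficients close to those of (\ref{Es}), a source of size $O(\mu)$ with decay, and matching data: $\psi(r_0)=0$, $\zeta(r_0)=0$, and zero far field. The differences of the fluxes satisfy $|\epsilon-\epsilon^E|\le C|u_b|\,|\phi(r_0)|$. A Gronwall/contraction estimate in the same weighted norm then gives $\sup(|\phi|+|\psi|+|\zeta|)\le C\mu$.

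\textbf{Main obstacle.} I expect the main obstacle to be the $\mu$-uniform derivative bounds for the viscous solution in the outflow case, i.e.\ excluding a boundary layer. The approach I would try first is to treat the viscous momentum equation as a singularly perturbed first-order equation for $w$. For outflow ($\epsilon<0$) with subsonic flow, the fast mode $\exp\bigl(\int(\epsilon/(\mu r^{n-1}\rho))\dots\bigr)$ is compatible with the data only in the decaying direction from $r_0$. I would show that the boundary layer amplitude vanishes because only $u$ is prescribed at $r_0$, and $u_b$ matches $u^E(r_0)$ exactly.
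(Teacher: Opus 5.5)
\textbf{Part (I).} Your Part (I) follows essentially the paper's argument: a contraction in the weight $\sup_r r^{n-2}|\cdot|$, with $\epsilon^E=u_b/(v_++\eta^E(r_0))$ updated along the iteration. Using an ODE for $\rho^E$ instead of the algebraic relation (\ref{reformE2}) is an equivalent variant. However, one step of your reduction is false: the constant in the once-integrated energy equation is \emph{not} fixed by the far field. Since $\theta^E-\theta_+\sim c\,r^{-(n-2)}$, one has $r^{n-1}\theta^E_r\to-(n-2)c\neq0$ in general. With your normalization, $\kappa r^{n-1}\chi^E_r=\epsilon^E c_P\chi^E+\tfrac{\epsilon^E}{2}(u^E)^2$ has only one solution vanishing at infinity, so $\theta^E(r_0)=\theta_b$ cannot be imposed. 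For $u_b=0$ it forces $\theta^E\equiv\theta_+$ instead of $\theta_++(\theta_b-\theta_+)(r/r_0)^{-(n-2)}$. The constant must be kept free, as $\alpha^E$ in (\ref{chieout}), and determined by $\theta^E(r_0)=\theta_b$ via (\ref{alphadef}). Your later remark about the fundamental solution $r^{2-n}$ suggests this is what you intended.

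\textbf{Part (II): the genuine gap.} Everything in Part (II) rests on $\mu$-uniform bounds for $u_r$ (and $u_{rr}$) of the viscous solution. Theorem \ref{mt} does not provide these, and you only sketch how to get them. This is the heart of (II), not a side issue. Theorem \ref{mt} holds with $\mu$-independent constants in the inflow case as well, where $\rho_r$ is generically of size $\mu^{-1}$ in the layer at $r_0$ (Theorem \ref{bl+E}). So no energy or maximum-principle argument built on Theorem \ref{mt} can give the bound unless it uses the sign of $\epsilon$. Until that is done, `the viscous term is an $O(\mu)$ source' is exactly what remains unproved.

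\textbf{How the paper avoids this.} The paper needs no derivative bound on the viscous solution. It compares the \emph{once-integrated} momentum equations. Equation (\ref{reforme}) reads $\epsilon\mu r^{1-n}\eta_r=\Phi(\eta,\chi;\epsilon)$, and (\ref{reformE}) reads $\Phi(\eta^E,\chi^E;\epsilon^E)=0$ with the same function $\Phi$. So the viscous term is the leading singular term rather than a source. Then $V=\eta-\eta^E$ solves (\ref{out2}), $V_r-\frac{r^{n-1}}{\epsilon\mu}a(r)V=-\eta^E_r+F[V,\Theta]$, with $a(r)/\epsilon\ge\delta/|\epsilon|$ for $\epsilon<0$. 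Duhamel's formula is taken from $r=\infty$. This is the stable direction, and it is usable precisely because $\eta(r_0)$ is not prescribed. Its kernel has $L^1$-norm $O(|\epsilon|\mu)$, so the source $\eta^E_r$, bounded by Part (I), contributes only $O(\mu)$. The viscous derivative enters the temperature comparison only through $\epsilon^2\mu\eta_r$ in $H_0$. There $V_r$ is read off from (\ref{out2}) itself:
$\mu|V_r|\le C|\epsilon|^{-1}r^{n-1}\bigl(\sup_r r^{n-2}|V|+\sup_r r^{n-2}|\Theta|\bigr)+\dots$.
After multiplication by $\epsilon^2$ this is small and absorbed.

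\textbf{Your suggested mechanism.} Your first suggestion, integrating the singular first-order equation for $w=\epsilon r^{1-n}\eta_r$ from infinity, is the same mechanism applied to $\eta_r$ rather than to $\eta-\eta^E$. It could be completed, but it must be coupled with a bound on $\theta_r$, so it is more work. Also, $u_{rr}$ is never needed: after one integration the viscous term is just $\epsilon\mu r^{1-n}\eta_r$.
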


%
%

\noindent
We next consider the inflow problem. 

\begin{thm}
\label{bl+E}
Let $n\geq 3$, $u_b>0$, and $u_+=0$.
Then, for any $\rho_+>0$ and $\theta_+>0$, there exist positive
constants $\epsilon_0$, $\nu_0$, and $C$, independent of
$\mu\in(0,1]$, such that the following statements hold.

\smallskip
\noindent
{\rm (I)}
If 
$
 |u_b|+|\rho_b-\rho_+|+|\theta_b-\theta_+|
 \leq\epsilon_0,
$ 
then there exists a unique smooth solution
$(\rho^E,u^E,\theta^E)$ of the problem (\ref{Es})
with the boundary conditions 
$
 \rho^E(r_0)u^E(r_0)=\rho_bu_b,
 \ 
 \theta^E(r_0)=\theta_b,
$ 
satisfying
\[
 |\rho^E(r)-\rho_+|,\ |\theta^E(r)-\theta_+|
 \leq
 Cr^{-(n-2)}
 \bigl(|u_b|+|\theta_b-\theta_+|\bigr),
\]
and
\[
 C^{-1}r^{-(n-1)}|u_b|
 \leq |u^E(r)|
 \leq Cr^{-(n-1)}|u_b|,
 \qquad r\geq r_0.
\]

\bigskip
\noindent
{\rm (II)}
If 
$
 |u_b|+|\rho_b-\rho_+|+|\theta_b-\theta_+|
 \leq\epsilon_0,
$ 
then there exists a unique smooth boundary-layer solution
$(\rho^B,u^B,\theta^B)$ of the problem (\ref{Ebl}) satisfying
\[
 |\rho^B(y)-\rho^E(r_0)|
 \leq
 C\exp\left(-\frac{\nu_0}{\epsilon}y\right)
 \bigl(
 |u_b|+|\rho_b-\rho_+|+|\theta_b-\theta_+|
 \bigr),
\]
\[
 |u^B(y)-u^E(r_0)|
 \leq
 C\exp\left(-\frac{\nu_0}{\epsilon}y\right)
 \bigl(
 |u_b|+|\rho_b-\rho_+|+|\theta_b-\theta_+|
 \bigr),
\]
and
\[
 \theta^B(y)\equiv\theta_b,
 \qquad y\geq0,
\]
where $\epsilon=\rho_bu_b$.

\bigskip

\noindent
{\rm (III)}
If 
$
 |u_b|+|\rho_b-\rho_+|+|\theta_b-\theta_+|
 \leq\epsilon_0,
$ 
then the solution $(\rho,u,\theta)$ of the problem (\ref{nrs0})
is uniformly approximated, as $\mu\to0$, by the superposition of
the limiting stationary solution $(\rho^E,u^E,\theta^E)$
and the boundary-layer solution $(\rho^B,u^B,\theta^B)$.
More precisely, for any $\alpha\in(0,1)$,
\[
 \sup_{r\geq r_0}
 \left|
 \rho(r)-\rho^E(r)
 -\rho^B\left(\frac{r-r_0}{\mu}\right)
 +\rho^E(r_0)
 \right|
 \leq C\mu^\alpha,
\]
\[
 \sup_{r\geq r_0}
 \left|
 u(r)-u^E(r)
 -u^B\left(\frac{r-r_0}{\mu}\right)
 +u^E(r_0)
 \right|
 \leq C\mu^\alpha,
\]
and
\[
 \sup_{r\geq r_0}
 |\theta(r)-\theta^E(r)|
 \leq C\mu^\alpha,
\]
where $C$ may also depend on $\alpha$.
\end{thm}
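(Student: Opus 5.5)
We prove the three parts of Theorem~\ref{bl+E} in order. Throughout we use the fact that the mass flux is conserved, and we treat the system as a perturbation of the far-field state. Set $\epsilon=r_0^{n-1}\rho_bu_b$; the normalization $\epsilon=\rho_bu_b$ in (II) only changes constants.

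\emph{Part (I): the limiting problem.} By the first equation of (\ref{Es}), $r^{n-1}\rho^Eu^E\equiv\epsilon$, so $u^E=\epsilon/(r^{n-1}\rho^E)$. Substituting this into the momentum equation gives
\[
\Big(R\rho^E\theta^E+\frac{\epsilon^2}{r^{2(n-1)}\rho^E}\Big)_r=-\frac{2(n-1)\epsilon^2}{r^{2n-1}\rho^E}.
\]
Subtract the energy equation times $r^{n-1}$ minus $u^E$ times the momentum identity. This leaves the reduced form
\[
(r^{n-1}\kappa\theta^E_r)_r=\epsilon\,(c_V\theta^E+\tfrac12 (u^E)^2+R\theta^E)_r ,
\]
which integrates once: $r^{n-1}\kappa\theta^E_r=\epsilon(c_p\theta^E+\frac12(u^E)^2)-\epsilon(c_p\theta_+)$, where $c_p=c_V+R$ and the constant comes from the far field.

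Since $\epsilon$ is small and the state is subsonic, the pressure relation can be solved by the implicit function theorem: $\rho^E=\rho^E(r,\theta^E)$ is determined by $\theta^E$ and $r$, with a correction of order $\epsilon^2r^{-2(n-1)}$. This turns the problem into a first-order scalar ODE for $\theta^E$ with $\theta^E(r_0)=\theta_b$ and $\theta^E\to\theta_+$. A contraction argument in the weighted space $\sup r^{n-2}|\theta-\theta_+|$, using the kernel $r^{-(n-1)}$ whose integral gives the $r^{-(n-2)}$ rate, yields existence, uniqueness and the stated decay. The same procedure handles the outflow boundary condition of Theorem~\ref{etE}. The bounds on $u^E$ then follow from $u^E=\epsilon/(r^{n-1}\rho^E)$.

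\emph{Part (II): the boundary layer.} From $\theta^B_{yy}=0$, boundedness and the boundary data, we get $\theta^B\equiv\theta_b$. Then $\rho^Bu^B=\epsilon$, and integrating the momentum equation from $\infty$ gives the scalar ODE
\[
u^B_y=\epsilon u^B+\frac{R\theta_b\epsilon}{u^B}-\Big(\epsilon u_*+\frac{R\theta_b\epsilon}{u_*}\Big)=:F(u^B),\qquad u_*=u^E(r_0).
\]
Here $F'(u_*)=\epsilon(1-R\theta_b/u_*^2)\approx-\epsilon R\theta_b/u_*^2$, which is strongly negative because the flow is subsonic. So $u_*$ is a stable equilibrium with decay rate of size $\epsilon/u_*^2\ge \nu_0/\epsilon\cdot(\text{const})$.

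One point needs checking: $u_b$ and $u_*$ must lie in the same monotone branch, so that the orbit starting at $u_b$ reaches $u_*$. This holds because $u_b/u_*=\rho^E(r_0)/\rho_b$ is close to $1$, given that $\rho_b$ and $\rho^E(r_0)$ are both near $\rho_+$. A phase-line argument then gives the exponential estimate, and $\rho^B=\epsilon/u^B$ inherits it.

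\emph{Part (III): the error estimate.} This is the main obstacle. We form the approximate solution $(\tilde\rho,\tilde u,\tilde\theta)=(\rho^E+\rho^B-\rho^E(r_0),\,u^E+u^B-u^E(r_0),\,\theta^E)$. We then compute the residuals in the reduced first-order system, namely the integrated momentum equation $\mu(r^{n-1}u)_r/r^{n-1}=\epsilon u/r^{n-1}+p-C$ together with the integrated energy equation.

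Two residuals arise:
\begin{itemize}
\item The term $\mu u^E_{rr}$ gives a residual of size $O(\mu)$.
\item The interaction terms, such as $(r^{-(n-1)}-r_0^{-(n-1)})$ multiplied by the layer, give residuals of size $|r-r_0|e^{-cy}=\mu\,y\,e^{-cy}=O(\mu)$.
\end{itemize}

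Writing $w=u-\tilde u$ and $\vartheta=\theta-\tilde\theta$, we obtain a system of the form
\[
\mu w_r=-a(r)w+b\,\vartheta+O(\mu),\qquad a\approx \epsilon R\theta/(r^{n-1}u^2)>0 .
\]
The difficulty is that $a(r)$ degenerates: it behaves like $r^{n-1}$ times small data, so it is not uniformly positive. The equation is also coupled to the parabolic-type temperature equation, and the temperature error is not controlled by a maximum principle pointwise in the layer.

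We proceed as follows. First, we estimate $\vartheta$ from the integrated energy equation by a comparison/Gronwall argument, using the $O(\mu)$ bounds on $\mu u_r$ together with the uniform bounds of Theorem~\ref{mt}. Second, we solve the $w$-equation by a variation-of-constants formula with kernel $\exp(-\mu^{-1}\int a)$. The loss from $\mu$ to $\mu^\alpha$ would come from splitting $[r_0,\infty)$ at $r=r_0+\mu^{1-\alpha}$-type scales, that is, from the near-boundary region where the layer derivative $\mu^{-1}e^{-cy}$ enters the energy coupling through $\Psi$ and $\mu u_{rr}u$. On that region we only obtain bounds like $\mu\log(1/\mu)$ or $\mu^\alpha$, with constants depending on $\alpha$.

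Finally, the density error follows from $\rho=\epsilon/(r^{n-1}u)$ together with the corresponding relations for $\rho^E$ and $\rho^B$, using $u\ge C^{-1}r^{-(n-1)}u_b$.
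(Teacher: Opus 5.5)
\textbf{Part (III).} Your residual bookkeeping in Part (III) is reasonable, but this is where the theorem is actually proved, and what you give is a plan rather than an argument. Several of its premises are also wrong.

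First, the coefficient does not degenerate. Linearizing $R\epsilon\theta/(r^{n-1}u)$ in $u$ with $u\approx\epsilon v/r^{n-1}$ gives $a\approx R\theta r^{n-1}/(\epsilon v^2)\ge c/\epsilon$, which is large and uniformly positive. This lower bound (in the paper, $V_r+\frac{r^{n-1}}{\epsilon\mu}aV=\dots$ with $a\ge\delta$) drives the whole proof. It makes the Duhamel kernel at most $e^{-\delta(r-\tau)/(\epsilon\mu)}$, so each integration gains a factor $\epsilon\mu$. It also makes the boundary-layer term $|V(1)|e^{-\delta\mu^{\alpha-1}/\epsilon}$ negligible for $r\ge 1+\mu^\alpha$.

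Second, $\mu u_r$ is not $O(\mu)$ inside the layer: $\mu u_r\approx u^B_y$, which is of order one in $\mu$. The viscous work $\mu\bigl((r^{n-1}u)_r/r^{n-1}\bigr)_r u+\Psi$ is harmless only because it is an exact derivative. In the once-integrated energy equation it therefore becomes the pointwise term $\epsilon^2\mu v\eta_r r^{-(n-1)}-2\epsilon^2\nu(n-1)v^2r^{-n}$.

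Third, you cannot estimate $\vartheta$ first and $w$ afterwards, because the errors are coupled in both directions. They interact through $\alpha-\alpha^E$ and the $\epsilon$-terms of the energy equation, and through $\tilde p_\theta\Theta$ in the momentum equation. What closes the argument in the paper is the pair $\sup|V|\le C\mu^\alpha+C\sup|\Theta|$ and $\sup|\Theta|\le C\mu^\alpha+C\epsilon(\sup|V|+\sup|\Theta|)$. The factor $\epsilon$ appears because $\eta$ enters the integrated energy equation only through terms carrying powers of $\epsilon$, so smallness of $\epsilon$ closes the loop.

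Finally, the rate $\mu^\alpha$ is asserted, not derived. In the paper it comes from splitting at $r=1+\mu^\alpha$. On $I_\infty$, $\Theta$ is integrated from infinity. On $I_b$, it is integrated from $r=1$, where $\Theta(1)=0$ and $|\Theta_r|\le C$ give $|\Theta|\le C\mu^\alpha$. The layer-interaction term is controlled by $|F_1|\le C|r-1|/(\epsilon\mu)$.

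Your integrated momentum equation also drops the nonlocal geometric term $-(n-1)\epsilon\int_r^\infty u(s)s^{-n}\,ds$, since the source $\epsilon u_r/r^{n-1}$ is not an exact derivative. It is small but must be treated, as the paper does with $F_2$ via an integration by parts.

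\textbf{Part (I).} This part contains a concrete error. The constant $K$ in $\kappa r^{n-1}\theta^E_r=\epsilon(c_p\theta^E+\tfrac12(u^E)^2)-K$ is not fixed by the far field. Since $\theta^E-\theta_+$ decays only like $r^{-(n-2)}$, $\kappa r^{n-1}\theta^E_r$ tends to a generally nonzero constant. With your choice $K=\epsilon c_p\theta_+$, the equation reads $\kappa r^{n-1}\chi_r=\epsilon c_p\chi+\tfrac{\epsilon}{2}(u^E)^2$, whose right-hand side is $O(\epsilon)$. The solution with $\chi(r_0)=\chi_b$ then tends to $\chi_b+O(\epsilon)$, which is nonzero for generic $\chi_b$, so the problem as you pose it is overdetermined. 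The quantity $K-\epsilon c_p\theta_+$ (the paper's $\alpha^E$) must remain an unknown, fixed by $\theta^E(r_0)=\theta_b$. The resulting term $\alpha^E/(\kappa(n-2)r^{n-2})$ is exactly what carries the $r^{-(n-2)}$ decay with amplitude $|\chi_b|$.

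Similarly, the momentum relation is nonlocal. The correct identity is $(p+\epsilon^2r^{-2(n-1)}\rho^{-1})_r=-(n-1)\epsilon^2r^{-(2n-1)}\rho^{-1}$, with $n-1$ rather than $2(n-1)$. So $\rho^E$ is not a pointwise function of $(r,\theta^E(r))$ given by the implicit function theorem. It has to be iterated jointly with $\theta^E$, as the paper does for $(\eta^E,\chi^E)$ in the $r^{n-2}$-weighted space.

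\textbf{Part (II).} This part is fine. Your phase-line treatment of $u^B_y=\epsilon(u^B-u_*)(1-R\theta_b/(u^Bu_*))$ is a valid and more elementary alternative to the paper's contraction in the exponentially weighted space $Y$.
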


This paper is organized as follows.
In Section 2, we reformulate the stationary problem (\ref{nrs0})
in terms of the specific volume and the perturbations from the far-field
state, and recall the existence and decay estimates that are uniform
with respect to $\mu\in(0,1]$.
Section 3 is devoted to the outflow problem.
We construct the corresponding solution of the limiting problem
(\ref{Es}) and prove that the viscous stationary solution converges
uniformly to it as $\mu\to0$.
In Section 4, we treat the inflow problem.
We construct both the limiting stationary solution and the boundary-layer
profile, and show that the viscous stationary solution is uniformly
approximated by their superposition, with an algebraic rate of convergence.


\begin{remark}
The exponential estimate in Theorem \ref{bl+E} (II) shows that the boundary
layer is localized near the inner boundary on the scale $O(\mu)$.
Indeed, since
\[
    y=\frac{r-r_0}{\mu},
\]
the decay factor can be written as
\[
    \exp\left\{-\frac{\nu_0(r-r_0)}{\epsilon\mu}\right\}.
\]
Hence the boundary-layer correction becomes exponentially small away
from an $O(\mu)$-neighborhood of $r=r_0$.
\end{remark}


\begin{remark}
In the present analysis, the viscosity parameter $\mu$ tends to zero
while the heat-conductivity coefficient $\kappa$ is kept fixed.
It would be interesting to study the simultaneous limit
$\mu\to0$ and $\kappa\to0$.
In that case, the structure of the limiting energy equation and the
corresponding thermal boundary layer may be essentially different
from those considered here.
\end{remark}

\section{Reformulation of the stationary problem}

We first rewrite the stationary problem \eqref{nrs0} in a form
that will be convenient for the subsequent analysis.
Throughout this section, we restrict ourselves to the case $u_b\ne0$.
Indeed, when $u_b=0$, equivalently $\epsilon=0$ in \eqref{Nep},
the stationary problem can be solved explicitly as
\[
 (\rho,u,\theta)
 =
 \left(
 \frac{\rho_+\theta_+}{\widetilde\theta},
 0,
 \widetilde\theta
 \right),
 \qquad
 \widetilde\theta(r)
 =
 \theta_+
 +(\theta_b-\theta_+)
 \left(\frac{r}{r_0}\right)^{-(n-2)}.
\]
Hence, only the case $u_b\ne0$ needs to be considered below.

For simplicity, we normalize the inner radius by setting $r_0=1$.
We then introduce the specific volume
\[
 v=\frac{1}{\rho},
\]
and write $v_b=1/\rho_b$ and $v_+=1/\rho_+$.
By the mass conservation relation \eqref{n1}, the velocity is expressed as
%
%

\begin{align}
\label{urv}
u(r)=\frac{\epsilon}{r^{n-1}}v(r),\quad r\ge 1,
\end{align}
where $\epsilon = u_b/v_b\ (u_b>0)$, and
$\epsilon = u_b/v(1)\ (u_b <0)$. 
Substituting (\ref{urv}) into the second equation of (\ref{nrs0}), we have
\begin{align}
\label{base}
\frac{\epsilon^2}{r^{n-1}}\left(\frac{v}{r^{n-1}}\right)_r+\tilde{p}(v, \theta)_r
= \epsilon\mu\left(\frac{v_r}{r^{n-1}}\right)_r,
\end{align}
where
\[
  \widetilde p(v,\theta)
  :=p\left(\frac{1}{v},\theta\right)
  =\frac{R\theta}{v}.
\]
In particular,
\[
  \widetilde p_v(v,\theta)
  =-\frac{R\theta}{v^2}<0
  \qquad (v>0,\ \theta>0).
\]
We further introduce new unknown functions $\eta$ and $\chi$
as the deviations of $v$ and $\theta$ from their far-field values
$v_+$ and $\theta_+$, respectively, by
\begin{equation}
 \eta(r)=v(r)-v_+,\qquad
 \chi(r)=\theta(r)-\theta_+,\qquad r\ge1.
 \label{transec}
\end{equation}
Substituting the variables defined in \eqref{transec} into \eqref{base}, we obtain 
\begin{equation}
\label{Narrange}
\epsilon \mu \left(\frac{ \eta_r}{r^{n-1}}\right)_r=
\tilde p(v_++\eta, \theta_++\chi)_r
+\frac{\epsilon^2 v_+}{2}\Big(\frac{1}{r^{2(n-1)}}\Big)_r
+\frac{\epsilon^2}{r^{n-1}}\left(\frac{\eta}{r^{n-1}}\right)_r,
\end{equation}
where $\epsilon = u_b/v_b\ (u_b>0)$, and $\epsilon = u_b/(v_++\eta(1))\ (u_b < 0)$. 
Equation \eqref{Narrange} can be written equivalently as
\begin{equation}
\label{Narranger}
\left(\epsilon \mu \frac{\eta_r}{r^{n-1}}-\tilde p(v_++\eta, \theta_++\chi) 
-\frac{\epsilon^2 v_+}{2r^{2(n-1)}}
-\frac{\epsilon^2\eta}{r^{2(n-1)}}
+\epsilon^2(n-1)\int_r^{\infty}\frac{\eta(s)}{s^{2n-1}}ds
\right)_r =0.
\end{equation}

Hence, the quantity in parentheses in \eqref{Narranger} is independent of $r$.
Using
\[
  \eta(r)\to0,\qquad \chi(r)\to0
  \quad\text{as }r\to\infty,
\]
we find that this constant is $-\widetilde p(v_+,\theta_+)$. 
Consequently, the equation for \(\eta\) can be written as
\begin{align}
\label{eta_r}
\begin{aligned}
&\eta_r= 
\frac{r^{n-1}}{\epsilon\mu}\big(\tilde p(v_++\eta, \theta_++\chi)- \tilde p(v_+,\theta_+)\big)\\[2mm]
&\qquad\ +\frac{\epsilon v_+}{2\mu}\frac{1}{r^{n-1}}
+\frac{\epsilon\eta}{\mu r^{n-1}}
-\frac{\epsilon(n-1)r^{n-1}}{\mu}\int_r^{\infty}\frac{\eta(s)}{s^{2n-1}}ds,
\quad r>1.
 \end{aligned}
\end{align}
To rewrite the third equation of \eqref{nrs0}, we multiply it by \(r^{n-1}\) and substitute \(u=\epsilon(v_++\eta)/r^{n-1}\). This yields
\footnotesize
\begin{align}
\label{enea}
\begin{aligned}
&\left\{\epsilon \left(c_V\theta + \frac{1}{2}\left(\frac{\epsilon (v_++\eta)}{r^{n-1}}\right)^2\right)\right\}_r
+(R\epsilon\theta)_r\\
&\hspace{3cm}= \kappa(r^{n-1}\theta_r)_r+\epsilon^2\mu \Big(\dfrac{(v_++\eta)\eta_r}{r^{n-1}}\Big)_r
+\left(\frac{-2\e^2\nu(n-1)(v_++\eta)^2}{r^n}\right)_r.
\end{aligned}
\end{align}
\normalsize
Substituting \(\theta=\theta_++\chi\) into \eqref{enea} and integrating the resulting equation over \([r,\infty)\), we obtain
\begin{align*}
\epsilon c_P\chi + \dfrac{\epsilon^3}{2}\Big(\dfrac{v_++\eta}{r^{n-1}}\Big)^2
- \kappa r^{n-1}\chi_r = \alpha
+\epsilon^2\mu \dfrac{(v_++\eta)\eta_r}{r^{n-1}}
-\frac{2\e^2\nu(n-1)(v_++\eta)^2}{r^n},
\end{align*}
where $c_P:=R+c_V$ is the specific heat at constant pressure, and $\alpha$ is a constant of integration.
\\
Accordingly, the stationary problem \eqref{nrs0} can be reformulated in terms of \((\eta,\chi)\) as follows:
\begin{align}
\label{reforme}
 \left\{
\begin{aligned}
&\eta_r= 
\frac{r^{n-1}}{\epsilon\mu}\big(\tilde p(v_++\eta, \theta_++\chi)- \tilde p(v_+,\theta_+)\big)\\[2mm]
&\qquad\ +\frac{\epsilon v_+}{2\mu}\frac{1}{r^{n-1}}
+\frac{\epsilon\eta}{\mu r^{n-1}}
-\frac{\epsilon(n-1)r^{n-1}}{\mu}\int_r^{\infty}\frac{\eta(s)}{s^{2n-1}}ds,
\quad r>1,\\[2mm]
&\chi_r=-\frac{\alpha}{\kappa r^{n-1}}+ H_0(\eta,\eta_r,\chi)(r), \quad r>1,\\[2mm]
&H_0(\eta,\eta_r,\chi)(r):=\frac{\epsilon c_P}{\kappa r^{n-1}}\chi + 
\frac{\epsilon^3}{2}\frac{(v_++\eta)^2}{\kappa r^{3(n-1)}}\\
&\hspace{5cm}-\epsilon^2\mu \dfrac{(v_++\eta)\eta_r}{\kappa r^{2(n-1)}}+\frac{2\epsilon^2\nu(n-1)(v_++\eta)^2}{\kappa r^{2n-1}},\\
&\displaystyle{\lim_{r\to \infty}}\eta(r) =0,\quad \displaystyle{\lim_{r\to \infty}}\chi(r) =0,  \\[2mm]
&\eta(1):(no\ boundary\ condition)\  (u_b<0), \ \ \eta(1)= \eta_b:=v_b-v_+\  (u_b>0),\\[2mm] 
&\chi(1)= \chi_b:=\theta_b-\theta_+,\\
&\epsilon = u_b/(v_++\eta(1))<0\  (u_b<0), \qquad \epsilon = u_b/(v_++\eta_b)>0\  (u_b>0),
 \end{aligned}
 \right.
\end{align}
where we define
$H(\eta,\eta_r,\chi)(r):= -\int_r^\infty H_0(\eta,\eta_r,\chi)(\tau)\ d\tau$
and $\alpha$ is defined
\begin{align}
\label{alpha}
\alpha := \kappa(n-2)\chi_b-\kappa(n-2)H(\eta,\eta_r,\chi)(1),
\end{align}
so that $\chi(1)=\chi_b$.
Once the desired solution $(\eta, \chi)$ of (\ref{reforme}) is obtained, 
the velocity $u$ is immediately obtained by (\ref{urv}) and (\ref{transec}) as
\begin{equation*}
u(r)=\frac{u_b(v_++\eta(r))}{v_br^{n-1}}\quad (u_b>0),
\quad
u(r)=\frac{u_b(v_++\eta(r))}{(v_++\eta(1))r^{n-1}}\quad (u_b < 0).
\end{equation*} 
We recall the following existence result for the reformulated problem \(\eqref{reforme}\). 
\noindent
\begin{thm}{\rm(\cite{H-M1})}
\label{rmt}
Let $n\geq 3$. Then, for any $v_+>0$ and $\theta_+>0$,
there exist positive constants $\epsilon_0$ and $C$,
independent of $\mu\in(0,1]$, such that the following statements hold:
\medskip

\noindent
{\rm (I)} \ Let $u_b>0$. If $|u_b|+|\eta_b|+|\chi_b|\le \epsilon_0$, 
there exists a unique smooth solution $(\eta,\chi)$ of the problem {\rm (\ref{reforme})} satisfying 
\begin{align}
\label{decayi}
|\eta(r)|,\ \ |\chi(r)|\le Cr^{-(n-2)}(|u_b|+|\eta_b|+|\chi_b|),\qquad  r\ge 1. 
\end{align}
{\rm (II)} \ Let $u_b < 0$. If $|u_b|+|\chi_b| \le \epsilon_0$, 
there exists a unique smooth solution $(\eta,\chi)$ of the problem {\rm (\ref{reforme})} satisfying
\begin{align}
\label{decayo}
|\eta(r)|,\ \ |\chi(r)|\le Cr^{-(n-2)}(|u_b|+|\chi_b|), \qquad r\ge 1. 
\end{align}
\end{thm}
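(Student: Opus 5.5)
The plan is to obtain $(\eta,\chi)$ as a fixed point of a map built from the two first-order equations in \eqref{reforme}. The map will be shown to be a contraction on the weighted space
\[
X_\delta:=\{(\eta,\chi)\in C([1,\infty))^2;\ \|(\eta,\chi)\|:=\sup_{r\ge1}r^{n-2}(|\eta(r)|+|\chi(r)|)\le\delta\},
\qquad \delta:=C_1(|u_b|+|\eta_b|+|\chi_b|),
\]
and all constants will be tracked so that they are independent of $\mu\in(0,1]$.

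\textbf{Splitting the $\eta$-equation.} Expand the pressure as
\[
\tilde p(v_++\eta,\theta_++\chi)-\tilde p(v_+,\theta_+)=-a\eta+b\chi+N(\eta,\chi),
\qquad a=\frac{R\theta_+}{v_+^2}>0,\quad b=\frac{R}{v_+},
\]
with $|N|\le C(|\eta|+|\chi|)^2$. Given $(\tilde\eta,\tilde\chi)\in X_\delta$, the new $\eta$ solves the linear equation
\[
\eta_r+\frac{a r^{n-1}}{\epsilon\mu}\eta=\frac{r^{n-1}}{\epsilon\mu}\Big(b\tilde\chi+N(\tilde\eta,\tilde\chi)+\frac{\epsilon^2v_+}{2r^{2(n-1)}}+\frac{\epsilon^2\tilde\eta}{r^{2(n-1)}}-\epsilon^2(n-1)\int_r^\infty\frac{\tilde\eta}{s^{2n-1}}ds\Big)=:\frac{r^{n-1}}{\epsilon\mu}F.
\]
The two flow directions are handled differently.

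\emph{Inflow ($\epsilon>0$).} The damping coefficient is positive, so the equation is integrated forward from $\eta(1)=\eta_b$ with the kernel $\exp\big(-\tfrac{a}{n\epsilon\mu}(r^n-s^n)\big)$.

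\emph{Outflow ($\epsilon<0$).} The coefficient has the opposite sign, so the only bounded solution is obtained by integrating backward from $r=\infty$. This is exactly why no boundary condition on $\eta(1)$ can be imposed in this case.

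In both cases the elementary bound
\[
\int \frac{s^{n-1}}{|\epsilon|\mu}e^{-\frac{a}{n|\epsilon|\mu}|r^n-s^n|}(\cdot)\,ds\le\frac1a\sup(\cdot)
\]
(over the relevant range of $s$), combined with the monotonicity of the weight $s^{-(n-2)}$ on the appropriate side, gives
\[
r^{n-2}|\eta(r)|\le C\big(|\eta_b|+\|\tilde\chi\|+\delta^2+|\epsilon|\big)
\]
uniformly in $\mu$ and $\epsilon$. Here the boundary term $\eta_b e^{-a(r^n-1)/(n\epsilon\mu)}$, present only for inflow, decays faster than any power of $r$. In the outflow case $\epsilon=u_b/(v_++\eta(1))$ depends on the unknown. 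I will therefore freeze $\epsilon=u_b/(v_++\tilde\eta(1))$ inside the map, which is harmless since $|\epsilon|\le C|u_b|$.

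\textbf{The $\chi$-equation.} Set
\[
\chi(r)=\frac{\alpha}{\kappa(n-2)}r^{-(n-2)}+H(\tilde\eta,\tilde\eta_r,\tilde\chi)(r),
\]
where $\alpha$ is given by \eqref{alpha} so that $\chi(1)=\chi_b$. The only dangerous term in $H_0$ is $\epsilon^2\mu(v_++\eta)\eta_r/r^{2(n-1)}$, because $\eta_r$ is of size $1/\mu$ inside the boundary layer. I will not estimate $\eta_r$ directly. Instead I will substitute the $\eta$-equation, $\mu\eta_r=\epsilon^{-1}r^{n-1}F-a\epsilon^{-1}r^{n-1}\eta$ (evaluated at the new $\eta$), which turns this term into
\[
\frac{\epsilon\,(v_++\eta)(F-a\eta)}{\kappa r^{n-1}},
\]
of size $O(|\epsilon|\delta r^{-(n-1)-(n-2)})$. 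This is integrable with the right weight and free of $\mu$; the remaining terms of $H_0$ are $O(|\epsilon|r^{-(n-1)})$ times bounded quantities. Consequently $|H(r)|\le C|\epsilon|(1+\delta)r^{-(n-2)}$, $|\alpha|\le C(|\chi_b|+|\epsilon|)$, and hence $r^{n-2}|\chi|\le C(|\chi_b|+|u_b|)$.

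\textbf{Closing the argument and the main obstacle.} Feeding the $\chi$ bound into the $\eta$ bound (or solving for the pair jointly) shows that the map sends $X_\delta$ into itself once $C_1$ is large and $\epsilon_0$ is small. The difference estimates are the same linear bounds applied to differences; the nonlinearities contribute factors of $\delta$ or $|\epsilon|$, so the map is a contraction. The fixed point is smooth by bootstrapping the ODEs, and it is unique in $X_\delta$; uniqueness in the larger class of decaying solutions follows by the same difference estimate.

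I expect the main obstacle to be the $\mu$-uniformity in the presence of the $1/\mu$ derivative $\eta_r$. The point is that it must never be estimated directly but always replaced via the equation as above. One must also check that the weighted kernel bound holds near $r=1$ uniformly in the sign of $\epsilon$, and that the implicit dependence of $\epsilon$ on $\eta(1)$ in the outflow case stays Lipschitz with a small constant.
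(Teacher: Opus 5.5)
Your proposal follows essentially the route the paper indicates: the paper defers the proof to earlier work, where the solution is obtained by approximate sequences and a contraction argument in the weighted space $\sup_{r\ge1}r^{n-2}|\cdot|$ with the exponential-kernel bounds of Lemma~\ref{hyokaG} (compare the proof of Theorem~\ref{rmtE}). Your backward integration for $u_b<0$ and your use of the $\eta$-equation to control $\mu\eta_r$ are the right $\mu$-uniform devices, but two details need tightening. First, for $u_b>0$ the forward integral over $[1,r]$ has the weight $s^{-(n-2)}$ on the wrong side of monotonicity, so you need the kernel-concentration estimate of Lemma~\ref{hyokaG}(II) (split at $s=r/2$, where the far part is exponentially small uniformly in $\epsilon\mu$); second, the linear term $\frac{R}{v_+}\tilde\chi$ gives the $\eta$-component an $O(1)$ Lipschitz dependence on $\tilde\chi$, so your map need not be a contraction in the unweighted norm, and you should order the iteration as in \eqref{appEout} ($\chi$ first, then $\eta$ with the new $\chi$) or weight the $\chi$-component more heavily.
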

Theorem \ref{rmt} was proved in \cite{H-M1} by constructing approximate sequences and applying the contraction mapping theorem.


\section{Outflow problem}
This section is devoted to the outflow case $u_b<0$ and to the proof
of Theorem~\ref{etE}(II). More precisely, we compare the stationary
solution of \eqref{nrs0} with the corresponding solution of the
inviscid problem \eqref{Es} satisfying
\[
 u^E(1)=u_b, \qquad \theta^E(1)=\theta_b,
\]
and establish their convergence as $\mu\to0$.
\subsection{Inviscid problem}

We begin by studying the inviscid problem \eqref{Es} subject to
\[
 u^E(1)=u_b, \qquad \theta^E(1)=\theta_b.
\]
We first derive a convenient formulation of the problem and then
establish the existence of its solution. 
The first equation of \eqref{Es} implies that
\begin{align}
\label{n1e}
r^{n-1}\rho^{E}(r) u^{E}(r)=\epsilon^{E}, \qquad r\ge 1, 
\end{align}
for some constant \(\epsilon^E\). Using the boundary condition \(u^E(1)=u_b\), we obtain
\begin{equation*}
\epsilon^{E} =  u_b \rho^{E}(1)= u_b/v^{E}(1),
\end{equation*}
where $v^{E}(r):=1/\rho^{E}(r)$. 
It then follows from (\ref{n1e}) that the velocity \(u^E\) is given by
\begin{align}
\label{uE}
u^{E}(r)=\frac{\epsilon^{E}}{r^{n-1}}v^{E}(r),\quad r\ge 1.
\end{align}
Substituting (\ref{uE}) into the second equation of (\ref{Es}), and introducing
\[
\eta^E(r):=v^E(r)-v_+,\qquad
\chi^E(r):=\theta^E(r)-\theta_+,
\qquad
\chi_b:=\theta_b-\theta_+,
\]
we obtain
\begin{align*}
\frac{|\epsilon^{E}|^2}{r^{n-1}}\left(\frac{v_++\eta^{E}}{r^{n-1}}\right)_r+\tilde{p}(v_++\eta^{E}, \theta_++\chi^{E})_r=0. 
\end{align*}
This equation can also be rewritten as
\begin{align}
\label{E_r2}
\eta^{E}_r(r)=\frac{r^{-(2n-1)}({\epsilon^E})^2(n-1)(v_++\eta^{E})-\ti{p}_{\theta}(v_++\eta^E, \theta_++\chi^E)\chi^E_r}{(\epsilon^E)^2r^{-2(n-1)}+\tilde p_v(v_++\eta^{E}, \theta_++\chi^E)}.
\end{align}
Integrating \(\eqref{E_r2}\) over \([r,\infty)\), we obtain 
\begin{align}
\label{E2}
\begin{aligned}
\eta^{E}(r)&=\int_r^{\infty} \frac{s^{-(2n-1)}|\epsilon^E|^2(n-1)(v_++\eta^{E})-\ti{p}_{\theta}(v_++\eta^E, \theta_++\chi^E)\chi^E_s}{|\tilde p_v(v_++\eta^{E}, \theta_++\chi^E)|-|\epsilon^E|^2s^{-2(n-1)}} ds \\
&=:\int_r^{\infty}G[\eta^{E}, \chi^{E}](s)ds,
\end{aligned}
\end{align}
where we have used the far-field condition
\[
\lim_{r\to\infty}\eta^E(r)=0.
\]
Following the same procedure used to derive (\ref{reforme}) from (\ref{urv}), we can also reformulate the second equation of (\ref{Es}) in another form:
\begin{align}
\label{reformE}
 \left\{
\begin{aligned}
&\big(\tilde p(v_++\eta^{E}, \theta_++\chi^E)- \tilde p(v_+, \theta_+)\big)+\frac{|\epsilon^{E}|^2 v_+}{2}\frac{1}{r^{2(n-1)}}\\[2mm]
&\qquad\qquad\qquad\qquad\quad
+\frac{|\epsilon^{E}|^2\eta^{E}}{r^{2(n-1)}}-|\epsilon^{E}|^2(n-1)\int_r^{\infty}\frac{\eta^{E}(s)}{s^{2n-1}}ds=0,
\quad r \ge 1,
\\[2mm]
&\displaystyle{\lim_{r\to \infty}}\eta^{E}(r) =0,\quad  \displaystyle{\lim_{r\to \infty}}\chi^{E}(r) =0,\quad \chi^E(1)=\chi_b,
\end{aligned}
 \right.
\end{align}
where \(\epsilon^E=u_b/(v_++\eta^E(1))\). Using \(\widetilde p(v,\theta)=R\theta/v\) in (\ref{reformE}), we obtain
\begin{equation}
\label{reformE-1}
R(\dfrac{\theta_++\chi^{E}}{v_++\eta^{E}}-\dfrac{\theta_+}{v_+})
+\frac{|\epsilon^{E}|^2 v_+}{2r^{2(n-1)}}
+\frac{|\epsilon^{E}|^2\eta^{E}}{r^{2(n-1)}}
-|\epsilon^{E}|^2(n-1)\int_r^{\infty}\frac{\eta^{E}(s)}{s^{2n-1}}ds
=0. 
\end{equation}
Noting that the first term of (\ref{reformE-1}) can be rewritten as 
\begin{equation}
R(\dfrac{\theta_++\chi^{E}}{v_++\eta^{E}}-\dfrac{\theta_+}{v_+})
=\dfrac{R}{v_+}\chi^{E}-\dfrac{R\theta_+}{v_+^2}\eta^{E}-\frac{R\chi^{E}\eta^{E}}{v_+(v_++\eta^{E})}+\frac{R\theta_+(\eta^{E})^2}{v_+^2(v_++\eta^{E})}, 
\end{equation}
we rewrite (\ref{reformE-1}) as  
\begin{align}
\label{reformE2}
\begin{aligned}
&\dfrac{R}{v_+}\chi^{E}-\dfrac{R\theta_+}{v_+^2}\eta^{E}-\frac{R\chi^{E}\eta^{E}}{v_+(v_++\eta^{E})}+\frac{R\theta_+(\eta^{E})^2}{v_+^2(v_++\eta^{E})}\\
&\qquad +\frac{|\epsilon^{E}|^2 v_+}{2r^{2(n-1)}}
+\frac{|\epsilon^{E}|^2\eta^{E}}{r^{2(n-1)}}
-|\epsilon^{E}|^2(n-1)\int_r^{\infty}\frac{\eta^{E}(s)}{s^{2n-1}}ds
=0. 
\end{aligned}
\end{align}
Solving (\ref{reformE2}) for \(\eta^E\), we obtain
\begin{align}
\label{ieEout}
\begin{aligned}
\eta^{E}=&\frac{v_+}{\theta_+}\chi^{E}-\frac{v_+\chi^{E}\eta^{E}}{\theta_+(v_++\eta^{E})}+\frac{(\eta^{E})^2}{v_++\eta^{E}}\\
&+\frac{|\epsilon^{E}|^2v_+^3}{2R\theta_+r^{2(n-1)}}+\frac{|\epsilon^{E}|^2v_+^2\eta^{E}}{R\theta_+r^{2(n-1)}}-\frac{|\epsilon^{E}|^2(n-1)v_+^2}{R\theta_+}\int_r^{\infty}\frac{\eta^{E}(s)}{s^{2n-1}}ds\\
&=:\dfrac{v_+}{\theta_+}\chi^{E} +F(\eta^{E},\chi^{E}).
\end{aligned}
\end{align}
We next rewrite the third equation of (\ref{Es}). 
Substituting (\ref{uE}) into the third equation of (\ref{Es}) and integrating with respect to \(r\), we obtain
\begin{align}
\label{chieout}
\epsilon^Ec_P\chi^E+\frac{(\epsilon^E)^3}{2}(\frac{v_++\eta^E}{r^{n-1}})^2-\kappa(r^{n-1}\chi^E_r)=\alpha^E,
\end{align}
where \(\alpha^E\) is an integration constant. 
Solving (\ref{chieout}) for \(\chi_r^E\), we obtain 
\begin{align}
\label{chi^E_r}
\chi^E_r=-\frac{\alpha^E}{\kappa r^{n-1}}+\frac{\epsilon^E c_P\chi^E}{\kappa r^{n-1}}+\frac{(\epsilon^E)^3}{2}\frac{(v_++\eta^E)^2}{\kappa r^{3(n-1)}}.
\end{align}
Integrating (\ref{chi^E_r}) over \([r,\infty)\), we obtain
\begin{align}
\begin{aligned}
\label{chi^Eintout}
\chi^E(r)&=\frac{\alpha^E}{\kappa (n-2)r^{n-2}}-
\int^\infty_r \Big\{\frac{\epsilon^E c_P\chi^E(s)}{\kappa s^{n-1}} + 
\dfrac{(\epsilon^E)^3(v_++\eta^E(s))^2}{2\kappa s^{3(n-1)}}\Big\}\,ds\\
&=: \frac{\alpha^E}{\kappa (n-2)r^{n-2}}+H^E(\eta^{E}, \chi^{E})(r).
\end{aligned}
\end{align}
Setting \(r=1\) and using \(\chi^E(1)=\chi_b\), we obtain
\begin{equation}
\label{alphadef}
\alpha^E = \kappa(n-2)\chi_b-
\kappa(n-2)H^E(\eta^{E}, \chi^{E})(1).
\end{equation}
The existence result to be proved for the reformulated problem (\ref{ieEout}) and (\ref{chi^Eintout}) is stated as follows.
\begin{thm} 
\label{rmtE}
Let $n \ge 3$. For any $v_+>0$ and $\theta_+>0$, there exist 
positive constants $\epsilon_0$ and $C$ such that if $|u_b|+|\chi_b| \le \epsilon_0$, 
the system consisting of (\ref{ieEout}) and (\ref{chi^Eintout}), with \(\alpha^E\) defined by (\ref{alphadef}), admits a unique smooth solution \((\eta^E,\chi^E)\) satisfying
\begin{subequations}
\begin{align}
\label{decayE1}
&|\eta^{E}(r)|,\ |\chi^{E}(r)|\le Cr^{-(n-2)}(|u_b|+|\chi_b|), \qquad r\ge 1, \\[2mm]
\label{decayE22}
&|\eta^{E}_r(r)|,\ |\chi^{E}_r(r)| \le Cr^{-(n-1)}(|u_b|+|\chi_b|), \ \ \qquad r\ge 1.
\end{align}
\end{subequations}
\end{thm}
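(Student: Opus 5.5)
We solve the coupled system (\ref{ieEout})--(\ref{chi^Eintout}) with the contraction mapping principle on a weighted space. Define
\[
X:=\Big\{(\eta,\chi)\in C^1([1,\infty))^2:\ \|(\eta,\chi)\|_X:=\sup_{r\ge1}\big(r^{n-2}(|\eta|+|\chi|)+r^{n-1}(|\eta_r|+|\chi_r|)\big)<\infty\Big\},
\]
and let $B_\delta$ be the closed ball $\{\|(\eta,\chi)\|_X\le \delta\}$ with $\delta:=M(|u_b|+|\chi_b|)$, where $M$ is a large constant to be fixed later. For $(\eta,\chi)\in B_\delta$ we first set $\epsilon^E:=u_b/(v_++\eta(1))$. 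Since $\delta\le M\epsilon_0$ is small, we have $|\epsilon^E|\le 2|u_b|/v_+$, and $\epsilon^E$ depends Lipschitz-continuously on $\eta(1)$. Next we define $\alpha^E$ by (\ref{alphadef}) and put
\[
\bar\chi(r):=\frac{\alpha^E}{\kappa(n-2)r^{n-2}}+H^E(\eta,\chi)(r),\qquad \bar\eta:=\frac{v_+}{\theta_+}\bar\chi+F(\eta,\chi).
\]
The map is $T(\eta,\chi):=(\bar\eta,\bar\chi)$. Because $\alpha^E$ is chosen through (\ref{alphadef}), the condition $\bar\chi(1)=\chi_b$ holds automatically.

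\emph{Self-mapping.} We integrate the integrand of $H^E$ term by term. The term $|\epsilon^E||\chi|s^{-(n-1)}$ is bounded by $C|u_b|\delta s^{-(2n-3)}$, and integrating it gives $C|u_b|\delta r^{-(2n-4)}\le C|u_b|\delta r^{-(n-2)}$ since $n\ge3$. The cubic term gives $C|u_b|^3r^{-(3n-4)}$. Hence $|H^E|\le C(|u_b|\delta+|u_b|^3)r^{-(n-2)}$, and therefore $|\alpha^E|\le C(|\chi_b|+|u_b|\delta+|u_b|^3)$. It follows that
\[
|\bar\chi|\le C(|\chi_b|+|u_b|)r^{-(n-2)}+C\epsilon_0\delta r^{-(n-2)}.
\]
The derivative $\bar\chi_r$ is read off directly from (\ref{chi^E_r}) and satisfies the same bound with weight $r^{-(n-1)}$.

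For $\bar\eta$ we bound $F$ piece by piece. The quadratic terms $\chi\eta$ and $\eta^2$ are bounded by $C\delta^2r^{-2(n-2)}$. The terms carrying $|\epsilon^E|^2$ are $O(|u_b|^2r^{-2(n-1)})$ and $O(|u_b|^2\delta r^{-2(n-1)})$. The integral term is $O(|u_b|^2\delta r^{-(3n-4)})$. All of these are bounded by $C(|u_b|+\epsilon_0\delta)r^{-(n-2)}$.

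To estimate $\bar\eta_r$ we differentiate $F$ directly. The derivatives involve $\eta_r$ and $\chi_r$, which we control by the $X$-norm. The derivative of $\int_r^\infty$ is just the integrand, so no loss of derivatives occurs. Choosing $M$ large and then $\epsilon_0$ small gives $T(B_\delta)\subset B_\delta$.

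\emph{Contraction.} We take differences of two elements of $B_\delta$. Every term of $F$ and $H^E$ is either at least quadratic in $(\eta,\chi)$ or carries a factor $\epsilon^E$. The difference of the $\epsilon^E$'s is bounded by $C|u_b|\,|\eta_1(1)-\eta_2(1)|$. These two facts give the Lipschitz constant $C(\delta+|u_b|)\le C(M+1)\epsilon_0<1/2$.

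The main obstacle is the linear term $\frac{v_+}{\theta_+}\bar\chi$ in $\bar\eta$. It is not small, so it must be handled by composition. We use the updated $\bar\chi$ rather than $\chi$, which makes $T$ a contraction because $\bar\chi$ itself is already Lipschitz with small constant. Equivalently, we contract first in $\chi$ and then read off $\eta$. With this ordering, Banach's fixed point theorem yields a unique fixed point in $B_\delta$, and its bounds are exactly (\ref{decayE1})--(\ref{decayE22}).

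\emph{Smoothness.} We bootstrap from the representations. The fixed point is $C^1$ and satisfies (\ref{chi^E_r}). For $\eta^E$, we derive (\ref{E_r2}) by differentiating (\ref{reformE-1}). Its denominator satisfies $|\tilde p_v|-|\epsilon^E|^2r^{-2(n-1)}\ge R\theta_+/(2v_+^2)>0$; this is the subsonic condition and follows from smallness. Consequently higher regularity propagates from the equations themselves.

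Uniqueness within the class described by (\ref{decayE1})--(\ref{decayE22}) follows once $M$ is chosen larger than the constant $C$ appearing there, since any such solution then lies in $B_\delta$ and must coincide with the fixed point.
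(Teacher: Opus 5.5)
Your proposal is correct and is essentially the paper's proof. The scheme \eqref{appEout} is exactly Picard iteration of your map $T$, including the point you single out: feeding the updated $\chi^{E(m+1)}$ into the $\eta$-equation so that the non-small linear term $\frac{v_+}{\theta_+}\chi$ does not spoil the contraction. The paper's uniform bound together with $D_{m+1}\le C(|u_b|+|\chi_b|)D_m$ in the weight $r^{n-2}$ is precisely your self-mapping and contraction step. The only difference is cosmetic: you put the $r^{n-1}$-weighted derivatives into the norm, whereas the paper works with $\sup_{r\ge1}|r^{n-2}f(r)|$ alone and reads off \eqref{decayE22} afterwards from \eqref{E_r2} and \eqref{chi^E_r}.
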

Since \(\rho^E=(v_++\eta^E)^{-1}\), \(\theta^E=\theta_++\chi^E\), and \(u^E\) is given by (\ref{uE}), Theorem \ref{rmtE} immediately yields statement (I) of Theorem \ref{etE}.

To prove Theorem \ref{rmtE}, we seek a solution of the integral equations (\ref{ieEout}) and (\ref{chi^Eintout}) in the Banach space \(X\), equipped with the norm $\|\cdot\|_X$, defined by
\begin{align*}
X=\left\{f\in C([1,\infty)):\sup_{r\ge1}|r^{n-2}f(r)|<\infty\right\},
\quad \| f \|_X = \sup_{r\ge 1}|r^{n-2}f(r)|.
\end{align*}
\noindent
For this purpose, we define the approximate sequence \(\{(\eta^{E(m)},\chi^{E(m)})\}_{m\ge0}\) recursively by
\begin{align}
\label{appEout}
\left\{\begin{array}{l}
{\eta^E}^{(m+1)} = \dfrac{v_+}{\theta_+}{\chi^E}^{(m+1)} +F({\eta^E}^{(m)},{\chi^E}^{(m)}),\\[5pt]
{\chi^E}^{(m+1)} = \frac{\chi_b-H^E({\eta^E}^{(m)}, {\chi^E}^{(m)})(1)}{r^{n-2}}+H^E({\eta^E}^{(m)},{\chi^E}^{(m)}),\\[5pt]
{\epsilon^{E}}^{(m)}=u_b/(v_++{\eta^{E}}^{(m)}(1)),
\end{array}
 \right.\,
\end{align}
starting with $\eta^{E(0)}=\chi^{E(0)}=0$. 

\medskip

\noindent
{\it Proof of Theorem \ref{rmtE}.}\\
Since $|u_b|$ is assumed to be sufficiently small, we may assume that
$|u_b|<1$. 
We first establish a uniform bound for the approximate sequence. More precisely, there exist positive constants \(\delta\) and \(C\) such that, whenever
\[
|u_b|+|\chi_b|\le\delta,
\]
one can choose \(M>0\) satisfying
\begin{equation}
\label{eu}
\|{\eta^{E}}^{(m)}\|_X,\quad \|{\chi^{E}}^{(m)}\|_X \le M \le C(|u_b|+|\chi_b|),\quad m\ge 0.
\end{equation}
Here and below, \(C>0\) denotes a generic constant and \(\delta>0\) a sufficiently small constant. They are independent of \(u_b\) and \(\chi_b\), but may depend on \(v_+\), \(\theta_+\), \(n\), \(\kappa\), \(R\), and \(c_P\). 
We prove \eqref{eu} by mathematical induction. 

\smallskip

\noindent
\underline{Case $m=0$}. \quad
We recall that 
$
\eta^{E(0)}=\chi^{E(0)}=0.
$
Then, by the definition of $\epsilon^{E(m)}$, we have
\[
\epsilon^{E(0)}=\frac{u_b}{v_+}.
\]
Substituting $m=0$ into the second equation of \eqref{appEout}
and multiplying by $r^{n-2}$, we obtain
\[
\begin{aligned}
r^{n-2}\chi^{E(1)}(r)
={}&
\chi_b
-H^E(\eta^{E(0)},\chi^{E(0)})(1) +r^{n-2}H^E(\eta^{E(0)},\chi^{E(0)})(r)\\
=&\chi_b+
\int^\infty_1 
\dfrac{(\frac{u_b}{v_+})^3 v_+^2}{2\kappa s^{3(n-1)}}\,ds-r^{n-2}\int^\infty_r \dfrac{(\frac{u_b}{v_+})^3v_+^2}{2\kappa s^{3(n-1)}}\,ds\\
=&\chi_b+\frac{u_b^3}{2\kappa v_+(3n-4)}\left(1-\frac{1}{r^{2(n-1)}}\right).
\end{aligned}
\]
Hence,
\begin{align}
\label{appEout01}
\begin{aligned}
|r^{n-2}{\chi^E}^{(1)}(r)|\le C(|\chi_b|+|u_b|),
\end{aligned}
\end{align}
where we used the assumption $|u_b|<1$. Similarly, from the first equation of (\ref{appEout}), we obtain
\begin{align}
\label{appEout02}
\begin{aligned}
r^{n-2}{\eta^E}^{(1)} =& r^{n-2}\dfrac{v_+}{\theta_+}{\chi^E}^{(1)} +r^{n-2}F({\eta^E}^{(0)},{\chi^E}^{(0)})
=&\frac{v_+}{\theta_+}r^{n-2}{\chi^E}^{(1)}+\frac{|u_b|^2v_+}{2R\theta_+r^n}. 
\end{aligned}
\end{align}
It follows from (\ref{appEout01}) and (\ref{appEout02}) that
\begin{align}
\begin{aligned}
|r^{n-2}{\eta^E}^{(1)}(r)| \le C(|\chi_b|+|u_b|).
\end{aligned}
\end{align}
Hence, we choose \(M\) so that 
\begin{align}
\label{M1Eout}
C(|\chi_b|+|u_b|) \le M,
\end{align}
which ensures that $\|{\chi^{E}}^{(1)}\|_X, \ \|{\eta^{E}}^{(1)}\|_X \le M$. 

\smallskip

\noindent
\underline{Case $m=k$}\ $(k\ge 1)$.\quad
Assume that $\|{\eta^{E}}^{(k)}\|_X,\ \|{\chi^{E}}^{(k)}\|_X \le M$. 
We further require
\begin{align}
\label{M2Eout}
M\le \frac{v_+}{2},
\end{align}
which, in particular, implies
\begin{align}
\label{bound}
|{\eta^{E}}^{(k)}(r)|\le \frac{v_+}{2},\quad 
|{\e^{E}}^{(k)}|=|\frac{u_b}{v_++{\eta^{E}}^{(k)}(1)}|\le |\frac{2u_b}{v_+}|,
\quad r \ge 1.
\end{align}
We first estimate \(\chi^{E(k+1)}\) defined by (\ref{appEout}):
\begin{equation}
\begin{aligned}
\left|r^{n-2}\chi^{E(k+1)}(r)\right|
&\le |\chi_b|
   + C|u_b|M
   + C|u_b|^3(v_+ + M)^2 \\
&\le C\left(
|\chi_b|
+ |u_b|
+ |u_b|M
+ |u_b|M^2
\right).
\end{aligned}
\label{estchiEout}
\end{equation}
We next estimate \(\eta^{E(k+1)}\) using (\ref{estchiEout}).
 Multiplying the first equation of (\ref{appEout}) by $r^{n-2}$ and taking the absolute value, we obtain
\begin{align}
&\left|r^{n-2}\eta^{E(k+1)}(r)\right|\\
\leq {}&
\left|
\frac{v_+}{\theta_+}r^{n-2}\chi^{E(k+1)}
-\frac{v_+\bigl(r^{n-2}\chi^{E(k)}\bigr)\eta^{E(k)}}
{\theta_+\bigl(v_+ + \eta^{E(k)}\bigr)}
+\frac{\bigl(r^{n-2}\eta^{E(k)}\bigr)\eta^{E(k)}}
{v_+ + \eta^{E(k)}}
\right|
\nonumber\\
&+\left|
\frac{r^{n-2}\left|\epsilon^{E(k)}\right|^2v_+^3}
{2R\theta_+r^{2(n-1)}}
+\frac{r^{n-2}\left|\epsilon^{E(k)}\right|^2v_+^2\eta^{E(k)}}
{R\theta_+r^{2(n-1)}}-\frac{r^{n-2}\left|\epsilon^{E(k)}\right|^2(n-1)v_+^2}
{R\theta_+}
\int_r^\infty
\frac{\eta^{E(k)}(s)}{s^{2n-1}}\,ds
\right|
\nonumber\\
\leq {}&
C\left(
|\theta_b - \theta_+|
+|u_b|
+|u_b|M
+\bigl(|u_b|+1\bigr)M^2
\right).
\label{estetaEout}
\end{align}
Here, we take $|u_b|$ and $|\chi_b|$ sufficiently small and choose $M$ so that
\begin{equation}
\label{econdi2}
C(|\chi_b|+|u_b|)\le \frac{M}{3}, \qquad C|u_b|\le \frac{1}{3},\qquad C(|u_b|+1)M\le \frac{1}{3}.
\end{equation}
(\ref{estchiEout}) and (\ref{estetaEout})  give the desired estimate $\|{\eta^{E}}^{(k+1)}\|_X, \ \|{\chi^{E}}^{(k+1)}\|_X \le M$. By elementary calculations, it is easy to see that there exists a
positive constant $\delta$ such that if 
$|u_b|+|\chi_b| \le \delta$, the assumptions 
(\ref{M1Eout}), (\ref{M2Eout}) and (\ref{econdi2}) hold, and, in particular, \(M\) can be chosen as
\begin{equation*}
M = 3C(|\chi_b|+|u_b|),
\end{equation*}
which proves the uniform boundedness of 
$({\eta^{E}}^{(m)}, {\chi^{E}}^{(m)})$  $(m\ge 0)$ in $X$ with the estimate (\ref{eu}). 
Once the uniform bound \eqref{eu} has been established, it remains to
prove that the approximate sequence
$\{(\eta^{E(m)},\chi^{E(m)})\}_{m\geq 0}$
is a Cauchy sequence in $X\times X$.
For $m\geq 1$, we set
\[
D_m
:=
\|\eta^{E(m)}-\eta^{E(m-1)}\|_X
+
\|\chi^{E(m)}-\chi^{E(m-1)}\|_X.
\]
From the definition of $\epsilon^{E(m)}$ and \eqref{bound},
we have
\[
\begin{aligned}
 |\epsilon^{E(m)}-\epsilon^{E(m-1)}|
 &=
 |u_b|
 \left|
 \frac{1}{v_++\eta^{E(m)}(1)}
 -
 \frac{1}{v_++\eta^{E(m-1)}(1)}
 \right|  \\
 &\le
 C|u_b|
 \|\eta^{E(m)}-\eta^{E(m-1)}\|_X
 \le C|u_b|D_m .
\end{aligned}
\]
Using this estimate together with the uniform bound \eqref{eu},
and estimating the differences of the nonlinear terms
in the same manner as in the proof of the uniform bound, we obtain
\[
 D_{m+1}
 \le C\bigl(|u_b|+|\chi_b|\bigr)D_m,
 \qquad m\ge1.
\]
Hence, by taking $|u_b|+|\chi_b|$ sufficiently small, we may assume that
\[
C\bigl(|u_b|+|\chi_b|\bigr)\leq \frac12.
\]
It then follows that
\[
D_{m+1}\leq \frac12 D_m,
\qquad m\geq 1.
\]
Consequently,
\[
D_m\leq 2^{-(m-1)}D_1,
\qquad m\geq 1,
\]
and hence
$\{(\eta^{E(m)},\chi^{E(m)})\}_{m\geq 0}$
is a Cauchy sequence in $X\times X$.

Passing to the limit as $m\to\infty$ in the approximate system
\eqref{appEout}, we obtain a solution $(\eta^E,\chi^E)$ satisfying
the desired decay estimate \eqref{decayE1}.
The regularity and uniqueness of the solution follow from standard
arguments.
Finally, the decay estimate for $(\eta_r^E,\chi_r^E)$ in
\eqref{decayE22} follows directly from \eqref{E_r2} and
\eqref{chi^E_r}.  \quad $\Box$\\
\subsection{Zero-viscosity limit for the outflow problem}
In this subsection, we consider the zero-viscosity limit for the outflow problem. First, we rewrite (\ref{reformE}) in the form
\begin{align}
\label{out1}
 \left\{
\begin{aligned}
&-\frac{\big(\tilde p(v_++\eta^{E}, \theta_++\chi^E)- \tilde p(v_+, \theta_+)\big)}{\epsilon^{E} \mu}r^{n-1}
-\frac{{\epsilon^{E}} v_+}{2\mu}\frac{1}{r^{n-1}}\\[2mm]
&\qquad\qquad\qquad
-\frac{{\epsilon^{E}}\eta^{E}}{\mu r^{n-1}}+\frac{{\epsilon^{E}}(n-1)r^{n-1}}{\mu}\int_r^{\infty}\frac{\eta^{E}(s)}{s^{2n-1}}ds=0,\quad r \ge 1,\\[2mm]
&\displaystyle{\lim_{r\to \infty}}\eta^{E}(r) =0, \qquad \displaystyle{\lim_{r\to \infty}}\chi^{E}(r) =0.
\end{aligned}
 \right.
\end{align}
\noindent
We introduce new variables \(V(r)\) and \(\Theta(r)\) by $V(r):=\eta(r)-\eta^{E}(r)$ and $\Theta(r):=\chi(r)-\chi^{E}(r)$.  
Using the definitions of \(\epsilon\) and \(\epsilon^E\), together with the relations 
\begin{align}
\label{eaeE}
\epsilon-\epsilon^E=-\frac{u_bV(1)}{(v_++\eta(1))(v_++\eta^E(1))},\qquad \quad \frac{1}{\epsilon}-\frac{1}{\epsilon^E}=\frac{V(1)}{u_b},
\end{align}
and subtracting (\ref{out1}) from (\ref{reforme}), we obtain 
%
\begin{align}
\label{out2}
 \left\{
\begin{aligned}
&V_r-\frac{r^{n-1}}{\epsilon\mu}a(r)V= -\eta^{E}_r+F[V, \Theta](r),\quad r \ge 1,\\[2mm]
&\displaystyle{\lim_{r\to \infty}}V(r) =0,
\end{aligned}
 \right.
\end{align}
where
$$
a(r) := \frac{\tilde p(v_++\eta^{E}+V, \theta_++\chi^E+\Theta)-\tilde p(v_++\eta^{E}, \theta_++\chi^E+\Theta)}{V},
$$
and
\begin{align*}
\begin{aligned}
F[V, \Theta](r) :=&\frac{r^{n-1}}{\epsilon\mu}\left(\tilde p(v_++\eta^{E}, \theta_++\chi^E+\Theta)-\tilde p(v_++\eta^{E}, \theta_++\chi^E)\right) \\
&+\frac{r^{n-1}V(1)}{\mu u_b}\left(\tilde p(v_++\eta^{E}, \theta_++\chi^E)-\tilde p(v_+, \theta_+)\right)-\frac{V(1)u_b}{(\eta^{E}(1)+v_+)(\eta(1)+v_+)}\frac{v_+}{2\mu r^{n-1}}\\[2mm]
&+\frac{u_bV(r)}{\mu r^{n-1}(\eta(1)+v_+)}-\frac{V(1)u_b}{(\eta^{E}(1)+v_+)(\eta(1)+v_+)}
\frac{\eta^{E}(r)}{\mu r^{n-1}}\\[2mm]
&-\frac{u_b(n-1)r^{n-1}}{\mu(\eta(1)+v_+)}\int_r^{\infty}\frac{V(s)}{s^{2n-1}}ds
+\frac{V(1)u_b(n-1)r^{n-1}}{\mu (\eta^{E}(1)+v_+)(\eta(1)+v_+)}
\int_r^{\infty}\frac{\eta^{E}(s)}{s^{2n-1}}ds\\[2mm]
&=: I_1+I_2+I_3+I_4+I_5+I_6+I_7.
\end{aligned}
\end{align*}
Here we note that we already know the existence of the solution 
$\eta(r)$ of (\ref{reforme}) and $\eta^{E}(r)$ of (\ref{reformE}) with the estimates (\ref{decayo}) and (\ref{decayE1}) respectively. 
To estimate \(V\) in (\ref{out2}), we use the following lemma.
\begin{lem}(see \cite{HM0, H-M1})
\label{hyokaG}
Define \(G(r,s)\) by
\begin{align*}
G(r,s) := e^{-\frac{\omega}{\mu|\epsilon|}(r^n-s^{n})},\qquad \omega:=\frac{R\theta_+}{v_+^2n}.
\end{align*}
Then, the following statements hold:\\
{\rm (I)} If $|\epsilon|\le \frac{n\omega}{(n-2)\mu}$, then   
\begin{align}
\label{hyoka1}
r^{n-2}G(r,1)=r^{n-2}e^{-\frac{\omega}{\mu|\epsilon|}(r^n-1)}\le 1, \qquad r\ge 1.
\end{align}
{\rm (II)}
If  $l\ge -n$, 
then there exists a positive constant \(C_0\), independent of \(\epsilon\) and \(\mu\), such that, for any \(f\in X_l\),
\begin{align}
\label{ineqG}
\begin{aligned}
&r^{l+n-1}|\int_1^r G(r,s) f(s) ds|
\le C_0|\epsilon| \mu\|f\|_{X_l},\quad r\ge 1.
\end{aligned}
\end{align}
\end{lem}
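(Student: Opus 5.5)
The plan is to prove both parts by elementary calculus, using the explicit form of $G$. Throughout we write
\[
k:=\frac{\omega}{\mu|\epsilon|},
\]
so that $G(r,s)=e^{-k(r^n-s^n)}$. We read $X_l$ as the weighted space with norm $\|f\|_{X_l}=\sup_{r\ge1}|r^lf(r)|$, the analogue of $X$ with $n-2$ replaced by $l$. We also work under the standing assumption of (I), namely $k\ge (n-2)/n=:k_0>0$. This is automatic in the application, since $\mu\le1$ and $|\epsilon|$ is small, so $k$ is in fact large.

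\emph{Part (I).} Set $g(r):=(n-2)\log r-k(r^n-1)$, so that $r^{n-2}G(r,1)=e^{g(r)}$. Then $g(1)=0$ and
\[
g'(r)=\frac{n-2}{r}-knr^{n-1}\le\frac{n-2}{r}-(n-2)r^{n-1}\le0\qquad(r\ge1),
\]
where we used $kn\ge n-2$. Hence $g\le0$ on $[1,\infty)$, which gives (\ref{hyoka1}).

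\emph{Part (II).} Since $|f(s)|\le\|f\|_{X_l}s^{-l}$, it suffices to show
\[
J(r):=\int_1^r e^{-k(r^n-s^n)}s^{-l}\,ds\le \frac{C}{k}\,r^{-(l+n-1)},
\]
because $1/k=\mu|\epsilon|/\omega$. The key device is to insert the factor $ns^{n-1}$ and integrate exactly:
\[
\int_a^r e^{-k(r^n-s^n)}ns^{n-1}\,ds\le\frac1k .
\]
We split into two ranges of $r$.

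For $1\le r\le2$, we have $s^{-l}\le C$ and $ns^{n-1}\ge n$, so $J(r)\le C/k$. This is the required bound, since $r$ is bounded.

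For $r\ge2$, we split the integral at $s=r/2$.
\begin{itemize}
\item On $[r/2,r]$: here $s^{-l}\le 2^{|l|}r^{-l}$ and $1\le (2/r)^{n-1}s^{n-1}$. The exact integration then gives a contribution of at most $C r^{-l-(n-1)}/k$.
\item On $[1,r/2]$: here $r^n-s^n\ge(1-2^{-n})r^n=:c\,r^n$, and $\int_1^{r}s^{-l}\,ds\le C r^{m}$ for some $m=m(l)$ (absorbing a possible logarithm). This piece is therefore bounded by $Cr^me^{-ckr^n}$. It remains to check that $k\,r^{m+l+n-1}e^{-ckr^n}\le C$ uniformly in $k\ge k_0$ and $r\ge1$. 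This holds because $k x e^{-ckx/2}$ is bounded and $k_0>0$, so $x^pe^{-ck_0x/2}$ is bounded for $x=r^n\ge1$.
\end{itemize}
Combining the two ranges gives $r^{l+n-1}J(r)\le C/k$, which is (\ref{ineqG}). The constant $C_0$ depends only on $n$, $l$, $\omega$ and $k_0$, and not on $\epsilon$ or $\mu$.

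The main obstacle is the far piece $[1,r/2]$. Its smallness relies entirely on $k$ being bounded below, so the proof must use the hypothesis of (I), or the smallness of $|\epsilon|\mu$, at this point. The role of the condition $l\ge-n$ is only to keep the powers $s^{-l}$ under control in the near piece; this needs no more than $l$ being a fixed real number.
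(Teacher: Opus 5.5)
Your proof is correct. The paper gives no proof of this lemma; it only quotes it from the author's earlier work, so there is no in-paper argument to compare against. Your argument is complete and self-contained:
\begin{itemize}
\item Part (I) follows from the monotonicity of $(n-2)\log r-k(r^n-1)$, using $kn\ge n-2$.
\item For part (II), the near/far split works. On $[r/2,r]$ you use the exact antiderivative $\int_a^r e^{-k(r^n-s^n)}ns^{n-1}\,ds=k^{-1}\bigl(1-e^{-k(r^n-a^n)}\bigr)$. On $[1,r/2]$ you use the uniform gap $r^n-s^n\ge(1-2^{-n})r^n$.
\end{itemize}

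Three remarks sharpen what you say at the end.

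\emph{The lower bound on $k$ is necessary for $l\ge 1$.} Your standing assumption $k\ge (n-2)/n$ is not just a convenience. As printed, (II) puts no restriction on $\epsilon$, and then it is false for $l\ge1$. Take $f(s)=s^{-l}$, so $\|f\|_{X_l}=1$, and $r=k^{-1/n}$. Then $G(r,s)\ge e^{-1}$ on $[1,r]$, so
\[
r^{l+n-1}\int_1^r G(r,s)f(s)\,ds\ \ge\ e^{-1}r^{l+n-1}\int_1^r s^{-l}\,ds .
\]
This is of order $k^{-1}\log(1/k)$ when $l=1$ and of order $k^{-1-(l-1)/n}$ when $l>1$. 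The right-hand side of (II) is only $C_0\omega/k$, so the inequality fails as $k\to0$, that is, as $\mu|\epsilon|\to\infty$.

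\emph{For $l<1$ no lower bound on $k$ is needed.} On $[1,r/2]$ you have $\int_1^{r/2}s^{-l}\,ds\le Cr^{1-l}$. The far piece is therefore bounded by $Cr^n e^{-ckr^n}\le C'/k$, using only that $ye^{-cy}$ is bounded. The paper invokes bounds of this type only with $l=0$ and $l=1-n$ (in Section 4), so those applications hold for all $\epsilon$ and $\mu$.

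\emph{The condition $l\ge -n$ is never used.} As you noticed, it plays no role in your argument. Under $k\ge k_0$ your proof gives the estimate for every fixed real $l$, with $C_0$ depending on $l$, $n$, $\omega$ and $k_0$.
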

\noindent
We now estimate \(V\) in (\ref{out2}) using Lemma \ref{hyokaG}. Applying Duhamel's principle to (\ref{out2}), we obtain
\begin{align}
\label{intchi}
\begin{aligned}
V(r)&=-\int_r^{\infty}e^{-\int_{r}^{s}\frac{a(\tau)}{\epsilon\mu}\tau^{n-1}d\tau}(-\eta^{E}_r+F[V,\Theta])(s)\,ds\\
&=-\int_r^{\infty}e^{-\int_{r}^{s}\frac{a(\tau)}{\epsilon\mu}\tau^{n-1}d\tau}
(-\eta^{E}_r+I_1+I_2+I_3+I_4+I_5+I_6+I_7)(s)\,ds.
\end{aligned}
\end{align}
By the assumption $\tilde p_v<0\ (v>0)$, Theorem \ref{rmt}, and (\ref{decayE1}), 
it is easy to see that if $|u_b|+|\chi_b|$ is suitably small, there exists a positive constant 
$\delta$, independent of \(\mu\), such that
\begin{equation*}
-a(r) \ge \delta, \qquad r\ge 1.
\end{equation*}
Therefore, using the estimates for \(\eta^E\) and \(\chi^E\) obtained in Theorem \ref{rmtE}, we estimate the first, second, and third terms on the right-hand side of (\ref{intchi}) as follows: 
\begin{align}
\label{chiu1}
\begin{aligned}
&|\int_r^{\infty}e^{-\int_{r}^{s}\frac{a(\tau)}{\epsilon\mu}\tau^{n-1}d\tau}\eta^{E}_r(s)\, ds| \le  \int_r^{\infty}e^{-\frac{v(1)\delta}{|u_b|\mu}(s-r)}|\eta^{E}_r(s)|\, ds \\
&\qquad \quad \le \int_r^{\infty}e^{-\frac{v(1)\delta}{|u_b|\mu}(s-r)}ds 
\cdot \sup_{s\ge r}|\eta^{E}_r(s)| 
\le \frac{|u_b|\mu}{v(1)\delta}\cdot \sup_{s\ge r}|\eta^{E}_r(s)|
\le \ C \mu/r^{n-1}, \\
&|\int_r^{\infty}e^{-\int_{r}^{s}\frac{a(\tau)}{\epsilon\mu}\tau^{n-1}d\tau}I_1(s)\,ds| \\
& \le  \frac{|v(1)|}{\mu |u_b|}\int_r^{\infty}e^{-\frac{v(1)\delta}{|u_b|\mu n}(s^n-r^n)}s^{n-1}|\left(\tilde p(v_++\eta^{E}, \theta_++\chi^E+\Theta)-\tilde p(v_++\eta^{E}, \theta_++\chi^E)\right)| \,ds \\
&\quad \le  \frac{C|v(1)|}{\mu |u_b|}\int_r^{\infty}e^{-\frac{v(1)\delta}{|u_b|\mu n}(s^n-r^n)} s^{n-1} \  ds \cdot \sup_{r\ge 1}|r^{n-2}\Theta(r)|/r^{n-2} \ \le C\sup_{r\ge 1}|r^{n-2}\Theta(r)|/r^{n-2},\\
&|\int_r^{\infty}e^{-\int_{r}^{s}\frac{a(\tau)}{\epsilon\mu}\tau^{n-1}d\tau}I_2(s)\,ds| \\
& \le  \frac{|V(1)|}{\mu |u_b|}\int_r^{\infty}e^{-\frac{v(1)\delta}{|u_b|\mu n}(s^n-r^n)}s^{n-1}|\left(\tilde p(v_++\eta^{E}, \theta_++\chi^E)-\tilde p(v_+, \theta_+)\right)| \,ds \\
&\quad \le  \frac{C\sup_{r\ge 1}|r^{n-2}V(r)|}{\mu |u_b|}\int_r^{\infty}e^{-\frac{v(1)\delta}{|u_b|\mu n}(s^n-r^n)} s^{n-1}ds \cdot \sup_{r\ge 1}r^{n-2}|(|\eta^E(r)|+|\chi^E(r)|)/r^{n-2} \\
&\quad \le C(|u_b|+|\chi_b|)\sup_{r\ge 1}|r^{n-2}V(r)|/r^{n-2},
\end{aligned}
\end{align}
By performing the same calculations on the remaining terms on the right-hand side of (\ref{intchi}), we can obtain
\begin{align}
\label{chiu3}
\begin{aligned}
&|\int_r^{\infty}e^{-\int_{r}^{s}\frac{a(\tau)}{\epsilon\mu}\tau^{n-1}d\tau}
(I_3+I_4+I_5+I_6+I_7)(s)\, ds| \le C|u_b|\sup_{r\ge 1}|V(r)|/r^{n-2}.
\end{aligned}
\end{align}
Substituting (\ref{chiu1}) and (\ref{chiu3}) into (\ref{intchi}), we obtain the estimate 
\begin{align}
\label{chiu4}
\sup_{r\ge 1}|r^{n-2}V(r)|\le C\mu+C\sup_{r\ge 1}|r^{n-2}\Theta(r)|+C(|u_b|+|\chi_b|)\sup_{r\ge 1}|r^{n-2}V(r)|.
\end{align}
Next, we estimate \(|\Theta(r)|\).  
Integrating the second equation of (\ref{reforme}) over \([r,\infty)\), we obtain
\begin{align}
\label{chiintout}
 \left\{
\begin{aligned}
&\chi(r) = \dfrac{\alpha}{\kappa(n-2)r^{n-2}}+H(\eta,\eta_r,\chi)(r),
\\
&\qquad \alpha = \kappa(n-2)\chi_b-\kappa(n-2)
H(\eta,\eta_r,\chi)(1),\\[10pt]
&\qquad \quad H(\eta,\eta_r,\chi)(r):= -\int_r^\infty \dfrac{1}{\kappa \tau^{n-1}}
\Big\{{\epsilon c_P}\chi + 
\frac{\epsilon^3}{2}\frac{(v_++\eta)^2}{\tau^{2(n-1)}}\\
&\hspace{5cm} -\epsilon^2\mu \dfrac{(v_++\eta)\eta_r}{\tau^{(n-1)}}
+\frac{2\e^2\nu(n-1)(v_++\eta)^2}{\tau^n}\Big\}\,d\tau.
\end{aligned}
\right.
\end{align}
Subtracting (\ref{chi^Eintout}) from (\ref{chiintout}), we obtain\\
\begin{align}
\label{ThetaE1}
\begin{aligned}
\Theta(r)=-\dfrac{H(\eta,\eta_r,\chi)(1)-H^E(\eta^E,\chi^E)(1)}{r^{n-2}}+H(\eta,\eta_r,\chi)(r)-H^E(\eta^E,\chi^E)(r),
\end{aligned}
\end{align}
where
\begin{align}
\label{H-H^E}
\begin{aligned}
&H(\eta,\eta_r,\chi)(r)-H^E(\eta^E,\chi^E)(r)\\
=&-\int_r^\infty \dfrac{1}{\kappa \tau^{n-1}}
\Big\{c_P(\epsilon\chi-\epsilon^E\chi^E) + 
\frac{\epsilon^3(v_++\eta)^2-(\epsilon^E)^3(v_++\eta^E)^2}{2\tau^{2(n-1)}}\\
&\hspace{4cm}-\epsilon^2\mu \dfrac{(v_++\eta)\eta_r}{\tau^{(n-1)}}
+\frac{2\e^2\nu(n-1)(v_++\eta)^2}{\tau^n}\Big\}\,d\tau\\
=:&J_1+J_2+J_3+J_4.
\end{aligned}
\end{align}
Noting (\ref{eaeE}), we have 
\begin{align}
\label{J1o}
\begin{aligned}
|J_1|&=\left|\frac{c_P}{\kappa}\int_r^\infty \dfrac{\epsilon\chi-\e^E\chi^E}{\tau^{n-1}}d\tau\right|=\left|\frac{c_P}{\kappa}\int_r^\infty \dfrac{\epsilon\Theta+(\e-\e^E)\chi^E}{\tau^{n-1}}d\tau\right|\\
&\le C|u_b|\left(\sup_{r\ge 1}|r^{n-2}\Theta(r)|+\sup_{r\ge 1}|r^{n-2}V(r)|\right)/r^{2(n-2)},
\end{aligned}
\end{align}
\begin{align}
\label{J2o}
|J_2|&=\left|\frac{1}{2\kappa}\int_r^\infty \dfrac{\epsilon^3(v_++\eta)^2-(\epsilon^E)^3(v_++\eta^E)^2}{ \tau^{3(n-1)}}d\tau\right|\le C|u_b|^3\sup_{r\ge 1}|r^{n-2}V(r)|/r^{3n-4}. 
\end{align}
By the definition of $F$ in (\ref{out2}), we obtain that 
\begin{align}
|F[V, \Theta](r)|\le \frac{C r^{n-1}}{|\e|\mu}\left(\sup_{r\ge 1}|r^{n-2}\Theta|+\sup_{r\ge 1}|r^{n-2}V(r)|\right),
\end{align}
which in turn yields 
\begin{align}
\label{V_rO}
|V_r|\le \frac{C r^{n-1}}{|\e|\mu}\left(\sup_{r\ge 1}|r^{n-2}\Theta|+\sup_{r\ge 1}|r^{n-2}V(r)|\right)+\frac{C(|u_b|+|\chi_b|)}{r^{n-1}}.
\end{align}
By combining (\ref{decayE22}) and (\ref{V_rO}), we obtain that 
\begin{align}
\label{J3o}
\begin{aligned}
|J_3|&=\left|\frac{\e^2\mu}{\kappa}\int_r^\infty \dfrac{(v_++\eta)\eta_r}{\tau^{2(n-1)}}d\tau\right|\le \left|\frac{\e^2\mu}{\kappa}\int_r^\infty \dfrac{(v_++\eta)(V_r+\eta^E_r)}{\tau^{2(n-1)}}d\tau\right|\\
&\le C|u_b|\left(\sup_{r\ge 1}|r^{n-2}\Theta|+\sup_{r\ge 1}|r^{n-2}V(r)|+\mu\right)/r^{n-2}, 
\end{aligned}
\end{align}
\begin{align}
\label{J4o}
|J_4|= \left|\frac{2\e^2\nu(n-1)}{\kappa}\int_r^\infty \frac{(v_++\eta)^2}{\tau^{2n-1}}d\tau\right|\le C\mu/r^{2(n-1)}.
\end{align}
Substituting (\ref{J1o}), (\ref{J2o}), (\ref{J3o}) and (\ref{J4o}) into (\ref{H-H^E}), we obtain  
\begin{align}
\begin{aligned}
&|H(\eta,\eta_r,\chi)(r)-H^E(\eta^E,\chi^E)(r)|\\
&\le C|u_b|\left(\sup_{r\ge 1}|r^{n-2}\Theta|+\sup_{r\ge 1}|r^{n-2}V(r)|+\mu\right)/r^{n-2}+C\mu/r^{n-2}.
\end{aligned}
\end{align}
Applying the above inequality to (\ref{ThetaE1}), we obtain that 
\begin{align}
\label{ThetaE1e}
\begin{aligned}
\sup_{r\ge 1}|r^{n-2}\Theta(r)|\le C|u_b|\left(\sup_{r\ge 1}|r^{n-2}\Theta|+\sup_{r\ge 1}|r^{n-2}V(r)|\right)+C\mu.
\end{aligned}
\end{align}
Finally, combining (\ref{chiu4}) and (\ref{ThetaE1e}), we obtain the following estimate.
\begin{align}
\label{outfinal}
\sup_{r\ge 1}|r^{n-2}V(r)|+\sup_{r\ge 1}|r^{n-2}\Theta(r)|\le C\mu+C(|u_b|+|\chi_b|)\left(\sup_{r\ge 1}|r^{n-2}\Theta|+\sup_{r\ge 1}|r^{n-2}V(r)|\right).
\end{align}
Hence, taking $|u_b|+|\chi_b|$ suitably small, we obtain
\begin{align}
\label{chiu5}
\sup_{r\ge 1}|V(r)|+\sup_{r\ge 1}|\Theta(r)|=
\sup_{r\ge 1}|(\eta-\eta^{E})(r)|+\sup_{r\ge 1}|(\chi-\chi^{E})(r)|
\le C\mu. 
\end{align}
Since
\[
 \rho=\frac{1}{v_++\eta}, \qquad
 \rho^E=\frac{1}{v_++\eta^E},
 \qquad
 \theta-\theta^E=\Theta,
\]
it follows from \eqref{chiu5} that
\[
 \sup_{r\ge1}|\rho(r)-\rho^E(r)|
 +\sup_{r\ge1}|\theta(r)-\theta^E(r)|
 \le C\mu.
\]
Finally, recalling that $u$ and $u^{E}$ are respectively given by
$$
u(r)=\frac{u_b(v_++\eta(r))}{r^{n-1}(v_++\eta(1))},\quad
u^{E}(r)=\frac{u_b(v_++\eta^{E}(r))}{r^{n-1}(v_++\eta^{E}(1))},\quad r\ge 1,
$$
we can also show
$$
\sup_{r\ge 1}|(u-u^{E})(r)| \le C\mu. 
$$
This completes the proof of statement (II) in Theorem \ref{etE}. \qquad $\Box$


\section{Inflow problem}
In this section, 
we consider the case \(u_b>0\) and prove the statement in (III) of Theorem \ref{bl+E}. 
That is, we show that the inviscid limit of the solution of the stationary problem (\ref{nrs0}) is described by a superposition of the solution of the inviscid problem (\ref{Es})  with boundary conditions \(\rho^E(r_0)u^E(r_0)=\rho_bu_b\) and \(\theta^E(r_0)=\theta_b\)  and the boundary-layer solution  of (\ref{Ebl}).

\subsection{Inviscid problem} 
\label{isi}
In this subsection, we only make a brief comment on the existence of a solution to the inviscid problem (\ref{Es}) corresponding to the inflow problem, since the argument is almost the same as that in Subsection 3.1.

In the inflow case, we seek a solution \((\rho^E,u^E,\theta^E)\) of (\ref{Es}) satisfying
\begin{align}
\label{4n1e}
r^{n-1}\rho^{E}(r) u^{E}(r)=\epsilon := \rho_b u_b, \qquad r\ge 1.
\end{align}
To construct the solution, we introduce \(\eta^E(r):=v^E(r)-v_+\). By the same argument as in the derivation of (\ref{uE})--(\ref{E2}), we consider the following integral equation for $\eta^E$:
\begin{align}
\label{4E2}
\begin{aligned}
\eta^{E}(r)&=\int_r^{\infty} \frac{s^{-(2n-1)}\epsilon^2(n-1)(v_++\eta^{E})-\ti{p}_{\theta}(v_++\eta^E, \theta_++\chi^E)\chi^E_s}{|\tilde p_v(v_++\eta^{E}, \theta_++\chi^E)|-\epsilon^2s^{-2(n-1)}} ds.
\end{aligned}
\end{align}
Moreover, proceeding as in the derivation of (\ref{ieEout}) and (\ref{chi^Eintout}), we see that the system we need to solve is equivalent to
\begin{align}
\label{4reformE}
 \left\{
\begin{aligned}
&\widetilde{p}(v_+ + \eta^E,\theta_+ + \chi^E)
-\widetilde{p}(v_+,\theta_+)\\
&\quad +\frac{\epsilon^2 v_+}{2r^{2(n-1)}}
+\frac{\epsilon^2\eta^E}{r^{2(n-1)}}
-\epsilon^2(n-1)
\int_r^\infty
\frac{\eta^E(s)}{s^{2n-1}}\,ds
=0,
\qquad r>1,\\[2mm]
&\chi^E_r=-\frac{\alpha^E}{\kappa r^{n-1}}+ H^E_0(\eta^E,\chi^E)(r), \quad r>1,\\[2mm]
&H^E_0(\eta^E,\chi^E)(r)=
\frac{\epsilon c_P}{\kappa r^{n-1}}\chi^E + 
\frac{\epsilon^3}{2}\frac{(v_++\eta^E)^2}{\kappa r^{3(n-1)}},\\
&\displaystyle{\lim_{r\to \infty}}\eta^E(r) =0,\quad \displaystyle{\lim_{r\to \infty}}\chi^E(r) =0, \quad \chi^E(1)= \chi_b,
\end{aligned}
 \right.
\end{align}
where we define
$H^E(\eta^E,\chi^E)(r):= -\int_r^\infty H_0^E(\eta^E,\chi^E)(\tau)\ d\tau$
and $\alpha^E$ is defined as
\begin{align}
\label{alphaE}
\alpha^E := \kappa(n-2)\chi_b-\kappa(n-2)H^E(\eta^E,\chi^E)(1),
\end{align}
so that $\chi^E(1)=\chi_b$. \\
By the same argument as in the proof of Theorem \ref{rmtE}, we obtain the following result for the reformulated problem (\ref{4E2}) and (\ref{4reformE}). 
\begin{thm} 
\label{rmtEI}
Let $n \ge 3$. Then, for any $v_+>0$, and  $\theta_+ >0$, there exist 
positive constants $\epsilon_0$ and $C$ such that if $|u_b|+|\rho_b-\rho_+|+|\chi_b| \le \epsilon_0$, 
there exists a unique smooth solution $(\eta^{E}, \chi^E)$ of the problem (\ref{4reformE}) satisfying
\begin{subequations}
\begin{align}
\label{4decayE1}
&|\eta^{E}(r)|,\ |\chi^{E}(r)|\le Cr^{-(n-2)}(|u_b|+|\chi_b|), \qquad r\ge 1, \\[2mm]
\label{decayE2}
&|\eta^{E}_r(r)|,\ |\chi^{E}_r(r)| \le Cr^{-(n-1)}(|u_b|+|\chi_b|), \ \ \qquad r\ge 1.
\end{align}
\end{subequations}
\end{thm}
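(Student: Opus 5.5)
We follow the scheme of the proof of Theorem \ref{rmtE}, noting the one structural simplification in the inflow case: the flux $\epsilon=\rho_bu_b=u_b/v_b$ is a fixed constant. It no longer depends on the unknown $\eta^E(1)$, as $\epsilon^E=u_b/(v_++\eta^E(1))$ did in the outflow case. Since $|\rho_b-\rho_+|\le\epsilon_0$ keeps $v_b$ comparable to $v_+$, we have $|\epsilon|\le 2|u_b|/v_+$. First we rewrite the first equation of (\ref{4reformE}) exactly as in (\ref{reformE-1})--(\ref{ieEout}), which gives
\[
\eta^E=\frac{v_+}{\theta_+}\chi^E+F(\eta^E,\chi^E),
\]
with $F$ of the same form as in (\ref{ieEout}) but with $\epsilon^E$ replaced by $\epsilon$. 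Integrating the $\chi^E$ equation over $[r,\infty)$ and using (\ref{alphaE}), we obtain
\[
\chi^E(r)=\frac{\chi_b-H^E(\eta^E,\chi^E)(1)}{r^{n-2}}+H^E(\eta^E,\chi^E)(r).
\]
We then define the iteration (\ref{appEout}) with $\epsilon^{E(m)}$ replaced by $\epsilon$, starting from zero, in the weighted space $X$.

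For the uniform bound, we show by induction that $\|\eta^{E(m)}\|_X,\|\chi^{E(m)}\|_X\le M=3C(|u_b|+|\chi_b|)$. The estimates for $m=0$ and the inductive step are identical to (\ref{appEout01})--(\ref{econdi2}), with $|\epsilon|\le C|u_b|$ used in place of (\ref{bound}). The only terms not carrying a factor of $M$ are $\chi_b$, the term $\epsilon^3\int s^{-3(n-1)}$ (of size $\le C|u_b|^3$), and $\epsilon^2 v_+^3/(2R\theta_+r^{2(n-1)})$ (of size $\le C|u_b|^2 r^{-(n-2)}$). This explains why the final bound involves only $|u_b|+|\chi_b|$: $\eta_b$ enters only through $\epsilon$. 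We keep $M\le v_+/2$ so that the denominators $v_++\eta$ stay away from zero. For the contraction step, the difference estimates are in fact easier than in the outflow case, because the term $\epsilon^{(m)}-\epsilon^{(m-1)}$ is absent. The nonlinear terms are quadratic or carry a factor $\epsilon$, so we get $D_{m+1}\le C(|u_b|+|\chi_b|)D_m\le \frac12 D_m$. The limit therefore solves the system and satisfies (\ref{4decayE1}). Uniqueness in a small ball of $X\times X$ follows from the same contraction inequality applied to two solutions. Smoothness follows by bootstrapping: $\chi^E$ is $C^1$ from its integral equation, and then $\eta^E$ is $C^1$ from (\ref{4E2}), and so on.

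For (\ref{decayE2}), we use the differential forms of the equations. The $\chi^E$ equation gives
\[
|\chi^E_r|\le |\alpha^E|/(\kappa r^{n-1})+C|u_b|\,|\chi^E|r^{-(n-1)}+C|u_b|^3r^{-3(n-1)}.
\]
By (\ref{alphaE}), $|\alpha^E|\le C(|u_b|+|\chi_b|)$, so $|\chi^E_r|\le C(|u_b|+|\chi_b|)r^{-(n-1)}$. Next, (\ref{4E2}) in differentiated form, i.e. the analogue of (\ref{E_r2}), expresses $\eta^E_r$ with numerator $\epsilon^2 r^{-(2n-1)}(\cdots)-\tilde p_\theta\chi^E_r$ and denominator
\[
|\tilde p_v|-\epsilon^2r^{-2(n-1)}\ge \frac{R\theta_+}{2v_+^2}
\]
for small data (this is the subsonic condition). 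Hence $|\eta^E_r|\le C(|u_b|+|\chi_b|)r^{-(n-1)}$.

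The main point requiring care is the equivalence between the integral form (\ref{4E2}) and the algebraic form in (\ref{4reformE}), together with the positivity of the denominator along the iteration. We handle this exactly as in Subsection 3.1: the algebraic form is obtained by integrating the momentum equation from infinity, and differentiating it recovers (\ref{4E2}) whenever the denominator is positive, which is guaranteed by $M\le v_+/2$ and the smallness of $|u_b|$.
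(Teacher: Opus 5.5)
Your proposal is correct and follows the paper's approach. The paper's proof consists only of the remark that one repeats the iteration \eqref{appEout} and the contraction argument from the proof of Theorem~\ref{rmtE}, the sole change being that the flux $\epsilon=\rho_bu_b$ is now a fixed constant, so the $\epsilon^{E(m)}$-difference terms disappear and \eqref{decayE2} follows from the differentiated momentum and energy equations. Your write-up supplies the details the paper omits, including the useful observation that $\eta_b$ enters only through $|\epsilon|\le C|u_b|$, which is why the bounds involve only $|u_b|+|\chi_b|$.
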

The proof is essentially the same as that of Theorem~\ref{rmtE}.
Indeed, the only difference is that, in the present inflow problem,
$\epsilon=\rho_bu_b$ is a fixed constant, whereas in the outflow problem
$\epsilon^{E}$ depends on the boundary value $\eta^E(1)$.
Thus, repeating the same iteration and contraction argument as in the
proof of Theorem~\ref{rmtE}, we obtain the desired estimates. 
We therefore omit the details. 

Once Theorem \ref{rmtEI} is proved, statement (I) in Theorem \ref{bl+E} follows from (\ref{4n1e}) together with the definitions of \(\rho^E\) and \(\theta^E\).

\subsection{Boundary layer solution} 
We derive the boundary layer equation and construct its solution by introducing a new independent variable \(y\) defined by
\begin{align}
\label{bltrans}
y:=\frac{r-1}{\mu}.
\end{align}
By applying the transformation \eqref{bltrans} to equations
\eqref{Narrange} and \eqref{enea} and formally letting $\mu\to0$
in the resulting equations, we obtain the following boundary layer equations:
\begin{align}
\label{Narrange2}
 \left\{
\begin{aligned}
&\epsilon \eta^B_{yy}=\tilde p(v_++\eta^B, \theta_++\chi^B)_y+\epsilon^2 \eta^B_y,\qquad y\ge 0,\\
&\chi^B_{yy}=0,\qquad y\ge 0.
\end{aligned}
\right.
\end{align}
We seek a solution \((\eta^B,\chi^B)\) satisfying the boundary and far-field conditions. Integrating (\ref{Narrange2}) over $[y, \infty)$ 
under the conditions $\displaystyle{\lim_{y\to \infty}}\eta^B(y) =\eta^{E}(1)$ , $\displaystyle{\lim_{y\to \infty}} \eta^B_y =0$ and $\displaystyle{\lim_{y\to \infty}}\chi^B(y) =\chi^{E}(1)=\chi_b$,  
where $\eta^E(1)$ and $\chi^E(1)$ are the boundary values of the
inviscid solution obtained in Subsection \ref{isi}, we obtain
\begin{align}
\label{Narrange3}
 \left\{
\begin{aligned}
  &\eta^B_y= 
\frac{1}{\epsilon}\left(\tilde p(v_++\eta^B(y), \theta_++\chi^B(y))- \tilde p(v_++\eta^{E}(1), \theta_++\chi^E(1))\right)\\
&\hspace{6cm}+\e (\eta^B(y)-\eta^{E}(1)), \quad {y \ge 0}, \\
&\chi^B(y)\equiv \chi_b, \quad {y \ge 0},  \\[2mm]
 &\displaystyle{\lim_{y\to \infty}}\eta^B(y) =\eta^{E}(1), 
 \quad \eta^B(0)= \eta_b:=v_b-v_+,
 \end{aligned}
 \right.
\end{align}
where $\e=u_b/v_b$. 
We call (\ref{Narrange3}) the boundary-layer equation and its solution the boundary-layer solution. 
The following theorem gives the existence and uniqueness of the boundary layer solution to (\ref{Narrange3}). 
\begin{thm} 
\label{blexistB}
Let $n \ge 3$. Then, for any $v_+>0$ and $\theta_+>0$, there exist positive constants 
$\epsilon_0$, $\nu_0$ and $C$ such that if $|u_b|+|\eta_b-\eta^{E}(1)|+|\chi_b| \le \epsilon_0$, 
there exists a unique smooth solution \((\eta^B,\chi^B)\) of the problem {\rm (\ref{Narrange3})} satisfying
\begin{align}
\label{4decaybl}
|\eta^B(y)-\eta^{E}(1)|\le C|\eta_b-\eta^{E}(1)|
\ e^{-\frac{\nu_0}{\e}y}, \qquad \chi^B(y)= \chi_b, \quad {y \ge 0}, 
\end{align}
\end{thm}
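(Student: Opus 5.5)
Since $\chi^B\equiv\chi_b$ is forced by $\chi^B_{yy}=0$ together with $\chi^B(0)=\chi_b$ and the far-field condition $\chi^B\to\chi_b$, the problem (\ref{Narrange3}) reduces to a scalar autonomous ODE
\[
\eta^B_y=g(\eta^B),\qquad g(\eta):=\frac{1}{\e}\Bigl(\tilde p(v_++\eta,\theta_b)-\tilde p(v_++\eta^E(1),\theta_b)\Bigr)+\e\,(\eta-\eta^E(1)).
\]
The plan is a phase-line argument. First, $g(\eta^E(1))=0$. Second, for $\eta$ in a neighborhood $|\eta-\eta^E(1)|\le v_+/4$,
\[
g'(\eta)=\frac{\tilde p_v(v_++\eta,\theta_b)}{\e}+\e=-\frac{R\theta_b}{\e(v_++\eta)^2}+\e .
\]
By Theorem \ref{rmtEI}, $|\eta^E(1)|$ and $|\chi_b|$ are small, so $R\theta_b/(v_++\eta)^2\ge c_1>0$ with $c_1$ depending only on $v_+,\theta_+$. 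With $\e=u_b/v_b$ small (so $\e^2\le c_1/2$), this gives $g'(\eta)\le -\frac{c_1}{2\e}=: -\frac{2\nu_0}{\e}$. Thus $\eta^E(1)$ is a strictly attracting (subsonic) equilibrium, and on this interval
\[
g(\eta)(\eta-\eta^E(1))\le -\frac{2\nu_0}{\e}(\eta-\eta^E(1))^2 .
\]

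Existence follows by solving the initial value problem with $\eta^B(0)=\eta_b$, where $|\eta_b-\eta^E(1)|\le\epsilon_0\le v_+/4$. Set $W:=\eta^B-\eta^E(1)$. Then $\frac{d}{dy}W^2=2gW\le -\frac{4\nu_0}{\e}W^2$ as long as the solution stays in the interval. This shows $|W|$ is nonincreasing, so the solution never leaves the interval, exists globally, and satisfies $|W(y)|\le|W(0)|e^{-2\nu_0 y/\e}$. This gives (\ref{4decaybl}) even with a better constant. Alternatively, since the ODE is scalar, $W$ keeps its sign, and one can integrate $dW/g$ directly. Smoothness follows from smoothness of $g$.

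For uniqueness, the far-field condition excludes any solution whose trajectory is not attracted to $\eta^E(1)$, and the initial condition fixes the trajectory. Hence the solution is unique among solutions tending to $\eta^E(1)$. Strictly, uniqueness of the full problem also requires excluding solutions that start at $\eta_b$, leave the neighborhood, and return. This is impossible for a scalar autonomous ODE: the solution is monotone, and since $g$ has no other zero in the neighborhood, it can only approach the nearest zero in its direction of motion, which is $\eta^E(1)$.

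The main obstacle is making the constants uniform. Specifically, $\nu_0$ must depend only on $v_+,\theta_+$ (not on $\e$ or the data), and this requires smallness of $\e$ relative to $c_1$. This is exactly the subsonic condition $\e^2<|\tilde p_v|$, and it also guarantees that the denominators stay positive.
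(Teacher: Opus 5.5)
Your proof is correct, but it takes a different route from the paper. The paper sets $\hat\eta=\eta^B-\eta^E(1)$ and linearizes at $v_*=v_++\eta^E(1)$, obtaining $\hat\eta_y+a_\epsilon\hat\eta=\epsilon^{-1}N[\hat\eta]$ with $a_\epsilon=-\tilde p_v(v_*,\theta_b)/\epsilon-\epsilon\ge\nu_0/\epsilon$ and $N[\hat\eta]=O(\hat\eta^2)$. It rewrites this via Duhamel's formula as an integral equation and solves it by a contraction argument in the exponentially weighted space $Y$ with norm $\|f\|_Y=\sup_{y\ge0}e^{\nu_0y/\epsilon}|f(y)|$; the details are deferred to the earlier barotropic paper.

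You instead use that the reduced problem is a scalar autonomous ODE $\eta_y=g(\eta)$ with $g(\eta^E(1))=0$ and $g'\le-2\nu_0/\epsilon$ on a fixed neighborhood of $\eta^E(1)$. The mean value theorem turns this into the Lyapunov inequality $\frac{d}{dy}W^2\le-\frac{4\nu_0}{\epsilon}W^2$. That single inequality gives invariance of the neighborhood, global existence, and the exponential decay (even with $C=1$).

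What your route buys: it is self-contained and needs no quadratic bound on $N$. The smallness assumptions are used only to make $R\theta_b/(v_++\eta)^2-\epsilon^2$ uniformly positive along the profile, i.e. a uniform subsonic condition. So $|\eta_b-\eta^E(1)|$ only has to be bounded by a fixed fraction of $v_+$, not by a contraction threshold. It also gives uniqueness among all solutions, not just within a small ball of $Y$.

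What the paper's route buys: the linearize-and-contract scheme is the same machinery used elsewhere in the paper. It does not rely on the scalar autonomous structure, so it would survive coupling to further unknowns or $y$-dependent perturbations.

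One simplification: your discussion of trajectories that leave the neighborhood and return is unnecessary. Since $g$ is smooth on $\{\eta>-v_+\}$, uniqueness for the initial value problem at $y=0$ already forces any solution with $\eta^B(0)=\eta_b$ to coincide with your global one on its whole interval of existence.
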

Once Theorem \ref{blexistB} is proved, since \(\rho^B\), \(u^B\), and \(\theta^B\) defined by 
$$
{\rho}^B(y) := \frac{1}{v_++\eta^B(y)},\quad
{u}^B(y) := \frac{u_b}{v_b}(v_++\eta^B(y)), \quad \theta^B(y):=\theta_b 
$$
 give a solution of (\ref{Ebl}), statement (II) of Theorem \ref{bl+E} follows.

\medskip
\noindent
{\it Proof of Theorem \ref{blexistB}.}\qquad We first rewrite the first equation of (\ref{Narrange3}) as 
\begin{align}
\label{Narrange4}
\begin{aligned}
  &(\eta^B-\eta^{E}(1))_y-\frac{\tilde p_v(v_*,\theta_b)}{\e}(\eta^B-\eta^{E}(1))\\
&=\frac{1}{\epsilon}\left\{\tilde p(v_++\eta^B(y), \theta_b)- \tilde p(v_*, \theta_b)-\tilde p_v(v_*, \theta_b)(\eta^B-\eta^{E}(1))\right\}+\e (\eta^B(y)-\eta^{E}(1)),
 \end{aligned}
\end{align}
where $v_*=v_++\eta^{E}(1)$. 
We introduce a new unknown function \(\hat\eta\) as the deviation of \(\eta^B\) from the far-field state \(\eta^E(1)\) by
\begin{equation}
\label{bareta}
\hat \eta(y) := \eta^B(y) - \eta^{E}(1),\qquad y \ge 0.
\end{equation}
Substituting (\ref{bareta}) into (\ref{Narrange4}), we obtain
\begin{align}
\label{Narrange5}
 \left\{
\begin{aligned}
 &\hat \eta_y+a_\e \hat \eta=\frac{1}{\e}N[\hat{\eta}], \quad y\ge 0,\\[2mm]
 &\displaystyle{\lim_{y\to \infty}}\hat \eta(y) =0, \quad \hat \eta(0)= \eta_b - \eta^{E}(1),
 \end{aligned}
 \right.
\end{align}
where 
\begin{align*}
\begin{aligned}
a_\e := -\frac{\tilde p_v(v_*, \theta_b)}{\e}-\e,\qquad N[\hat{\eta}]:=\tilde p(v_*+\hat{\eta}, \theta_b)- \tilde p(v_*, \theta_b)-\tilde p_v(v_*, \theta_b)\hat{\eta}.
\end{aligned}
\end{align*}
Applying Duhamel's principle to (\ref{Narrange5}), we obtain
\begin{align}
\label{Narrange6}
\hat \eta(y)=e^{-a_{\e}y}\hat \eta(0)
+\frac{1}{\e}\int_0^y e^{-a_{\e}(y-s)}N[\hat \eta](s)\,ds.
\end{align}
\noindent
Since $\widetilde p_v(v_+,\theta_+)<0$ and
\[
|\eta^E(1)|\le C(|u_b|+|\chi_b|),
\]
we see that $v_*=v_++\eta^E(1)$ and $\theta_b=\theta_++\chi_b$
remain sufficiently close to $v_+$ and $\theta_+$, respectively,
provided that the boundary data are sufficiently small.
Hence, by the continuity of $\widetilde p_v$, there exist positive
constants $\epsilon_0$ and $\nu_0$ such that, if
\[
|u_b|+|\eta_b-\eta^E(1)|+|\chi_b|\le\epsilon_0,
\]
then
\[
a_\epsilon\ge \frac{\nu_0}{\epsilon}.
\]
We therefore seek a solution of \eqref{Narrange6} in the Banach space
$Y$, equipped with the norm $\|\cdot\|_Y$ defined by
\begin{align*}
Y:=\{g \in C([0,\infty));\ \sup_{y\ge 0}|e^{\frac{\nu_0}{\e}y} g(y)|< \infty \},
\quad \|g\|_Y = \sup_{y\ge 0}|e^{\frac{\nu_0}{\e}y}g(y)|.
\end{align*}
Since $N[\hat{\eta}]=O(\hat{\eta}^2)$, the remaining argument for
constructing the desired solution of \eqref{Narrange6} in $Y$ for
sufficiently small $\hat{\eta}(0)$ is the same as in our previous
paper~\cite{H-M}.
We refer the reader to \cite{H-M} for the details. $\Box$ 

We now change the variable from $y$ back to $r$
for the boundary-layer solution $(\eta^B(y),\chi^B(y))$ and define
$$
{\bar \eta}(r) = {\eta}^B(\frac{r-1}{\mu}) - \eta^{E}(1),\quad \bar{\chi}(r):=\chi^B(\frac{r-1}{\mu})-\chi^E(1).
$$
The boundary layer equation  (\ref{Narrange3}) is rewritten in terms of ${\bar \eta}$ in the form
\begin{align}
\label{reformBr}
 \left\{
\begin{aligned}
  &\bar{\eta}_r- 
\frac{\left(\ti{p}(\bar{\eta}(r)+\eta^E(1)+v_+,\theta_b)-\ti{p}(\eta^E(1)+v_+,\theta_b)\right)}{\e\mu}-\frac{\e}{\mu}\bar{\eta}=0, \quad {r \ge 1}, \\[2mm]
 &\bar{\chi}(r)\equiv 0, \\
 &\displaystyle{\lim_{r\to \infty}}\bar{\eta}(r) =0, 
 \quad \bar{\eta}(1)= \eta_b-\eta^{E}(1),
 \end{aligned}
 \right.
\end{align}
where we recall that $\epsilon = u_b/v_b>0$.  

Theorem \ref{blexistB} can be equivalently restated in terms of \(r\) as follows. 
\begin{cor} 
\label{blexist}
Let $n \ge 3$. Then, for any $v_+>0$ and $\theta_+>0$, there exist positive constants 
$\epsilon_0$, $\nu_0$, and $C$ such that if $|u_b|+|\eta_b-\eta^{E}(1)|+|\chi_b| \le \epsilon_0$, 
there exists a unique smooth solution $\bar{\eta}$ of the problem {\rm (\ref{reformBr})} satisfying
\begin{align}
\label{4decaybl}
|\bar{\eta}|\le C|\eta_b-\eta^{E}(1)|
\ e^{-\nu_0\frac{r-1}{\e\mu}}, \qquad r\ge 1. 
\end{align}
\end{cor}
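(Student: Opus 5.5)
Corollary \ref{blexist} follows from Theorem \ref{blexistB} by the change of variables $y=(r-1)/\mu$; no new estimate is needed. Let $(\eta^B,\chi^B)$ be the solution of (\ref{Narrange3}) given by Theorem \ref{blexistB}, under the same smallness condition $|u_b|+|\eta_b-\eta^E(1)|+|\chi_b|\le\epsilon_0$ and with the same $\nu_0$ and $C$. Set $\bar\eta(r):=\eta^B((r-1)/\mu)-\eta^E(1)$, which is the definition given just before (\ref{reformBr}).

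\textbf{Existence.} By the chain rule, $\bar\eta_r(r)=\mu^{-1}\eta^B_y((r-1)/\mu)$. Since $\chi^B\equiv\chi_b$, the temperature argument in (\ref{Narrange3}) is $\theta_++\chi_b=\theta_b$. Substituting into the first equation of (\ref{Narrange3}) and dividing by $\mu$ gives exactly the first equation of (\ref{reformBr}). The boundary conditions transfer directly:
\begin{itemize}
\item $\bar\eta(1)=\eta^B(0)-\eta^E(1)=\eta_b-\eta^E(1)$;
\item $\bar\eta(r)\to0$ as $r\to\infty$, because $(r-1)/\mu\to\infty$;
\item $\bar\chi\equiv0$, because $\chi^B\equiv\chi_b=\chi^E(1)$.
\end{itemize}
Smoothness of $\bar\eta$ is inherited from that of $\eta^B$.

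\textbf{Decay estimate.} Evaluating (\ref{4decaybl}) of Theorem \ref{blexistB} at $y=(r-1)/\mu$ gives
\[
|\bar\eta(r)|\le C|\eta_b-\eta^E(1)|\,e^{-\nu_0\frac{r-1}{\epsilon\mu}},\qquad r\ge1.
\]
The constants are independent of $\mu$ because $a_\epsilon\ge\nu_0/\epsilon$ depends only on $\tilde p_v$ near $(v_+,\theta_+)$ and on $\epsilon$.

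\textbf{Uniqueness.} Reverse the map. If $\bar\eta$ solves (\ref{reformBr}), define $\eta^B(y):=\bar\eta(1+\mu y)+\eta^E(1)$ and $\chi^B\equiv\chi_b$. Then $(\eta^B,\chi^B)$ solves (\ref{Narrange3}). If $\bar\eta$ also satisfies the stated exponential bound, then $\hat\eta=\eta^B-\eta^E(1)$ lies in $Y$, so uniqueness follows from the uniqueness part of Theorem \ref{blexistB}.

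The only point needing care is that Theorem \ref{blexistB}'s uniqueness is proved within $Y$. I would therefore state uniqueness among solutions satisfying the decay bound, or among solutions small in sup norm. In the latter case, note that solutions of the scalar ODE (\ref{Narrange5}) with small data that tend to zero must decay at the linear rate $a_\epsilon$, and so they belong to $Y$.
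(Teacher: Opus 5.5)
Your proposal is correct and is the paper's argument: the paper obtains the corollary by restating Theorem \ref{blexistB} under the change of variables $y=(r-1)/\mu$, and your verification of the equation, boundary conditions and decay bound is exactly that. Your caveat about uniqueness is more cautious than it needs to be. Since (\ref{reformBr}) is a scalar first-order ODE with smooth right-hand side and prescribed value $\bar\eta(1)=\eta_b-\eta^E(1)$, uniqueness holds among all its solutions by standard ODE theory, with no restriction to the weighted class $Y$. The condition $v_++\eta^E(1)+\bar\eta>0$ is needed anyway for the equation to make sense.
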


\noindent
Combining (\ref{reformBr}) and (\ref{4decaybl}), we obtain
\begin{align}
\label{decayetar}
|\bar{\eta}_r| \le C|\eta_b-\eta^{E}(1)|\frac{e^{-\nu_0\frac{r-1}{\e\mu}}}{\e\mu}.
\end{align}
The estimates \eqref{4decaybl} and \eqref{decayetar}
will be used in the proof of the zero-viscosity limit
in the next subsection.

\subsection{Zero-viscosity limit for the inflow problem }
In this subsection, we prove statement (III) of Theorem \ref{bl+E}. 
To prove the assertion for $\rho$ in statement (III) of
Theorem~\ref{bl+E}, it suffices to show that the solution $\eta$ of
\eqref{eta_r} 
converges uniformly to the sum of the solutions of (\ref{4reformE}) and (\ref{reformBr}), namely, \(\eta^E+\bar\eta\), as
$\mu \to 0$. 
To this end, we use the fact that, for any $\alpha\in(0,1)$,
\[
|\bar{\eta}(r)|
\le
C\exp\left\{-\frac{\nu_0\mu^{\alpha-1}}{\epsilon}\right\},
\qquad
r\ge 1+\mu^\alpha,
\]
which follows from \eqref{4decaybl}.
We divide the interval $[1,\infty)$ into
\[
I_b:=[1,1+\mu^\alpha]
\quad\text{and}\quad
I_\infty:=[1+\mu^\alpha,\infty).
\]
We first show that $\eta$ converges uniformly to $\eta^E$
on $I_\infty$, and then to $\eta^E+\bar{\eta}$ on $I_b$,
as $\mu\to0$.

\medskip

\noindent
\underline{\bf Zero-viscosity limit on $I_{\infty}$}

\smallskip

We define new variables $V$ and $\Theta$ by
\[
V(r):=\eta(r)-\eta^E(r),\qquad
\Theta(r):=\chi(r)-\chi^E(r).
\]
Subtracting \eqref{4reformE} from \eqref{reforme}, we obtain
the following system for $(V,\Theta)$:
\begin{align}
\label{inf0}
 \left\{
\begin{aligned}
&V_r+\frac{r^{n-1}}{\epsilon\mu}a(r)V= 
-\eta^{E}_r+F,\quad r > 1,\\[2mm]
&\Theta_r-\frac{\e c_P}{\kappa r^{n-1}}\Theta=N[V,\Theta](r),\quad r > 1,\\[2mm]
&\displaystyle{\lim_{r\to \infty}}V(r) =0,\ V(1) = \eta_b-\eta^{E}(1),\ \displaystyle{\lim_{r\to \infty}}\Theta(r) =0,\ \Theta(1) = 0,\\[2mm]
&a := -\frac{\tilde p(v_++\eta^{E}+V,\chi^E+\theta_++\Theta)-\tilde p(v_++\eta^{E},\chi^E+\theta_++\Theta)}{V}-\frac{\e^2}{r^{2(n-1)}},\\ 
&F[V,\Theta]:=-\frac{\e (n-1)r^{n-1}}{\mu}\int_r^{\infty}\frac{V(s)}{s^{2n-1}}ds\\
&\hspace{3cm}+\frac{r^{n-1}}{\epsilon\mu}(\ti{p}(\eta^E+v_+,\chi^E+\theta_++\Theta)-\ti{p}(\eta^E+v_+,\chi^E+\theta_+)),\\
&N[V,\Theta](r):=-\frac{\al -\al^E}{\kappa r^{n-1}}+\frac{\e^3}{2\kappa r^{3(n-1)}}(2v_++V+2\eta^E)V\\
&\hspace{3cm} -\e^2\mu\frac{(v_++V+\eta^E)(V+\eta^E)_r}{\kappa r^{2(n-1)}}+
2\e^2\nu(n-1)\frac{(v_++V+\eta^E)^2}{\kappa r^{2n-1}}.
\end{aligned}
 \right. 
\end{align}
Using the assumption \(\widetilde p_v(v,\theta)<0\) for \(v>0\) together with 
Theorem \ref{rmtEI}, 
we see that, for suitably small \(|u_b|+|v_b-v_+|+|\theta_b-\theta_+|\), there exists a positive constant \(\delta\), independent of \(\mu\), such that
\[
a(r)\ge\delta,\qquad r\ge1.
\] 
We have the following theorem for the reformulated problem (\ref{inf0}). 
\begin{thm} 
\label{ivli0}
Let $n \ge 3$ and $\al \in (0,1)$. Then, for any $v_+>0$ and $\theta_+>0$, there exist positive constants $\epsilon_0$ 
and $C$, independent of $\mu\in (0,1]$, 
such that if $|u_b|+|v_b-v_+|+|\theta_b-\theta_+| \le \epsilon_0$, 
the solution \((V,\Theta)\) of (\ref{inf0}) satisfies
\begin{align}
\label{invsidlimitinf1}
\begin{aligned}
\sup_{r\ge 1+\mu^{\alpha}}|V(r)|+\sup_{r\ge 1+\mu^{\alpha}}|\Theta(r)| \le  C\mu^{\al}.
\end{aligned}
\end{align}
\end{thm}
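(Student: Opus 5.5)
The plan is to mimic the outflow argument of Subsection 3.2. The new feature is that $V(1)=\eta_b-\eta^E(1)$ is $O(1)$ relative to $\mu$. The viscous solution therefore does not track $\eta^E$ near $r=1$, and the $V$-equation must be integrated from the boundary rather than from infinity.

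\textbf{Step 1: Duhamel formula for $V$ from the inner boundary.} In the inflow case $\epsilon>0$ and $a\ge\delta$, so the operator $V_r+\frac{r^{n-1}}{\epsilon\mu}aV$ is damping in the forward direction. I would write
\[
V(r)=e^{-\int_1^r\frac{a\tau^{n-1}}{\epsilon\mu}d\tau}V(1)+\int_1^r e^{-\int_s^r\frac{a\tau^{n-1}}{\epsilon\mu}d\tau}\bigl(-\eta^E_r+F[V,\Theta]\bigr)(s)\,ds .
\]
The kernel is bounded by $G(r,s)$ of Lemma~\ref{hyokaG} with $\omega$ replaced by a constant of size $\delta/n$. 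The first term is then bounded by $C|V(1)|e^{-c(r^n-1)/(\epsilon\mu)}$. For $r\ge1+\mu^\alpha$ this is at most $Ce^{-c\mu^{\alpha-1}/\epsilon}\le C\mu^\alpha$, and in weighted form $r^{n-2}$ times it is bounded using Lemma~\ref{hyokaG}(I). This is the only place the restriction $r\ge1+\mu^\alpha$ enters. For the $\eta^E_r$ term, Lemma~\ref{hyokaG}(II) with $l=0$ and $|\eta^E_r|\le Cr^{-(n-1)}$ from (\ref{decayE2}) gives $C\epsilon\mu\, r^{-(n-1)}$, hence $O(\mu)$. In $F$, the pressure-difference term in $\Theta$ carries the factor $r^{n-1}/(\epsilon\mu)$, and this is exactly absorbed by the kernel: $\int_1^r G\,\frac{s^{n-1}}{\epsilon\mu}\,ds\le C$. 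It contributes $C\sup|r^{n-2}\Theta|\,r^{-(n-2)}$. The nonlocal term $\frac{\epsilon(n-1)r^{n-1}}{\mu}\int_r^\infty V s^{-(2n-1)}ds$ similarly contributes $C\epsilon^2\sup|r^{n-2}V|\,r^{-(n-2)}$. The quantity $\sup|r^{n-2}V|$ here is taken over all $r\ge1$ and is finite by Theorems~\ref{rmt} and \ref{rmtEI}.

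\textbf{Step 2: the temperature difference.} Since $\Theta(1)=0$ and $\Theta\to0$, I would integrate the $\Theta$-equation exactly as in (\ref{ThetaE1}), writing $\Theta(r)=-\frac{(H-H^E)(1)}{r^{n-2}}+(H-H^E)(r)$. The terms $J_1,J_2$ are bounded as in (\ref{J1o})--(\ref{J2o}) by $C\epsilon(\sup|r^{n-2}V|+\sup|r^{n-2}\Theta|)$, now with $\epsilon$ fixed so that no $\epsilon-\epsilon^E$ term appears. The term $J_4$ is $O(\mu)$.

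\textbf{Step 3: the main obstacle, $J_3$.} The term $J_3=\frac{\epsilon^2\mu}{\kappa}\int_r^\infty\frac{(v_++\eta)\eta_r}{\tau^{2(n-1)}}d\tau$ contains $\eta_r$, which is of size $1/(\epsilon\mu)$ inside the layer. I would not estimate $|\eta_r|$ there. Instead I would integrate by parts: $(v_++\eta)\eta_r=\frac12((v_++\eta)^2)_r$. This gives $J_3=O(\epsilon^2\mu)$ uniformly on all of $[1,\infty)$, using only $|\eta|\le C$.

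\textbf{Step 4: closing the estimate.} A global sup of $V$ over $r\ge1$ is not small, because of the boundary jump. I would therefore split $V=V_1+V_2$, where $V_1$ is the homogeneous part $e^{-\int}V(1)$, or alternatively compare with $\bar\eta$. The component $V_1$ is explicitly bounded, and in the $\Theta$ and $F$ estimates it enters only through integrals such as $\int_1^\infty |V_1|\,\tau^{-(n-1)}d\tau\le C\epsilon\mu$. This is because the layer has width $O(\epsilon\mu)$.

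With these bounds I obtain
\[
X:=\sup_{r\ge1}r^{n-2}(|V_2|+|\Theta|)\le C\mu+C(|u_b|+|\chi_b|+|\eta_b|)X.
\]
The layer contributions from $V_1$ inside $F$ are handled by the same kernel bound, giving at most $C\mu^\alpha$ once I restrict to $r\ge1+\mu^\alpha$ for the final pointwise statement. Smallness then yields (\ref{invsidlimitinf1}).

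The delicate bookkeeping will be to ensure that the nonlocal and pressure terms fed by $V_1$ do not spoil the $O(\mu^\alpha)$ bound. I expect this to be the point requiring care.
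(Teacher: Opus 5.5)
Your argument is correct, but it handles the layer differently from the paper.

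\textbf{How the paper does it.} The paper also isolates the homogeneous term $V(1)e^{-\int_1^r a\tau^{n-1}/(\epsilon\mu)\,d\tau}$ (its $I_1$). It then closes only in unweighted sups over $I_\infty=[1+\mu^\alpha,\infty)$:
\begin{itemize}
\item The nonlocal term is integrated by parts in the kernel, so the layer enters only as $\epsilon^3\mu\sup_{r\ge1}|V|$, which is finite a priori.
\item $\Theta$ on $I_b=[1,1+\mu^\alpha]$ is controlled crudely by the Lipschitz bound $|\Theta_r|\le C$ together with $\Theta(1)=0$ (Lemma~\ref{estaV}(iv)).
\item The layer part of $\alpha-\alpha^E$ is bounded by (size)$\times$(length $\mu^\alpha$), using $|V_r|\lesssim(\epsilon\mu)^{-1}$.
\item $\Theta$ is then recovered on $I_\infty$ by integrating from infinity, with the $\eta_r$-term handled through Lemma~\ref{estaV}(i).
\end{itemize}
The two crude layer steps are exactly where the rate $\mu^\alpha$ comes from.

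\textbf{How your route differs.} You run the outflow-type weighted argument on all of $[1,\infty)$. The identity $(v_++\eta)\eta_r=\tfrac12\bigl((v_++\eta)^2\bigr)_r$ gives $J_3=O(\epsilon^2\mu)$ without any information on $\eta_r$. Splitting off $V_1$ leaves $V_2$ with $V_2(1)=0$, to which a global norm applies, because $V_1$ only enters through integrals over a region of width $O(\epsilon\mu)$. This removes the need for Lemma~\ref{estaV} entirely. It also proves more: $\sup_{r\ge1}|\Theta|\le C\mu$ and $|V(r)|\le C\mu+C|V(1)|e^{-\delta(r^n-1)/(n\epsilon\mu)}$. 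So the restriction $r\ge1+\mu^\alpha$ is used only to make the explicit $V_1$ term small. The cost is the extra weighted bookkeeping for $V_1$; the paper's route is shorter but yields only $\mu^\alpha$.

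\textbf{Two points in your bookkeeping to state correctly.}
\begin{itemize}
\item The $V$-dependence of the pressure difference sits in $a$, so there are no pressure terms fed by $V_1$. $V_1$ reaches $F$ only through the nonlocal term, and that contribution after Duhamel is $O(\epsilon^3\mu)$ uniformly on $[1,\infty)$. No restriction to $I_\infty$ is needed there.
\item The final inequality is triangular, not of the form $X\le C\mu+(\text{small})X$. The $\Theta$-part of $F$ gives $\sup r^{n-2}|V_2|\le C\mu+C\sup r^{n-2}|\Theta|+\cdots$ with a constant that is not small. Close it by first absorbing in the $\Theta$ inequality, where $V_2$ appears only with a small factor. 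Alternatively, use a weighted sum $\sup r^{n-2}|V_2|+K\sup r^{n-2}|\Theta|$ with $K$ large.
\end{itemize}
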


\noindent
To prove the above theorem, we define the Banach space $X_l$ as 
\[
X_l:=\left\{f\in C([1,\infty))\ ;\ \sup_{r\ge1}|r^l f(r)|<\infty\right\},
\qquad
\|f\|_{X_l}:=\sup_{r\ge1}|r^l f(r)|.
\]
We first establish the following lemmas.
\begin{lem}
\label{hyoka1}
{\rm (I)} (see \cite{HM0}) If $\epsilon\le \frac{\delta}{(n-2)\mu}$, then   
\begin{align}
\label{hyoka}
r^{n-2}G(r,1)\le 1, \qquad r\ge 1.
\end{align}
{\rm (II)}
If  $l\ge -n$, then 
there exists a positive constant \(C_0\), independent of \(\epsilon\) and \(\mu\), such that, for any \(f\in X_l\),
\begin{align}
\label{ineqG}
\begin{aligned}
&r^{l+n-1}|\int_1^r G(r,s) f(s) ds|
\le C_0\epsilon \mu\|f\|_{X_l},\quad r\ge 1,
\end{aligned}
\end{align}
where $G(r,s):=
\exp\left\{
-\int_s^r
\frac{|a(\tau)|}{\epsilon\mu}\tau^{n-1}\,d\tau
\right\}.$
\end{lem}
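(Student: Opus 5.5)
The plan is to reduce both statements to the lower bound $a(r)\ge\delta$ established just before Theorem~\ref{ivli0}. That bound gives
\[
\int_s^r \frac{|a(\tau)|}{\epsilon\mu}\tau^{n-1}\,d\tau \ \ge\ \frac{\delta}{n\epsilon\mu}(r^n-s^n),
\qquad\text{hence}\qquad
G(r,s)\le \exp\Bigl\{-\frac{\delta}{n\epsilon\mu}(r^n-s^n)\Bigr\},\quad 1\le s\le r .
\]
After this, everything is an elementary estimate of the explicit kernel, exactly in the spirit of Lemma~\ref{hyokaG}.

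For (I), I set
\[
f(r):=(n-2)\log r-\frac{\delta}{n\epsilon\mu}(r^n-1).
\]
Then $f(1)=0$ and
\[
f'(r)=\frac{n-2}{r}-\frac{\delta r^{n-1}}{\epsilon\mu}\le (n-2)-\frac{\delta}{\epsilon\mu}\le 0
\]
for $r\ge1$, precisely under the hypothesis $\epsilon\le \delta/((n-2)\mu)$. Hence $f\le 0$ on $[1,\infty)$, which is \eqref{hyoka} after exponentiating.

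For (II), I bound
\[
\Bigl|\int_1^r G f\,ds\Bigr|\le \|f\|_{X_l}\int_1^r e^{-\frac{\delta}{n\epsilon\mu}(r^n-s^n)}s^{-l}\,ds .
\]
I then split the integral according to the size of $s$.

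The first piece is the \emph{near part} $s\in[\max(1,r/2),r]$. There $s^{-l}\le 2^{|l|}r^{-l}$ and $s^{n-1}\ge 2^{-(n-1)}r^{n-1}$. Inserting the factor $s^{n-1}/s^{n-1}$ and integrating exactly gives
\[
\int e^{-\frac{\delta}{n\epsilon\mu}(r^n-s^n)}s^{n-1}\,ds\le \frac{\epsilon\mu}{\delta},
\]
so this piece is $\le C\epsilon\mu\, r^{-(l+n-1)}$. This also covers all $r\le 2$ up to constants, since there both $s^{-l}$ and $s^{1-n}$ are bounded.

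The second piece is the \emph{far part} $s\in[1,r/2]$, which occurs only for $r\ge2$. There $r^n-s^n\ge(1-2^{-n})r^n$, so the kernel is at most $e^{-c r^n/(\epsilon\mu)}$. Since $l\ge -n$, we also have $\int_1^r s^{-l}\,ds\le C r^{n+1}\log(1+r)$. I then use $e^{-x}\le C_k x^{-k}$ with $x=cr^n/(\epsilon\mu)$ and $k$ an integer large enough that $nk\ge l+2n+1$. This gives
\[
r^{l+n-1}\cdot\text{(far part)}\le C(\epsilon\mu)^k r^{\,l+2n+1-nk}\log(1+r)\le C\epsilon\mu ,
\]
because $\epsilon\mu\le 1$ when $\epsilon_0$ is small and $\mu\le1$. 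Combining the two pieces gives \eqref{ineqG}, with $C_0$ depending only on $n,l,\delta$.

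The only delicate point is this far-region bookkeeping. One must make sure that the polynomial growth coming from $s^{-l}$ with $l$ possibly negative, down to $-n$, is absorbed by the Gaussian-type factor $e^{-cr^n/(\epsilon\mu)}$, while still keeping a full factor $\epsilon\mu$ and not merely $o(1)$. Choosing $k$ large as above handles this. I also need the lower bound $a\ge\delta$ to be uniform in $\mu$, which is guaranteed by the smallness of the data in Theorem~\ref{rmtEI}.
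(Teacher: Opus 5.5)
Your proof is correct and follows the route the paper relies on. The paper gives no argument here beyond citing \cite{HM0} and the analogous Lemma~\ref{hyokaG}, which amounts to your reduction via $|a|\ge a\ge\delta$ to the explicit kernel $e^{-\frac{\delta}{n\epsilon\mu}(r^n-s^n)}$; your monotonicity argument for (I) and your near/far splitting for (II) supply the elementary estimates the paper omits. Your use of $\epsilon\mu\le 1$ is genuinely needed, since for $l>1$ the bound fails for large $\epsilon\mu$.

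One bookkeeping fix in the far part: $r^{l+n-1}\cdot r^{-nk}\cdot r^{n+1}=r^{l+2n-nk}$, and the logarithm is unnecessary because $s^{-l}\le s^{n}$ for $l\ge -n$. As displayed, your factor $r^{l+2n+1-nk}\log(1+r)$ is unbounded in the equality case $nk=l+2n+1$, so either use the correct exponent or take $k$ one larger.
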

\noindent
We also establish the following lemma. 
\begin{lem}
\label{estaV}
For $V$, $\Theta$, and $\alpha-\alpha^E$ appearing in \eqref{inf0},
we have the following estimates.
\begin{align}
\label{nlV}
\begin{aligned}
&{\rm (i)}\quad |V_r|\le C(\e+|\chi_b|)+C\frac{r^{n-1}}{\e\mu}(\sup_{r\ge 1+\mu^{\alpha}}|V(r)|+\sup_{r\ge 1+\mu^{\alpha}}|\Theta(r)|),\qquad r\ge 1+\mu^{\al},\\
&{\rm (ii)}\quad |V_r|\le C(\e+|\chi_b|)+C\frac{r^{n-1}}{\e\mu}(\sup_{r\ge 1}|V(r)|+\sup_{r\ge 1}|\Theta(r)|),\qquad r\ge 1,\\
&{\rm (iii)}\quad |\alpha-\alpha^E|\le C\e\mu^{\al}+C\e(\sup_{r \ge 1+\mu^{\al}}|V(r)|+\sup_{r \ge 1+\mu^{\al}}|\Theta(r)|),\\
&{\rm (iv)}\quad |\Theta_r(r)|\le C, \qquad r\ge 1.
\end{aligned}
\end{align}
\end{lem}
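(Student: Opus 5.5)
The plan is to read off all four estimates directly from the difference system \eqref{inf0} and the integral representations \eqref{chiintout}, \eqref{alphaE}. Throughout I use the uniform bounds of Theorem \ref{rmt}(I) and Theorem \ref{rmtEI}, namely $|\eta|,|\chi|,|\eta^E|,|\chi^E|\le C(|u_b|+|\eta_b|+|\chi_b|)$ and $|\eta^E_r|\le C(|u_b|+|\chi_b|)$. I also use that $\epsilon=u_b/v_b$ is comparable to $u_b$, so $|u_b|\le C\epsilon$, together with the bound $0<\delta\le a(r)\le C$.

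\textbf{Steps (i) and (ii).} I rewrite the first equation of \eqref{inf0} as
\[
V_r=-\frac{r^{n-1}}{\epsilon\mu}a(r)V-\eta^E_r+F[V,\Theta].
\]
The term $\eta^E_r$ is bounded by $C(\epsilon+|\chi_b|)$ by \eqref{decayE2}. In $F$, the pressure difference is Lipschitz in $\Theta$, so it contributes at most $C\frac{r^{n-1}}{\epsilon\mu}|\Theta(r)|$. The nonlocal term satisfies
\[
\frac{\epsilon r^{n-1}}{\mu}\int_r^\infty \frac{|V(s)|}{s^{2n-1}}\,ds\le C\frac{r^{n-1}}{\epsilon\mu}\,\epsilon^2\sup_{s\ge r}|V(s)| .
\]
The term $a V$ is bounded in the same way. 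Every term in this identity involves only values of $V$ and $\Theta$ at points $s\ge r$. Hence for $r\ge 1+\mu^\alpha$ the suprema may be taken over $[1+\mu^\alpha,\infty)$, which gives (i). Taking the suprema over $[1,\infty)$ gives (ii).

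\textbf{Step (iii).} Since both $\alpha$ and $\alpha^E$ enforce the same boundary value $\chi_b$, we have
\[
\alpha-\alpha^E=-\kappa(n-2)\bigl(H(\eta,\eta_r,\chi)(1)-H^E(\eta^E,\chi^E)(1)\bigr).
\]
This difference is the integral over $[1,\infty)$ of four integrands, analogous to $J_1,\dots,J_4$ in \eqref{H-H^E} with $\epsilon^E=\epsilon$.
\begin{itemize}
\item For the $c_P\epsilon\Theta$ term and the $\epsilon^3\bigl((v_++\eta)^2-(v_++\eta^E)^2\bigr)$ term, I split $[1,\infty)=I_b\cup I_\infty$. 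On $I_b$ the integrands are bounded by $C\epsilon$, because $V$ and $\Theta$ are uniformly bounded, and $|I_b|=\mu^\alpha$; this gives $C\epsilon\mu^\alpha$. On $I_\infty$ the integrands are at most $C\epsilon\tau^{-(n-1)}\bigl(\sup_{I_\infty}|V|+\sup_{I_\infty}|\Theta|\bigr)$, which is integrable.
\item The viscous heating term $2\epsilon^2\nu(n-1)(v_++\eta)^2/\tau^{2n-1}$ is $O(\epsilon^2\mu)$ by \eqref{mutonu}.
\end{itemize}
The main obstacle is the term
\[
\epsilon^2\mu\int_1^\infty \frac{(v_++\eta)\eta_r}{\tau^{2(n-1)}}\,d\tau .
\]
Inside the layer $\eta_r$ can be as large as $(\epsilon\mu)^{-1}$, so a crude pointwise bound only yields $O(\epsilon)$. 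I would avoid this by integrating by parts, writing $(v_++\eta)\eta_r=\tfrac12\bigl((v_++\eta)^2\bigr)_r$. This leaves a boundary term at $\tau=1$ and a bulk integral $\int (v_++\eta)^2\tau^{-(2n-1)}\,d\tau$, both bounded. The whole term is therefore $O(\epsilon^2\mu)\le C\epsilon\mu^\alpha$, and this completes (iii).

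\textbf{Step (iv).} I write $\Theta_r=\chi_r-\chi^E_r$ and use the $\chi$-equations in \eqref{reforme} and \eqref{4reformE}. The constants $\alpha$ and $\alpha^E$ are bounded, and all terms in $H_0$ and $H_0^E$ are bounded except $\epsilon^2\mu(v_++\eta)\eta_r/r^{2(n-1)}$. For that term I substitute (ii) and the boundedness of $V$ and $\Theta$:
\[
\epsilon^2\mu|\eta_r|\,r^{-2(n-1)}\le \epsilon^2\mu\bigl(|V_r|+|\eta^E_r|\bigr)r^{-2(n-1)}\le C\epsilon^2\mu+C\epsilon\, r^{-(n-1)}\le C .
\]
Together with $|\chi^E_r|\le C$ from \eqref{decayE2}, this gives $|\Theta_r|\le C$.
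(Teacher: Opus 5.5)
Your proposal is correct, and for (i), (ii) and (iv) it is the paper's argument. You rearrange the first equation of \eqref{inf0}, and in (iv) you insert (ii) into the $\epsilon^2\mu(v_++\eta)\eta_r$ term of the $\chi$-equation. Your remark that every term in the $V$-equation involves only values at $s\ge r$ is what justifies restricting the suprema to $[1+\mu^\alpha,\infty)$ in (i); the paper leaves this implicit.

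The one genuine difference is in (iii), for the viscous term $\epsilon^2\mu\int_1^\infty (v_++\eta)\eta_\tau\,\tau^{-2(n-1)}\,d\tau$.

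\emph{The paper's route.} It writes $\eta_\tau=\eta^E_\tau+V_\tau$ and inserts the pointwise bounds (i)--(ii) for $V_\tau$. On $I_b$, the size $O((\epsilon\mu)^{-1})$ of $V_\tau$ is offset by the prefactor $\epsilon^2\mu$ and the interval length $\mu^\alpha$, giving $C\epsilon\mu^\alpha$. On $I_\infty$ it produces $C\epsilon^2\mu+C\epsilon\bigl(\sup_{I_\infty}|V|+\sup_{I_\infty}|\Theta|\bigr)$. So the ``crude'' pointwise bound you dismiss is actually sufficient once the $I_b/I_\infty$ splitting is done.

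\emph{Your route.} Integrating by parts with $(v_++\eta)\eta_\tau=\frac12\bigl((v_++\eta)^2\bigr)_\tau$ bypasses this entirely. It needs only the uniform bound on $\eta$ and the vanishing of $(v_++\eta)^2\tau^{-2(n-1)}$ at infinity. It gives the sharper bound $O(\epsilon^2\mu)$ for that term, and it makes (iii) logically independent of (i)--(ii).

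The same identity would also simplify Lemma~\ref{1}(ii), which bounds the same quantity $\alpha-\alpha^E$. There $\eta^E+V_b+\bar\eta=\eta$, so integrating by parts avoids the $\mu^{\alpha-1}/\epsilon$ term of \eqref{estVral} altogether.
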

\noindent
\underline{\it Proof of Lemma \ref{estaV}.} \ 
Rearranging the first equation of \eqref{inf0}, we obtain
\begin{align}
\label{esVr}
\begin{aligned}
|V_r|&=|-\frac{r^{n-1}}{\epsilon\mu}a(r)V-\eta^{E}_r+F|\\
&\le C (|\chi_b|+\e)+C\frac{r^{n-1}}{\e\mu}(\sup_{s\ge 1+\mu^{\alpha}}|V(s)|+\sup_{s\ge 1+\mu^{\alpha}}|\Theta(s)|),
\end{aligned}
\end{align}
which gives (i). The estimate (ii) follows in the same manner
by considering $r\ge 1$. 

Using the definitions of \(\alpha\) and \(\alpha^E\) in (\ref{alpha}) and (\ref{alphaE}), respectively, together with the estimates \eqref{mutonu}, \eqref{4decayE1}, \eqref{decayE2}, and \eqref{esVr}, we obtain the following estimate for \(|\alpha-\alpha^E|\): 
\begin{align}
\begin{aligned}
&|\alpha-\alpha^E|=|\kappa(n-2)\left(H(\eta,\eta_r,\chi)(1)-H^E(\eta^E, \chi^E)(1)\right)|\\
&\le \left|\kappa(n-2)\int_1^{1+\mu^{\alpha}}\frac{\epsilon c_P}{\kappa \tau^{n-1}}\Theta(\tau) + 
\frac{\epsilon^3}{2}\frac{(2v_++\eta+\eta^E)V}{\kappa \tau^{3(n-1)}}\right.\\
&\hspace{2cm}\left. -\epsilon^2\mu \dfrac{(v_++\eta)(\eta^E+V)_\tau}{\kappa \tau^{2(n-1)}}+\frac{2\epsilon^2\nu(n-1)(v_++\eta)^2}{\kappa \tau^{2n-1}}d\tau\right|\\
&+\left|\kappa(n-2)\int_{1+\mu^{\alpha}}^{\infty}\frac{\epsilon c_P}{\kappa \tau^{n-1}}\Theta(\tau) + 
\frac{\epsilon^3}{2}\frac{(2v_++\eta+\eta^E)V}{\kappa \tau^{3(n-1)}}\right.\\
&\hspace{2cm}\left.-\epsilon^2\mu \dfrac{(v_++\eta)(\eta^E+V)_\tau}{\kappa \tau^{2(n-1)}}+\frac{2\epsilon^2\nu(n-1)(v_++\eta)^2}{\kappa \tau^{2n-1}}d\tau\right|\\
&\le C\e\mu^{\al}+C\e\left(\sup_{r \ge 1+\mu^{\al}}|V(r)|+\sup_{r \ge 1+\mu^{\al}}|\Theta(r)|\right).
\end{aligned}
\end{align}
Finally, from the second equation of \eqref{inf0}, together with
(ii) and (iii) in \eqref{nlV}, Theorem~\ref{rmtEI}, and the uniform
boundedness of $\eta$, $\chi$, $\eta^E$, and $\chi^E$, we obtain
\[
|\Theta_r(r)|\le C, \qquad r\ge 1.
\]
This proves (iv).\qquad $\Box$

\noindent
\underline{\it Proof of Theorem \ref{ivli0}.} \ 
Applying Duhamel's principle to the first equation of (\ref{inf0}), we obtain
\begin{align}
\label{intchiinf0}
\begin{aligned}
V(r)&=V(1)e^{-\int_1^r \frac{a(\tau)}{\e\mu}\tau^{n-1}d\tau}
-\int_1^r e^{-\int_{\tau}^{r}\frac{|a(s)|}{\epsilon\mu}s^{n-1}ds}\eta^{E}_r({\tau})\,d\tau
+\int_1^r e^{-\int_{\tau}^{r}\frac{|a(s)|}{\epsilon\mu}s^{n-1}ds}F({\tau})\,d\tau \\[2mm]
&=: I_1+I_2+I_3.
\end{aligned} 
\end{align}
Noting that $r\ge 1+\mu^{\alpha}$, we can estimate $I_1$ as 
\begin{align}
\label{I1bleu}
\begin{aligned}
|I_1| \le |V(1)|e^{-\int_1^r \frac{\delta}{\e\mu}\tau^{n-1} d\tau}
\le |V(1)|e^{-\frac{\delta}{\e \mu}(r-1)}
\le |V(1)|e^{-\frac{\delta}{\e} \frac{1}{\mu^{1-\al}}}
\le C\mu, 
\end{aligned}
\end{align}
where we used the assumption $\al \in (0,1)$. 
Using Theorem \ref{rmtEI} and applying Lemma \ref{hyoka1} (II) with \(l=0\) and \(f=1\), we obtain
\begin{align}
&|I_2|\le  \sup_{r\ge 1}|\eta_r^E(r)| \int_1^r G(r,\tau)\,d\tau \notag\\
 &\le C_0\epsilon\mu  \sup_{r\ge 1}|\eta_r^E(r)| \le  C\epsilon\mu\bigl(|u_b|+|\chi_b|\bigr) \le C\mu .
 \label{I2bleu}
\end{align}
By the definition of \(F[V,\Theta]\), we divide \(I_3\) into two parts \(I_{31}\) and \(I_{32}\) as follows:
\begin{align}
\begin{aligned}
&I_{31}:=-\frac{\e (n-1)}{\mu}\int_1^r e^{-\int_{\tau}^{r}\frac{|a(s)|}{\epsilon\mu}s^{n-1}ds}
\tau^{n-1}\int_{\tau}^{\infty}\frac{V(s)}{s^{2n-1}}ds\,d\tau, \\
&I_{32}:=\int_1^r e^{-\int_{\tau}^{r}\frac{|a(s)|}{\epsilon\mu}s^{n-1}ds}\left\{\frac{\tau^{n-1}}{\epsilon\mu}(\ti{p}(\eta^E+v_+,\chi^E+\theta_++\Theta)-\ti{p}(\eta^E+v_+,\chi^E+\theta_+))\right\}\,d\tau.
\end{aligned}
\end{align}
Using  $a(r)\geq\delta$  and integrating by parts, we estimate $I_{31}$ as follows:
\begin{align}
\label{I3bleu}
\begin{aligned}
|I_{31}|&\le \frac{\e (n-1)}{\mu}\int_1^r e^{-\frac{\delta}{\e\mu n}(r^n-\tau^n)}
\tau^{n-1}\int_{\tau}^{\infty}\frac{|V(s)|}{s^{2n-1}}ds\,d\tau \\
&= \frac{\e^2 (n-1)}{\delta}\int_1^r \left(e^{-\frac{\delta}{\e\mu n}(r^n-\tau^n)}\right)_{\tau}\int_{\tau}^{\infty}\frac{|V(s)|}{s^{2n-1}}ds\ d\tau \\
&\le \frac{\e^2 (n-1)}{\delta}\left(\int_{r}^{\infty}\frac{|V(s)|}{s^{2n-1}}ds+\int_1^r  e^{-\frac{\delta}{\e\mu n}(r^n-\tau^n)} \frac{|V(\tau)|}{\tau^{2n-1}}d\tau \right) \\
&\le \frac{\e^2 (n-1)}{\delta}\left(C \sup_{r\ge 1+\mu^{\alpha}}|V(r)|+C\frac{\e\mu}{\delta}\sup_{r\ge 1}|V(r)|\right)\\
&\le C|u_b|^2(\sup_{r\ge 1+\mu^{\alpha}}|V(r)|+C\mu). 
\end{aligned}
\end{align} 
We obtain the following estimate for $|I_{32}|$:
\begin{equation}
\label{I32}
\begin{aligned}
|I_{32}|
\le{}&
\left|
\int_1^{1+\mu^\alpha}
e^{-\int_\tau^r \frac{|a(s)|}{\epsilon\mu}s^{n-1}\,ds}
\frac{\tau^{n-1}}{\epsilon\mu}
\right.\\
&\qquad\left.
\times
\Bigl(
\widetilde p(\eta^E+v_+,\chi^E+\theta_++\Theta)
-\widetilde p(\eta^E+v_+,\chi^E+\theta_+)
\Bigr)\,d\tau
\right|\\
&+
\left|
\int_{1+\mu^\alpha}^{r}
e^{-\int_\tau^r \frac{|a(s)|}{\epsilon\mu}s^{n-1}\,ds}
\frac{\tau^{n-1}}{\epsilon\mu}
\right.\\
&\qquad\left.
\times
\Bigl(
\widetilde p(\eta^E+v_+,\chi^E+\theta_++\Theta)
-\widetilde p(\eta^E+v_+,\chi^E+\theta_+)
\Bigr)\,d\tau
\right|\\
\le{}&
C\sup_{1\le r\le1+\mu^\alpha}|\Theta(r)|
+
C\sup_{r\ge1+\mu^\alpha}|\Theta(r)|\\
\le{}&
C\mu^\alpha
+
C\sup_{r\ge1+\mu^\alpha}|\Theta(r)|,
\end{aligned}
\end{equation}
where we applied Lemma~\ref{hyoka1}~(II) with
$l=-n+1$ and $f(r)=r^{n-1}$ to both terms. 
Moreover, since $\Theta(1)=0$, it follows from
Lemma~\ref{estaV}~(iv) that
\[
\sup_{1\le r\le 1+\mu^\alpha}|\Theta(r)|
\le C\mu^\alpha.
\]

Combining the estimates (\ref{intchiinf0})--(\ref{I32}), we obtain
\begin{align}
\label{estVl}
\sup_{r\ge 1+\mu^{\alpha}}|V(r)| \le C\mu^{\al}+C\sup_{r\ge 1+\mu^{\al}}|\Theta(r)|.
\end{align}

\bigskip
Next we proceed to the estimate of $|\Theta(r)|$ on $r\in [1+\mu^{\alpha},\infty)$. 
Integrating the second equation of (\ref{inf0}) over \([r,\infty)\), we obtain
\begin{align}
\label{Theta}
&\Theta(r)=-\int_r^{\infty}\left\{\frac{\e c_P}{\kappa s^{n-1}}\Theta(s)+N[V,\Theta](s) \right\} ds,\quad r > 1.
\end{align}
We can estimate the first term on the right-hand side of (\ref{Theta}) as 
\begin{align}
\label{est1Theta}
|\int_r^{\infty}\frac{\e c_P}{\kappa s^{n-1}}\Theta(s)ds|\le C\e\sup_{r\ge 1+\mu^{\alpha}}|\Theta(r)|.
\end{align}
Next, we estimate the second term on the right-hand side of (\ref{Theta}). 
We divide the second term on the right-hand side of (\ref{Theta}) into four terms \(I_1,\ldots,I_4\), i.e.,
\begin{align}
\begin{aligned}
&\int_r^{\infty}N[V,\Theta](s)ds\\
&=\int_r^{\infty}-\frac{\al -\al^E}{\kappa s^{n-1}}+\frac{\e^3}{2\kappa s^{3(n-1)}}(2v_++V+2\eta^E)V\\
&\qquad -\e^2\mu\frac{(v_++V+\eta^E)(V+\eta^E)_s}{\kappa s^{2(n-1)}}+
2\e^2\nu(n-1)\frac{(v_++V+\eta^E)^2}{\kappa s^{2n-1}}ds\\
&=:I_1+I_2+I_3+I_4.
\end{aligned}
\end{align}
Using estimate (iii) in (\ref{nlV}), we can estimate \(I_1\) and \(I_2\) as follows:
\begin{align}
\label{I1I2}
\begin{aligned}
&|I_1|\le |\al -\al^E|\int_r^{\infty}\frac{1}{\kappa s^{n-1}}ds\le C\e\mu^{\al}+C\e\left(\sup_{r \ge 1+\mu^{\al}}|V(r)|+\sup_{r \ge 1+\mu^{\al}}|\Theta(r)|\right),\\
&|I_2|=|\int_r^{\infty}\frac{\e^3}{2\kappa s^{3(n-1)}}(2v_++V+2\eta^E)Vds|
\le C\e\sup_{r\ge 1+\mu^{\al}}|V(r)|.\end{aligned}
\end{align}
Applying (\ref{mutonu}), Theorem \ref{rmtEI}, and Lemma \ref{estaV}(i) to estimate \(I_3\) and \(I_4\), we obtain
\begin{align}
\label{I3I4}
\begin{aligned}
|I_3|&\le C\e\mu+C\e(\sup_{r\ge 1+\mu^{\alpha}}|V(r)|+\sup_{r\ge 1+\mu^{\alpha}}|\Theta(r)|),\\
|I_4|& \le C\e\mu.
\end{aligned}
\end{align}
Combining (\ref{Theta})-(\ref{I3I4}), we obtain that 
\begin{align}
\label{estThetal}
\begin{aligned}
\sup_{r\ge 1+\mu^{\alpha}}|\Theta(r)| \le C\mu^{\al}+C\e(\sup_{r\ge 1+\mu^{\alpha}}|V(r)|+\sup_{r\ge 1+\mu^{\alpha}}|\Theta(r)|). 
\end{aligned}
\end{align}
Combining \eqref{estVl} and \eqref{estThetal}, and taking
$\epsilon>0$ sufficiently small, we obtain \eqref{invsidlimitinf1}.
 \qquad $\Box$



\medskip

\noindent
\underline{\bf Zero-viscosity limit on $I_{b}$}

\smallskip
We define a new variable \(V_b(r)\) as the difference between the solution \(\eta\) of the reformulated problem (\ref{reforme}) and the sum of \(\eta^E\) and the boundary-layer correction, where \(\eta^E\) and \(\eta^B\) are given by (\ref{4reformE}) and (\ref{Narrange3}), respectively, that is, $V_b(r):=\eta(r)-(\eta^{E}(r)+\eta^B(\frac{r-1}{\mu})-\eta^E(1))$. 
To derive the equations for \(V_b(r)\) and \(\Theta(r)\) on \([1,1+\mu^\alpha]\), we subtract (\ref{4reformE}) and (\ref{reformBr}) from (\ref{reforme}) and obtain
\begin{align}
\label{inf0bl}
 \left\{
\begin{aligned}
&(V_b)_r+\frac{r^{n-1}}{\e\mu}a(r)V_b=-\eta^{E}_r+F_1+F_2+F_3,\quad r > 1,\\[2mm]
&\Theta_r-\frac{\e c_p}{\kappa r^{n-1}}\Theta=N[V_b,\Theta](r),\quad r > 1,\\[2mm]
&\displaystyle{\lim_{r\to \infty}}V_b(r) =0,\ V_b(1) =0,\ \displaystyle{\lim_{r\to \infty}}\Theta(r)=\Theta(1) =0,
\end{aligned}
 \right.
\end{align}
where 
\begin{align*}
\begin{aligned}
&a := -\frac{(\ti p(v_++\eta^{E}+\bar \eta+V_b,\Theta+\chi^E+\theta_+)-\ti p(v_++\eta^{E}+\bar \eta,\Theta+\chi^E+\theta_+))}{V_b}-\frac{\e^2}{r^{2(n-1)}},\\
& F_1:=\frac{r^{n-1}}{\e\mu}(\ti p(v_++\eta^{E}+\bar \eta,\Theta+\chi^E+\theta_+)-\ti p(v_++\eta^{E},\Theta+\chi^E+\theta_+))\\
&\qquad -\frac{1}{\e\mu}\left(\ti p(v_++\bar \eta+\eta^{E}(1),\theta_b)-\ti p(v_++\eta^{E}(1),\theta_b)\right)
-\frac{\e}{\mu}(1-\frac{1}{r^{n-1}})\bar\eta,\\
&F_2:=-\frac{\e (n-1)r^{n-1}}{\mu}\int_r^{\infty}\frac{(\eta-\eta^{E})(s)}{s^{2n-1}}ds,\\
&F_3:=\frac{r^{n-1}}{\e\mu}(\ti p(v_++\eta^{E},\theta_++\chi^E+\Theta)-\ti p(v_++\eta^{E},\theta_++\chi^E)),\\
&N[V_b,\Theta](r):=-\frac{\al -\al^E}{\kappa r^{n-1}}+\frac{\e^3}{2\kappa r^{3(n-1)}}(2v_++V_b+\bar{\eta}+2\eta^E)(V_b+\bar{\eta})\\
&\hspace{2cm} -\e^2\mu\frac{(v_++V_b+\bar{\eta}+\eta^E)(V_b+\bar{\eta}+\eta^E)_r}{\kappa r^{2(n-1)}}+
2\e^2\nu(n-1)\frac{(v_++V_b+\bar{\eta}+\eta^E)^2}{\kappa r^{2n-1}}.
\end{aligned}
\end{align*}
By the same arguments as in the previous sections, 
it is easy to see that  
for suitably small \(|u_b|, |v_b-v_+|\), and \(|\theta_b-\theta_+|\), there exists a positive constant \(\delta\), independent of \(\mu\), such that
\begin{equation}
\label{|a|inflow}
a(r) \ge \delta, \qquad r\ge 1.
\end{equation}
We prove the following proposition for the problem (\ref{inf0bl}). 
\begin{pro} 
\label{ivli}Let $n \ge 3$ and $\al \in (0,1)$. Then, for any $v_+>0$ and $\theta_+>0$, there exist positive constants $\epsilon_0$ 
and $C$ which are independent of $\mu\in (0,1]$
such that if $|u_b|+|v_b-v_+|+|\theta_b-\theta_+| \le \epsilon_0$, 
the solution \((V_b,\Theta)\) of (\ref{inf0bl}) satisfies 
\begin{align}
\label{invsidlimitinf2}
\begin{aligned}
&\sup_{1\le r \le 1+\mu^{\al}}|V_b(r)|+\sup_{1\le r \le 1+\mu^{\al}}|\Theta(r)| \le C\mu^{\alpha}.
\end{aligned}
\end{align}
\end{pro}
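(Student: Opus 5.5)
The plan is to prove Proposition \ref{ivli} by treating $\Theta$ and $V_b$ separately on $I_b=[1,1+\mu^\alpha]$. The estimate for $\Theta$ is essentially already available. Since $\Theta(1)=0$ and Lemma~\ref{estaV}(iv) gives $|\Theta_r|\le C$ on $[1,\infty)$, integration from $r=1$ yields
\[
\sup_{1\le r\le 1+\mu^\alpha}|\Theta(r)|\le C\mu^\alpha .
\]
Lemma~\ref{estaV}(iv) is stated for $\Theta=\chi-\chi^E$, which is the same function appearing in \eqref{inf0bl}, so no new work is needed here. The remaining task is the bound for $V_b$.

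For $V_b$ I would use $V_b(1)=0$ and the positivity \eqref{|a|inflow}, $a\ge\delta$, and apply Duhamel's principle to the first equation of \eqref{inf0bl}:
\[
V_b(r)=\int_1^r G(r,\tau)\bigl(-\eta^E_r+F_1+F_2+F_3\bigr)(\tau)\,d\tau ,
\]
with $G$ the kernel of Lemma~\ref{hyoka1}. There is no boundary term because $V_b(1)=0$. The four contributions are then estimated one at a time.

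\textbf{The terms $\eta^E_r$, $F_2$ and $F_3$.} As in \eqref{I2bleu}, Lemma~\ref{hyoka1}(II) with $l=0$ gives a bound $C\epsilon\mu$ for the $\eta^E_r$ term. Write $F_2$ using $\eta-\eta^E=V_b+\bar\eta$ on $[1,1+\mu^\alpha]$, and use $V$ itself on $[1+\mu^\alpha,\infty)$, where Theorem~\ref{ivli0} gives $|V|\le C\mu^\alpha$. Integrating by parts as in \eqref{I3bleu}, this contribution is bounded by
\[
C\epsilon^2\Bigl(\mu^\alpha+\sup_{I_b}|V_b|+\|\bar\eta\|_\infty\Bigr).
\]
The $\bar\eta$ part is multiplied by $\epsilon^2$ and also gains the factor $\epsilon\mu$ from the kernel, so it is $O(\mu)$. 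For $F_3$, Lemma~\ref{hyoka1}(II) with $l=-n+1$, $f=r^{n-1}$, as in \eqref{I32}, gives
\[
C\sup_{r\ge1}|\Theta|\le C\mu^\alpha ,
\]
using the $\Theta$ bound on $I_b$ just obtained together with Theorem~\ref{ivli0} on $I_\infty$.

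\textbf{The term $F_1$ (main obstacle).} $F_1$ measures the defect from superposing the boundary layer with $\eta^E$. The idea is to Taylor expand the pressure differences in $F_1$ around the boundary point $(v_++\eta^E(1),\theta_b)$. Both pressure differences are of the form $\tilde p_v\,\bar\eta+O(\bar\eta^2)$, and their discrepancy comes from three sources:
\begin{itemize}
\item the factor $r^{n-1}-1=O(r-1)$;
\item $\eta^E(r)-\eta^E(1)=O(r-1)$, by \eqref{decayE2};
\item $\chi^E+\Theta-\chi_b=O(r-1)+O(\mu^\alpha)$, using $\Theta$ and \eqref{decayE2}.
\end{itemize}
The last term $\frac{\epsilon}{\mu}\bigl(1-r^{-(n-1)}\bigr)\bar\eta$ is likewise $O\bigl(\frac{\epsilon}{\mu}(r-1)|\bar\eta|\bigr)$. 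Altogether,
\[
|F_1|\le \frac{C}{\epsilon\mu}\bigl((r-1)+\mu^\alpha\bigr)|\bar\eta|
\le \frac{C}{\epsilon\mu}\bigl((r-1)+\mu^\alpha\bigr)e^{-\nu_0\frac{r-1}{\epsilon\mu}} .
\]
Integrating this against $G(r,\tau)\le e^{-\delta(r-\tau)/(\epsilon\mu)}$ and using $(r-1)e^{-c(r-1)/(\epsilon\mu)}\le C\epsilon\mu$, I expect the $F_1$ contribution to be $O(\mu+\mu^\alpha)$. The delicate points are, first, that the $\frac{1}{\epsilon\mu}$ factor is exactly cancelled by the $\epsilon\mu$ gained from the convolution of two exponentials, with a possible harmless factor $(r-1)/(\epsilon\mu)$ that is again absorbed by the exponential. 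Second, the constants must remain uniform as $\epsilon\to0$; this is controlled because $\epsilon$ appears symmetrically as $\epsilon\mu$ in both the kernel and $\bar\eta$. In addition, the nonlinearity in $a$, which involves $\bar\eta+V_b$, must be handled by a continuity/bootstrap argument to keep $a\ge\delta$.

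\textbf{Conclusion.} Collecting the four contributions gives
\[
\sup_{I_b}|V_b|\le C\mu^\alpha+C\epsilon^2\sup_{I_b}|V_b| .
\]
For $\epsilon$ small the last term is absorbed into the left-hand side, and together with the $\Theta$ bound this proves \eqref{invsidlimitinf2}.
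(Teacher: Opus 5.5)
Your proof is correct. For $V_b$ it is the paper's argument: Duhamel from $V_b(1)=0$ using $a\ge\delta$, the same split into $-\eta^E_r$, $F_1$, $F_2$, $F_3$, integration by parts for $F_2$ as in \eqref{I3bleu}, and the kernel estimate with $l=-n+1$ for $F_3$.

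Where you genuinely depart from the paper is the treatment of $\Theta$ on $I_b$. The paper proceeds in four steps:
\begin{itemize}
\item it first proves $\sup_{I_b}|V_b|\le C\mu^\alpha+C\sup_{I_b}|\Theta|$;
\item it then establishes Lemma~\ref{1}, which bounds $(V_b)_r$ and $\alpha-\alpha^E$;
\item it integrates the $\Theta$-equation of \eqref{inf0bl} from $r=1$;
\item it closes the coupled pair of inequalities by absorbing the $C\epsilon(\sup_{I_b}|\Theta|+\sup_{I_b}|V_b|)$ terms.
\end{itemize}
You instead read off $\sup_{I_b}|\Theta|\le C\mu^\alpha$ directly from $\Theta(1)=0$ and the uniform bound $|\Theta_r|\le C$ of Lemma~\ref{estaV}(iv). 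The paper itself already uses this very step inside the proof of Theorem~\ref{ivli0}. With it, the two estimates decouple, Lemma~\ref{1} becomes superfluous, and only $C\epsilon^2\sup_{I_b}|V_b|$ has to be absorbed. For the stated $\mu^\alpha$ rate the paper's integral-equation route gains nothing over this; your version is simply more economical.

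Likewise, your Taylor expansion of $F_1$ retaining the factor $\bar\eta$ is finer than necessary. The paper just uses $|F_1|\le C(r-1)/(\epsilon\mu)$, which is valid since $|\Theta(r)|\le C(r-1)$, together with $r-1\le\mu^\alpha$.

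Two minor points:
\begin{itemize}
\item The displayed $F_2$ bound containing $C\epsilon^2\|\bar\eta\|_\infty$ is too crude as written. The $\bar\eta$ part is small because $\int_r^{1+\mu^\alpha}|\bar\eta|\,ds\le C\epsilon\mu$, or merely because that interval has length at most $\mu^\alpha$. It is not small because of the kernel, which is absent from the boundary term of the integration by parts.
\item No bootstrap is needed for $a\ge\delta$. The functions $v_++\eta$ and $v_++\eta^E+\bar\eta$ are a priori within $O(\epsilon_0)$ of $v_+$ by Theorems~\ref{rmt}, \ref{rmtEI} and \ref{blexistB}.
\end{itemize}
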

\noindent
{\it Proof of Proposition \ref{ivli}.}\quad 
Applying Duhamel's principle to the first equation of (\ref{inf0bl}), we obtain
\begin{align}
\label{chiduabl0}
V_b(r)=\int_1^r e^{-\int_{\tau}^r \frac{a(s)}{\e\mu}s^{n-1}ds}
(-\eta^{E}_{r}+F_1+F_2+F_3)(\tau)\,d\tau.
\end{align}
By the same calculation as in (\ref{I2bleu}), 
we can estimate the first term on the right-hand side of (\ref{chiduabl0}) as 
\begin{align}
\label{4first}
\begin{aligned}
&|\int_1^r e^{-\int_{\tau}^r \frac{a(s)}{\e\mu}s^{n-1}ds}\eta^{E}_r({\tau})\,d\tau|
\le \frac{\e\mu}{\delta}\sup_{r\ge 1}|\eta^{E}_r(r)| \le C\mu.
\\
\end{aligned}
\end{align}
Noting that 
\begin{align}
\label{esF1}
\begin{aligned}
|F_1|&=|\frac{r^{n-1}}{\e\mu}(\ti p(v_++\eta^{E}+\bar \eta,\theta_++\chi^E+\Theta)-\ti p(v_++\eta^{E},\theta_++\chi^E+\Theta))\\
&\qquad -\frac{1}{\e\mu}\left(\ti p(v_++\bar \eta+\eta^{E}(1),\theta_b)-\ti p(v_++\eta^{E}(1),\theta_b)\right)
-\frac{\e}{\mu}(1-\frac{1}{r^{n-1}})\bar\eta |\\
&\le |(\frac{r^{n-1}}{\e\mu}-\frac{1}{\e\mu})(\ti p(v_++\bar \eta+\eta^{E},\theta_++\chi^E+\Theta)-\ti p(v_++\eta^{E},\theta_++\chi^E+\Theta))|\\
&+|\frac{1}{\e\mu}\left(\ti p(v_++\bar \eta+\eta^{E},\theta_++\chi^E+\Theta)-\ti p(v_++\bar \eta+\eta^{E}(1),\theta_++\chi^E+\Theta)\right)|\\
&\quad+ |\frac{1}{\e\mu}\left(-\ti p(v_++\eta^{E},\theta_++\chi^E+\Theta)+\ti p(v_++\eta^{E}(1),\theta_++\chi^E+\Theta)\right)|\\
&\quad + |\frac{1}{\e\mu}\left(\ti p(v_++\bar{\eta}+\eta^{E}(1),\theta_++\chi^E+\Theta)-\ti p(v_++\bar{\eta}+\eta^{E}(1),\theta_b)\right)|\\
&\quad + |\frac{1}{\e\mu}\left(-\ti p(v_++\eta^{E}(1),\theta_++\chi^E+\Theta)+\ti p(v_++\eta^{E}(1),\theta_b)\right)|\\
&\quad +|\frac{\e}{\mu r^{n-1}}(r^{n-1}-1)\bar\eta|\\
&\le \frac{C}{\mu\e}|r-1|,
\end{aligned}
\end{align}
the second term on the right-hand side of (\ref{chiduabl0}) can be estimated as \begin{align}
\label{blF1}
\begin{aligned}
&|\int_1^r e^{-\int_{\tau}^r \frac{a(s)}{\e\mu}s^{n-1}ds}F_1(\tau)\,d\tau| 
\le \frac{C}{\mu\e}\int_1^r e^{-\frac{\delta}{\e\mu}(r-\tau)}|\tau-1|\,d\tau 
\le C\mu^{\alpha},
\end{aligned}
\end{align}
where we used the fact that $r\in[1,1+\mu^\alpha]$. Using \eqref{|a|inflow} and integrating by parts, we estimate the contribution of $F_2$ in \eqref{chiduabl0} as follows:
\begin{align}
\label{blF2-1}
\begin{aligned}
&|\int_1^r e^{-\int_{\tau}^r \frac{a(s)}{\e\mu}s^{n-1}ds}F_2(\tau)\,d\tau| \\
&\le \frac{\e (n-1)}{\mu}\int_1^r e^{-\frac{\delta (r^n-\tau^n)}{\e\mu n}}\tau^{n-1}
\int_{\tau}^{\infty}\frac{|(\eta-\eta^{E})(s)|}{s^{2n-1}}ds \, d\tau \\
&\le \frac{\e (n-1)}{\mu}\int_1^r \frac{\e\mu}{\delta}
(e^{-\frac{\delta (r^n-\tau^n)}{\e\mu n}})_{\tau}
\int_{\tau}^{\infty}\frac{|(\eta-\eta^{E})(s)|}{s^{2n-1}}ds \, d\tau \\
&\le \frac{\e^2 (n-1)}{\delta}\left\{\int_{r}^{\infty}\frac{|(\eta-\eta^{E})(s)|}{s^{2n-1}}ds
+\int_1^r  e^{-\frac{\delta (r^n-\tau^n)}{\e\mu n}}
\frac{|(\eta-\eta^{E})(\tau)|}{\tau^{2n-1}} d\tau \right\} \\
&\le \frac{\e^2 (n-1)}{\delta}\left\{\int_{r}^{\infty}\frac{|(\eta-\eta^{E})(s)|}{s^{2n-1}}ds
+\frac{\e\mu}{\delta} \sup_{r\ge 1}|(\eta-\eta^{E})(r)|\right\}\\
&\le C\frac{\e^2(n-1)}{\delta}\left(\int_{r}^{1+\mu^{\al}}\frac{|(\eta-\eta^{E})(s)|}{s^{2n-1}}ds+\int_{1+\mu^{\al}}^{\infty}\frac{|(\eta-\eta^{E})(s)|}{s^{2n-1}}ds\right) + C\mu\\
&\le C\mu^{\al}+C\e\sup_{r\ge 1+\mu^{\al}}|(\eta-\eta^E)(r)|=C\mu^{\al}+C\e\sup_{r\ge 1+\mu^{\al}}|V(r)|.
\end{aligned}
\end{align}
Using \eqref{|a|inflow} and the explicit form of the pressure
$\widetilde p(v,\theta)=R\theta/v$, we estimate the contribution of
$F_3$ in \eqref{chiduabl0} as follows:
\begin{align}
\label{blF2-3}
\begin{aligned}
&|\int_1^r e^{-\int_{\tau}^r \frac{a(s)}{\e\mu}s^{n-1}ds}F_3({\tau})\,d\tau|\\
&=|\int_1^r e^{-\int_{\tau}^r \frac{a(s)}{\e\mu}s^{n-1}ds}\left(\frac{\tau^{n-1}}{\e\mu}(\ti p(v_++\eta^{E},\theta_++\chi^E+\Theta)-\ti p(v_++\eta^{E},\theta_++\chi^E))\right)({\tau})\,d\tau|\\
&\le C \sup_{1\le r\le 1+\mu^{\al}}|\Theta(r)|.
\end{aligned}
\end{align}
Substituting \eqref{4first}, \eqref{blF1}, \eqref{blF2-1}, and
\eqref{blF2-3} into \eqref{chiduabl0}, we obtain
\[
\sup_{1\le r\le 1+\mu^\alpha}|V_b(r)|
\le C\mu^\alpha
 +C\sup_{1\le r\le 1+\mu^\alpha}|\Theta(r)|
 +C\epsilon\sup_{r\ge 1+\mu^\alpha}|V(r)|.
\]
Hence, taking \(\epsilon>0\) sufficiently small, and using \eqref{invsidlimitinf1}, we obtain
\begin{equation}
\label{estVb1}
 \sup_{1\le r\le 1+\mu^\alpha}|V_b(r)|
 \le C\mu^\alpha
 +C\sup_{1\le r\le 1+\mu^\alpha}|\Theta(r)|.
\end{equation}

Next we proceed to the estimate of $|\Theta(r)|$ on $r\in [1, 1+\mu^{\alpha}]$. For this purpose, we establish the following lemma. 
\begin{lem}
\label{1}For $V_b$, $\Theta$ and $\alpha-\alpha^E$ appearing in (\ref{inf0bl}), we have the following estimates.
\begin{align}
\label{estVral}
\begin{aligned}
&{\rm (i)}\quad |(V_b)_r|\le C(\e+|\chi_b|)+C\frac{r^{n-1}}{\e\mu}(\sup_{1\le r\le 1+\mu^{\alpha}}|V_b(r)|+\sup_{1\le r\le 1+\mu^{\alpha}}|\Theta(r)|)+C\frac{\mu^{\al-1}}{\e},\\
&\hspace{9cm}\quad 1\le r\le 1+\mu^{\al},\\
&{\rm (ii)}\quad |\alpha-\alpha^E| \le   C\e\mu^{\al}+C\e\sup_{1\le r \le 1+\mu^{\al}}|\Theta(r)|.
\end{aligned}
\end{align}
\end{lem}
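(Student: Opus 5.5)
The proof follows the pattern of Lemma \ref{estaV}, adapted to the boundary interval $I_b$; the new ingredient is the boundary-layer term $\bar\eta$.

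\textbf{Step 1: estimate (i).} We solve the first equation of (\ref{inf0bl}) for $(V_b)_r$:
\[
(V_b)_r=-\frac{r^{n-1}}{\e\mu}a(r)V_b-\eta^E_r+F_1+F_2+F_3 ,
\]
and bound each term on $[1,1+\mu^\al]$.
\begin{itemize}
\item The term $-\eta^E_r$ is bounded by $C(\e+|\chi_b|)$, using (\ref{decayE2}) and $|u_b|\le C\e$, which holds since $\e=u_b/v_b$ with $v_b$ bounded.
\item The term $\frac{r^{n-1}}{\e\mu}a V_b$ is bounded by $C\frac{r^{n-1}}{\e\mu}\sup_{I_b}|V_b|$, because $a$ is uniformly bounded.
\item For $F_3$, the mean value theorem in $\theta$ gives $|F_3|\le C\frac{r^{n-1}}{\e\mu}\sup_{I_b}|\Theta|$.
\item For $F_1$, we use the pointwise bound (\ref{esF1}), $|F_1|\le \frac{C}{\e\mu}|r-1|$. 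On $I_b$ we have $|r-1|\le\mu^\al$, so $|F_1|\le C\mu^{\al-1}/\e$. This is where the extra term $C\mu^{\al-1}/\e$ in (i) comes from.
\end{itemize}

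\textbf{Step 1, the term $F_2$.} We bound
\[
\int_r^\infty \frac{|\eta-\eta^E|(s)}{s^{2n-1}}\,ds
\]
by splitting at $1+\mu^\al$. On $[r,1+\mu^\al]$ we have $\eta-\eta^E=V_b+\bar\eta$, and on $[1+\mu^\al,\infty)$ we have $\eta-\eta^E=V$. We then use Theorem \ref{ivli0} for $V$ and the uniform boundedness of $\bar\eta$ from Corollary \ref{blexist}. This gives $|F_2|\le C\frac{\e r^{n-1}}{\mu}$.

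This bound is absorbed into $C\mu^{\al-1}/\e$, since $\e^2\le C$ and $\mu\le\mu^{\al}$ give $\e/\mu\le C\mu^{\al-1}/\e$. Alternatively, the $\bar\eta$ contribution can be absorbed into the $F_1$-type term. Collecting all the bounds yields (i).

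\textbf{Step 2: estimate (ii).} We write
\[
\alpha-\alpha^E=\kappa(n-2)\bigl(H(\eta,\eta_r,\chi)(1)-H^E(\eta^E,\chi^E)(1)\bigr)
\]
and split the $\tau$-integral into $[1,1+\mu^\al]$ and $[1+\mu^\al,\infty)$, as in the proof of Lemma \ref{estaV}(iii).

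On the outer interval the integrand is estimated exactly as before. Theorem \ref{ivli0} gives $\sup_{r\ge 1+\mu^\al}(|V|+|\Theta|)\le C\mu^\al$, so this piece contributes $C\e\mu^\al$.

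On the inner interval:
\begin{itemize}
\item The terms $\frac{\e c_P}{\kappa\tau^{n-1}}\Theta$ and $\frac{\e^3}{2}\frac{(2v_++\eta+\eta^E)(\eta-\eta^E)}{\kappa\tau^{3(n-1)}}$ are integrated over an interval of length $\mu^\al$ with bounded integrands. They give $C\e\sup_{I_b}|\Theta|\,\mu^\al+C\e^3\mu^\al$.
\item The viscous-heating term $\frac{2\e^2\nu(n-1)(v_++\eta)^2}{\kappa\tau^{2n-1}}$ is bounded by $C\e^2\mu$, using (\ref{mutonu}).
\end{itemize}

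\textbf{Main obstacle.} The delicate term is $\e^2\mu\frac{(v_++\eta)\eta_\tau}{\kappa\tau^{2(n-1)}}$ on $I_b$, because $\eta_r$ is not uniformly bounded there: it contains the boundary-layer derivative $\bar\eta_r\sim (\e\mu)^{-1}$. We avoid using (i) here, since it would introduce $\sup|V_b|$, which does not appear in (ii). Instead, we integrate $\eta_\tau$ directly. Writing $\eta_\tau=\partial_\tau(\eta)$ and integrating by parts, with $\eta$ and $\tau^{-2(n-1)}$ uniformly bounded, gives
\[
\Bigl|\e^2\mu\int_1^{1+\mu^\al}\frac{(v_++\eta)\eta_\tau}{\tau^{2(n-1)}}\,d\tau\Bigr|\le C\e^2\mu ,
\]
using that $(v_++\eta)\eta_\tau=\tfrac12\partial_\tau(v_++\eta)^2$. (Equivalently, one can bound the pieces $\eta^E_r$, $\bar\eta_r$ via (\ref{decayetar}), and $(V_b)_r$ via (i) times $\mu\cdot\mu^\al$; this works but is messier.)

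Summing all contributions, and using $\mu\le\mu^\al$ and $\e\le 1$, gives
\[
|\alpha-\alpha^E|\le C\e\mu^\al+C\e\sup_{I_b}|\Theta| ,
\]
which is (ii).
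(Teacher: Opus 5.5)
\textbf{Gap in the $F_2$ step of (i).} After splitting $\int_r^\infty |\eta-\eta^E|\,s^{-(2n-1)}\,ds$ at $1+\mu^\alpha$, you keep only $|F_2|\le C\epsilon r^{n-1}/\mu$. You then claim $\epsilon/\mu\le C\mu^{\alpha-1}/\epsilon$. That inequality is equivalent to $\epsilon^2\le C\mu^\alpha$, which fails as $\mu\to0$ with the boundary data (hence $\epsilon=u_b/v_b$) fixed. The comparison in fact goes the other way: $\mu^{\alpha-1}=\mu^\alpha\mu^{-1}\le\mu^{-1}$. Nor can $C\epsilon/\mu$ be hidden in $C\frac{r^{n-1}}{\epsilon\mu}(\sup_{I_b}|V_b|+\sup_{I_b}|\Theta|)$, since those suprema are the small quantities being estimated. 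So, as written, (i) is not proved.

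The repair uses exactly the ingredients you cite, provided you keep the smallness they give. On $[r,1+\mu^\alpha]$ the integrand $|\eta-\eta^E|=|V_b+\bar\eta|$ is uniformly bounded (Theorems \ref{rmt} and \ref{rmtEI}), and the interval has length at most $\mu^\alpha$. On $[1+\mu^\alpha,\infty)$, Theorem \ref{ivli0} gives $|V|\le C\mu^\alpha$. Hence the integral is $O(\mu^\alpha)$, and on $I_b$ one gets $|F_2|\le C\epsilon r^{n-1}\mu^{\alpha-1}\le C\mu^{\alpha-1}/\epsilon$. The paper splits more finely: it keeps $\sup_{I_b}|V_b|$ as a separate term and uses the exponential decay (\ref{4decaybl}) to get $\int_r^{1+\mu^\alpha}|\bar\eta|\,ds\le C\epsilon\mu$. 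Your cruder bound is enough for (i), however. Your treatment of the remaining terms in (i) ($\eta^E_r$, $aV_b$, $F_1$ via (\ref{esF1}), and $F_3$) agrees with the paper.

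\textbf{Part (ii) is correct and takes a different route.} The difference lies at the delicate viscous term. The paper bounds $\epsilon^2\mu\int_1^{1+\mu^\alpha}(v_++\eta)(\eta^E+V_b+\bar\eta)_\tau\,\tau^{-2(n-1)}\,d\tau$ piece by piece. It uses (i) for $(V_b)_r$ and (\ref{decayetar}) for $\bar\eta_r$, and then (\ref{estVb1}) to remove the resulting $\sup_{I_b}|V_b|$. So in the paper, (ii) depends on (i) and on the earlier $V_b$-estimate.

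Your identity $(v_++\eta)\eta_\tau=\frac12\partial_\tau(v_++\eta)^2$, followed by one integration by parts, needs only the boundedness of $\eta$. It gives $O(\epsilon^2\mu)$ directly, and in fact over all of $[1,\infty)$. This decouples (ii) from (i) completely. Combined with your bounds on the other inner terms (bounded integrands on an interval of length $\mu^\alpha$), it yields the stronger estimate $|\alpha-\alpha^E|\le C\epsilon\mu^\alpha$, because even the crude bound $|\Theta|\le C$ suffices on $I_b$.
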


\noindent 
Proof of Lemma \ref{1}.

We first prove (i). 
For $r\in I_b=[1,1+\mu^\alpha]$, we have
\[
\eta-\eta^E=V_b+\bar{\eta}
\quad\text{on } I_b,
\qquad
\eta-\eta^E=V
\quad\text{on } I_\infty.
\]
Hence, from the definition of $F_2$ in \eqref{inf0bl}, we have
\begin{align*}
 |F_2(r)|
 &\le
 C\frac{\epsilon r^{n-1}}{\mu}
 \left\{
 \int_r^{1+\mu^\alpha}
 \frac{|V_b(s)|+|\bar{\eta}(s)|}{s^{2n-1}}\,ds
 +
 \int_{1+\mu^\alpha}^{\infty}
 \frac{|V(s)|}{s^{2n-1}}\,ds
 \right\}                                                   \\
 &\le
 C\frac{\epsilon r^{n-1}}{\mu}\mu^\alpha
 \sup_{1\le s\le1+\mu^\alpha}|V_b(s)|
 +
 C\frac{\epsilon r^{n-1}}{\mu}
 \int_r^{1+\mu^\alpha}|\bar{\eta}(s)|\,ds                   \\
 &\qquad
 +
 C\frac{\epsilon r^{n-1}}{\mu}
 \sup_{s\ge1+\mu^\alpha}|V(s)|
 \int_{1+\mu^\alpha}^{\infty}\frac{1}{s^{2n-1}}\,ds .
\end{align*}
Using \eqref{4decaybl} and \eqref{invsidlimitinf1}, we obtain
\[
 \int_r^{1+\mu^\alpha}|\bar{\eta}(s)|\,ds
 \le C\epsilon\mu,
 \quad
 \sup_{s\ge1+\mu^\alpha}|V(s)|
 \le C\mu^\alpha .
\]
Therefore,
\begin{equation}
\label{estF2Ib}
 |F_2(r)|
 \le
 C\frac{r^{n-1}}{\epsilon\mu}
 \sup_{1\le s\le1+\mu^\alpha}|V_b(s)|
 +C\epsilon
 +C\frac{\mu^{\alpha-1}}{\epsilon},
 \qquad
 1\le r\le1+\mu^\alpha .
\end{equation}

Now, by the first equation of \eqref{inf0bl}, 
\eqref{4decayE1}, the boundedness of $a(r)$, 
\eqref{esF1}, \eqref{estF2Ib}, and the estimate
\[
 |F_3(r)|
 \le
 C\frac{r^{n-1}}{\epsilon\mu}
 \sup_{1\le s\le1+\mu^\alpha}|\Theta(s)|,
\]
we have
\begin{align}
 |(V_b)_r|
 &=
 \left|
 -\eta_r^E
 -\frac{r^{n-1}}{\epsilon\mu}a(r)V_b
 +F_1+F_2+F_3
 \right|                                                     \notag\\
 &\le
 C(\epsilon+|\chi_b|)
 +C\frac{r^{n-1}}{\epsilon\mu}
   \sup_{1\le s\le1+\mu^\alpha}|V_b(s)|
 +C\frac{|r-1|}{\mu\epsilon}                                
 +C\epsilon
 +C\frac{\mu^{\alpha-1}}{\epsilon}
 +C\frac{r^{n-1}}{\epsilon\mu}
   \sup_{1\le s\le1+\mu^\alpha}|\Theta(s)|                  \notag\\
 &\le
 C(\epsilon+|\chi_b|)
 +C\frac{\mu^{\alpha-1}}{\epsilon}
 +C\frac{r^{n-1}}{\epsilon\mu}
 \left(
   \sup_{1\le s\le1+\mu^\alpha}|V_b(s)|
   +
   \sup_{1\le s\le1+\mu^\alpha}|\Theta(s)|
 \right),
 \qquad 1\le r\le1+\mu^\alpha ,
\label{V_br}
\end{align}
where we used $|r-1|\le\mu^\alpha$ in the last inequality.
This proves (i).
On the other hand, from the definitions of $\alpha$ in \eqref{alpha}
and $\alpha^E$ in \eqref{alphaE}, together with
\eqref{mutonu}, \eqref{4decayE1}, \eqref{decayE2},
\eqref{4decaybl}, \eqref{decayetar},
\eqref{invsidlimitinf1}, \eqref{estVb1}, and   \eqref{V_br},
we estimate $|\alpha-\alpha^E|$ as follows:
\begin{align}
|\alpha-\alpha^E|
\le{}&
\kappa(n-2)
\left|
\int_1^{1+\mu^\alpha}
\left\{
 \frac{\epsilon c_P}{\kappa\tau^{n-1}}\Theta(\tau)
 +\frac{\epsilon^3}{2}
  \frac{(2v_+ +\eta+\eta^E)(V_b+\bar\eta)}
       {\kappa\tau^{3(n-1)}}
\right.\right.
\nonumber\\
&\hspace{32mm}\left.\left.
 -\epsilon^2\mu
  \frac{(v_++\eta)(\eta^E+V_b+\bar\eta)_\tau}
       {\kappa\tau^{2(n-1)}}
 +2\epsilon^2\nu(n-1)
  \frac{(v_++\eta)^2}
       {\kappa\tau^{2n-1}}
\right\}
\,d\tau
\right|
\nonumber\\
&+
\kappa(n-2)
\left|
\int_{1+\mu^\alpha}^{\infty}
\left\{
 \frac{\epsilon c_P}{\kappa\tau^{n-1}}\Theta(\tau)
 +\frac{\epsilon^3}{2}
  \frac{(2v_+ +\eta+\eta^E)V}
       {\kappa\tau^{3(n-1)}}
\right.\right.
\nonumber\\
&\hspace{32mm}\left.\left.
 -\epsilon^2\mu
  \frac{(v_++\eta)(\eta^E+V)_\tau}
       {\kappa\tau^{2(n-1)}}
 +2\epsilon^2\nu(n-1)
  \frac{(v_++\eta)^2}
       {\kappa\tau^{2n-1}}
\right\}
\,d\tau
\right|
\nonumber\\
\le{}&
C\epsilon\mu^\alpha
+C\epsilon
\left(
 \sup_{1\le r\le1+\mu^\alpha}|V_b(r)|
 +\sup_{1\le r\le1+\mu^\alpha}|\Theta(r)|
\right)+
C\epsilon
\left(
 \sup_{r\ge1+\mu^\alpha}|V(r)|
 +\sup_{r\ge1+\mu^\alpha}|\Theta(r)|
\right)
\nonumber\\
\le{}&
C\epsilon\mu^\alpha
+C\epsilon
\sup_{1\le r\le1+\mu^\alpha}|\Theta(r)|. \qquad \Box
\label{alphaestIb}
\end{align}


\noindent
\underline{\it We continue the proof of Proposition \ref{ivli}.}\\
\noindent
Integrating the second equation of \eqref{inf0bl} over $[1,r]$
and using $\Theta(1)=0$, we obtain
\begin{align}
\label{bTheta_b}
&\Theta(r)=\int_1^{r}\left\{ \frac{\e c_P}{\kappa s^{n-1}}\Theta(s)+N[V_b,\Theta](s)\right\} ds,\quad 1\le r \le 1+\mu^{\alpha}.
\end{align}
The first term on the right-hand side of (\ref{bTheta_b}) is estimated as
\begin{align}
\label{best1Theta_b}
|\int_1^{r}\frac{\e c_P}{\kappa s^{n-1}}\Theta(s)ds|\le C\e\sup_{1\le r \le 1+\mu^{\alpha}}|\Theta(r)|.
\end{align}
We decompose the second term on the right-hand side of
\eqref{bTheta_b} into four terms as
\begin{align}
\int_1^r N[V_b,\Theta](s)\,ds
={}&
-\int_1^r \frac{\alpha-\alpha^E}{\kappa s^{n-1}}\,ds
\nonumber\\
&+
\int_1^r
\frac{\epsilon^3}
     {2\kappa s^{3(n-1)}}
(2v_+ + V_b+\bar\eta+2\eta^E)(V_b+\bar\eta)\,ds
\nonumber\\
&-
\int_1^r
\epsilon^2\mu
\frac{(v_+ +V_b+\bar\eta+\eta^E)
      (V_b+\bar\eta+\eta^E)_s}
     {\kappa s^{2(n-1)}}\,ds
\nonumber\\
&+
\int_1^r
2\epsilon^2\nu(n-1)
\frac{(v_+ +V_b+\bar\eta+\eta^E)^2}
     {\kappa s^{2n-1}}\,ds
\nonumber\\
={}& I_1+I_2+I_3+I_4 .
\label{bNdecomp}
\end{align}
Using Lemma~\ref{1} (ii), the estimate \eqref{4decaybl},
and the uniform boundedness of the solutions, we estimate
$I_1$ and $I_2$ as
\begin{align}
|I_1|
&=
\left|
\int_1^r
\frac{\alpha-\alpha^E}{\kappa s^{n-1}}\,ds
\right|
\le
C\epsilon\mu^\alpha
+
C\epsilon
\sup_{1\le s\le1+\mu^\alpha}|\Theta(s)|,
\nonumber\\
|I_2|
&=
\left|
\int_1^r
\frac{\epsilon^3}{2\kappa s^{3(n-1)}}
(2v_++V_b+\bar\eta+2\eta^E)(V_b+\bar\eta)\,ds
\right|
\nonumber\\
&\le
C\epsilon
\sup_{1\le s\le1+\mu^\alpha}|V_b(s)|+C\epsilon\mu^\alpha
.
\label{bI1I2}
\end{align}
Using \eqref{estVral}, \eqref{decayE2}, \eqref{decayetar},
and \eqref{estVb1}, together with the uniform boundedness of the
solutions, we estimate $I_3$ as
\begin{align}
|I_3|
&=
\left|
\int_1^r
\epsilon^2\mu
\frac{
(v_+ + V_b+\bar\eta+\eta^E)
(V_b+\bar\eta+\eta^E)_s
}{
\kappa s^{2(n-1)}
}\,ds
\right|
\nonumber\\
&\le
C\epsilon\mu^\alpha
+
C\epsilon
\left(
\sup_{1\le s\le1+\mu^\alpha}|V_b(s)|
+
\sup_{1\le s\le1+\mu^\alpha}|\Theta(s)|
\right)
\nonumber\\
&\le
C\epsilon\mu^\alpha
+
C\epsilon
\sup_{1\le s\le1+\mu^\alpha}|\Theta(s)|.
\nonumber\\
\intertext{Moreover, by \eqref{mutonu} and $r-1\le\mu^\alpha$, we obtain}
|I_4|
&=
\left|
\int_1^r
2\epsilon^2\nu(n-1)
\frac{
(v_+ + V_b+\bar\eta+\eta^E)^2
}{
\kappa s^{2n-1}
}\,ds
\right|
\nonumber\\
&\le
C\epsilon^2\nu\mu^\alpha
\le
C\epsilon\mu .
\label{bI3I4}
\end{align}
Combining (\ref{bTheta_b}), (\ref{best1Theta_b}), (\ref{bI1I2}) and (\ref{bI3I4}), we obtain that 
\begin{align*}
\begin{aligned}
\sup_{1\le r\le 1+\mu^{\alpha}}|\Theta(r)|\le C\mu^{\al}+C\e(\sup_{1\le r\le 1+\mu^{\al}}|\Theta(r)|+\sup_{1\le r\le 1+\mu^{\al}}|V_b(r)|).
\end{aligned}
\end{align*}

Taking $\epsilon>0$ sufficiently small, we obtain
\[
\sup_{1\le r\le1+\mu^\alpha}|\Theta(r)|
\le
C\mu^\alpha
+
C\epsilon
\sup_{1\le r\le1+\mu^\alpha}|V_b(r)|.
\]
Using \eqref{estVb1} and taking $\epsilon>0$ sufficiently small once again,
we obtain
\begin{equation}
\label{estThetalb}
\sup_{1\le r\le1+\mu^\alpha}|\Theta(r)|
\le C\mu^\alpha.
\end{equation}
Finally, combining \eqref{estVb1} and \eqref{estThetalb}, we obtain
\[
\sup_{1\le r\le1+\mu^\alpha}|V_b(r)|
+
\sup_{1\le r\le1+\mu^\alpha}|\Theta(r)|
\le C\mu^\alpha,
\]
which proves \eqref{invsidlimitinf2}. \qquad \qquad $\Box$

\medskip

\subsection{Zero-viscosity limit for the inflow problem on
$I=[1,\infty)$}

We now combine the estimates on $I_b$ and $I_\infty$.
From \eqref{invsidlimitinf1}, we have
\[
 \sup_{r\ge1+\mu^\alpha}|V(r)|
 +
 \sup_{r\ge1+\mu^\alpha}|\Theta(r)|
 \le C\mu^\alpha,
\]
while \eqref{invsidlimitinf2} gives
\[
 \sup_{1\le r\le1+\mu^\alpha}|V_b(r)|
 +
 \sup_{1\le r\le1+\mu^\alpha}|\Theta(r)|
 \le C\mu^\alpha.
\]
\noindent
On the other hand, we have
\[
 V_b=V-\bar\eta
 \qquad\text{on } I_\infty.
\]
Since exponential decay is faster than any algebraic rate,
it follows from \eqref{4decaybl} that, for $r\ge1+\mu^\alpha$,
\[
|\bar\eta(r)|
\le
C\exp\left\{-\frac{\nu_0\mu^{\alpha-1}}{\epsilon}\right\}
\le C\mu^\alpha.
\]
Combining the above estimate with \eqref{invsidlimitinf1}
and \eqref{invsidlimitinf2}, we obtain
\begin{equation}
\label{globalVbTheta}
 \sup_{r\ge1}|V_b(r)|
 +
 \sup_{r\ge1}|\Theta(r)|
 \le C\mu^\alpha.
\end{equation}

Recalling the definitions
\[
 V_b=\eta-\eta^E-\bar\eta,
 \qquad
 \Theta=\chi-\chi^E,
\]
we therefore obtain
\begin{equation}
\label{limitonr}
 \sup_{r\ge1}
 |\eta(r)-\eta^E(r)-\bar\eta(r)|
 +
 \sup_{r\ge1}
 |\chi(r)-\chi^E(r)|
 \le C\mu^\alpha. 
\end{equation}

\medskip

Recalling the definition
\[
 \bar\eta(r)
 =
 \eta^B\left(\frac{r-1}{\mu}\right)-\eta^E(1),
\]
the first estimate in \eqref{limitonr} can be rewritten as
\begin{equation}
\label{limitetaB}
 \sup_{r\ge1}
 \left|
 \eta(r)-\eta^E(r)
 -\eta^B\left(\frac{r-1}{\mu}\right)
 +\eta^E(1)
 \right|
 \le C\mu^\alpha .
\end{equation}

We now return to the original variables.
Set
\[
 y:=\frac{r-1}{\mu},
 \qquad
 R(r):=\eta(r)-\eta^E(r)-\bar\eta(r).
\]
Recall that
\[
 \rho(r)=\frac{1}{v_++\eta(r)},\qquad
 \rho^E(r)=\frac{1}{v_++\eta^E(r)},\qquad
 \rho^B(y)=\frac{1}{v_++\eta^B(y)}.
\]

Since the quantities under consideration remain uniformly in a sufficiently small neighborhood of \(v_+\), all the quantities appearing in the denominators below are uniformly bounded away from zero. Moreover,
\[
 v_+ + \eta(r)
 =
 v_+ + \eta^E(r) + \bar\eta(r) + R(r),
\]
and
\[
 v_+ + \eta^B(y)
 =
 v_+ + \eta^E(1) + \bar\eta(r).
\]
Hence,
\begin{align*}
&
\left|
 \rho(r)-\rho^E(r)-\rho^B(y)+\rho^E(1)
\right|
\\
&\le
\left|
 \frac{1}{v_++\eta(r)}
 -
 \frac{1}{v_++\eta^E(r)+\bar\eta(r)}
\right|
\\
&\quad
+
\left|
 \frac{1}{v_++\eta^E(r)+\bar\eta(r)}
 -
 \frac{1}{v_++\eta^E(r)}
 -
 \frac{1}{v_++\eta^E(1)+\bar\eta(r)}
 +
 \frac{1}{v_++\eta^E(1)}
\right|.
\end{align*}
The first term on the right-hand side is bounded by
\[
 C|R(r)|.
\]
For the second term, putting
\[
 a:=v_++\eta^E(r),\qquad
 c:=v_++\eta^E(1),\qquad
 b:=\bar\eta(r),
\]
we have
\begin{align*}
&
\left|
 \frac{1}{a+b}-\frac{1}{a}
 -\frac{1}{c+b}+\frac{1}{c}
\right|
\\
&=
|b|
\left|
 \frac{1}{c(c+b)}
 -
 \frac{1}{a(a+b)}
\right|
\le
C|a-c|\,|b|.
\end{align*}
Therefore,
\[
\left|
 \rho(r)-\rho^E(r)-\rho^B(y)+\rho^E(1)
\right|
\le
C|R(r)|
+
C|\eta^E(r)-\eta^E(1)|\,|\bar\eta(r)|.
\]
By \eqref{decayE2} and \eqref{4decaybl},
\[
 |\eta^E(r)-\eta^E(1)|\,|\bar\eta(r)|
 \le
 C(r-1)
 \exp\left\{
 -\frac{\nu_0(r-1)}{\epsilon\mu}
 \right\}
 \le C\epsilon\mu
 \le C\mu^\alpha.
\]
Together with \eqref{limitonr}, this yields
\begin{equation}
\label{limitdensity}
 \sup_{r\ge1}
 \left|
 \rho(r)-\rho^E(r)
 -\rho^B\left(\frac{r-1}{\mu}\right)
 +\rho^E(1)
 \right|
 \le C\mu^\alpha .
\end{equation}

Next, since $\epsilon=u_b/v_b$ for the inflow problem, we have
\[
 u(r)
 =
 \frac{\epsilon(v_++\eta(r))}{r^{n-1}},
 \qquad
 u^E(r)
 =
 \frac{\epsilon(v_++\eta^E(r))}{r^{n-1}},
\]
and
\[
 u^B(y)
 =
 \epsilon(v_++\eta^B(y)),
 \qquad
 u^E(1)
 =
 \epsilon(v_++\eta^E(1)).
\]
Therefore,
\begin{align*}
&
u(r)-u^E(r)-u^B(y)+u^E(1)
\\
&\qquad=
\epsilon
\left\{
 \frac{R(r)}{r^{n-1}}
 +
 \bar\eta(r)
 \left(
 \frac{1}{r^{n-1}}-1
 \right)
\right\}.
\end{align*}
Since
\[
 \left|
 \frac{1}{r^{n-1}}-1
 \right|
 \le C(r-1),
 \qquad r\ge1,
\]
it follows from \eqref{4decaybl} that
\[
 |\bar\eta(r)|
 \left|
 \frac{1}{r^{n-1}}-1
 \right|
 \le
 C(r-1)
 \exp\left\{
 -\frac{\nu_0(r-1)}{\epsilon\mu}
 \right\}
 \le C\mu^\alpha .
\]
Thus, by \eqref{limitonr},
\begin{equation}
\label{limitvelocity}
 \sup_{r\ge1}
 \left|
 u(r)-u^E(r)
 -u^B\left(\frac{r-1}{\mu}\right)
 +u^E(1)
 \right|
 \le C\mu^\alpha .
\end{equation}

Finally, since
\[
 \theta(r)=\theta_++\chi(r),
 \qquad
 \theta^E(r)=\theta_++\chi^E(r),
\]
and the boundary layer temperature satisfies
\[
 \theta^B(y)\equiv\theta_b=\theta^E(1),
\]
the second estimate in \eqref{limitonr} directly gives
\begin{equation}
\label{limittemperature}
 \sup_{r\ge1}
 |\theta(r)-\theta^E(r)|
 \le C\mu^\alpha .
\end{equation}

Combining \eqref{limitdensity}, \eqref{limitvelocity}, and
\eqref{limittemperature}, we obtain the estimates in statement
(III) of Theorem~\ref{bl+E}. This completes the proof.

%

\section*{Acknowledgments}

I am deeply grateful to Professor Akitaka Matsumura
of Osaka University for many helpful discussions
and valuable suggestions concerning the calculations
in this paper.
I also sincerely thank Professor Seiro Omata
of Kanazawa University for helpful discussions
and encouragement at the outset of my research
on spherically symmetric problems.


\bigskip

\end{document}